\Crefname{ALC@unique}{Line}{Lines}
\newcommand{\M}{{\mathcal M}}
\newcommand{\T}{{\mathcal T}}
\newcommand{\R}{\mathbb{R}}
\newcommand{\C}{\mathbb{C}}
\DeclareMathOperator{\diag}{diag}
\DeclareMathOperator{\rank}{rank}
\renewcommand{\imath}{\mathbf{i}}
\renewcommand{\Re}{\mathrm{Re}\,}
\renewcommand{\Im}{\mathrm{Im}\,}
\def\contranssym{\mathsf{H}}
\newcommand{\ct}[1][]{^{{#1}\contranssym}}
\def\matlab{MATLAB}
\numberwithin{equation}{section}
\def\beq{\begin{equation}}
\def\eeq{\end{equation}}
\def\beqs{\begin{equation*}}
\def\eeqs{\end{equation*}}
\def\bseq{\begin{subequations}}
\def\eseq{\end{subequations}}
\def\bigO{\mathcal{O}}
\def\hinf{\mathcal{H}_\infty}
\def\eps{\varepsilon}
\def\Done{\mathcal{D}_1}
\def\Dtwo{\mathcal{D}_2}
\def\lb{_\mathrm{lb}}
\def\ub{_\mathrm{ub}}
\def\opt{_\star}
\def\Xilb{\Xi_\mathrm{lb}}
\def\Xiub{\Xi_\mathrm{ub}}
\def\eiw{\eul^{\imath \omega}}
\def\eiww{\eul^{2 \imath \omega}}
\newcommand{\pderiv}[3][]{
	\ifthenelse{\isempty{#1}}{
		\tfrac{\partial #2}{\partial #3}
		}{
		\tfrac{\partial^{#1} #2}{\partial #3^{#1}}
		}
}
\def\fro{_\mathsf{F}}
\newcommand{\dw}[1][]{\pderiv[#1]{}{\omega}}
\newcommand{\dxi}[1][]{\pderiv[#1]{}{\xi}}
\def\Tc{\T_\xi(\imath \omega)}
\def\Td{\T_\xi(\eiw)}
\def\Pc{\Phi_\xi(\imath \omega)}
\def\dxiTd{\pderiv{\Td}{\xi}}
\newcommand{\algnote}[1]{\footnotesize \sc{Note: \it#1 } }
\newcounter{subroutine}
\newcommand{\eg}{e.g.,\ }
\newcommand{\ie}{i.e.,\ }
\newcommand{\eul}{\mathrm{e}}
\newcommand{\Hn}{ {\mathbb{H}_n} }   
\numberwithin{equation}{section}
\title{Root-max Problems, Hybrid Expansion-Contraction, and Quadratically Convergent Optimization \\of Passive Systems}
\author{Tim Mitchell\thanks{
Max Planck Institute for Dynamics of Complex Technical Systems, Magdeburg, 39106 Germany, 
\href{mailto:mitchell@mpi-magdeburg.mpg.de}{\texttt{mitchell@mpi-magdeburg.mpg.de}}, ORCID: 0000-0002-8426-0242.}
\and
Paul Van Dooren\thanks{
Department of Mathematical Engineering, Universit\'e catholique de Louvain, Louvain-La-Neuve, Belgium.
\href{mailto:paul.vandooren@uclouvain.be}{\texttt{paul.vandooren@uclouvain.be}}, ORCID: 0000-0002-0115-9932.
Visiting the Max Planck Institute of Magdeburg.
} 
}
\date{{\small September 2, 2021\\Revised: May 24, 2022}}
\theoremstyle{plain}
\newtheorem{theorem}{Theorem}[section]
\newtheorem{lemma}[theorem]{Lemma}
\newtheorem{corollary}[theorem]{Corollary}
\newtheorem{assumption}[theorem]{Assumption}
\newtheorem{remark}[theorem]{Remark}
\newtheorem{definition}[theorem]{Definition}
\def\hecscale{0.35}
\def\threescale{0.31}
\def\twoscale{0.35}
\begin{document}
\maketitle

\begin{abstract}
We present quadratically convergent algorithms to compute the extremal value of a 
real parameter for which a given rational transfer function of a linear time-invariant 
system is passive. This problem is formulated for both continuous-time and discrete-time
systems and is linked to the problem of finding a realization of a rational transfer function 
such that its passivity radius is maximized. Our new methods make use of the 
Hybrid Expansion-Contraction algorithm, which we extend and generalize
to the setting of what we call root-max problems.
\end{abstract}

\noindent
\textbf{Keywords:} 
positive realness, passivity, robustness, rational transfer functions \\
\\
\textbf{MSC (2020):} 
93D09, 93C05, 49M15, 37J25 


\section{Introduction}
 \label{sec:intro} 

Robustness measures play an important role in systems and control. They provide margins for the perturbations that one can allow on a given 
nominal dynamical system such that the perturbed system still performs as desired. A classical example of such a measure is the so-called 
\emph{distance to instability}~\cite{Van85a}, 
which measures how much one can perturb a stable matrix before destabilization is a possibility. 
A generalization of this is the \emph{complex stability radius} (better known by its reciprocal, the $\hinf$ norm),
which measures how much (complex-valued) uncertainty 
in a dynamical system with input and output can be tolerated before stability is no longer guaranteed \cite{ZhoDG96,HinP05}.
Meanwhile, the \emph{real structured stability radius} and \mbox{$\mu$-value} further restrict the uncertainty 
to be real-valued or structured in a particular sense~\cite{HinP90,HinP90a}.
Such measures are often the subject of optimization in robust control, since it is natural to desire
that  the robustness of models to uncertainty/perturbation be maximized.
Furthermore, in the area of model order reduction, the $\hinf$ norm is one of the main indicators of 
how well a reduced-order surrogate mimics the behavior of a larger 
(and often computationally unwieldy) system~\cite{morGlo84}.
Numerical procedures for computing these robustness measures have been developed in the last few decades and have historically been focused 
on linear time-invariant systems  
described by their generalized state-space model. 

In this paper, we consider a problem that is linked to maximizing the \emph{passivity radius}~\cite{OveV05}, 
which measures how much one can can perturb a passive system before it may lose passivity.
A continuous-time linear time-invariant system is said to be \emph{passive} if the Hermitian part of its transfer function is nonnegative definite in the closed right 
half-plane; see~\cite{Wil71}. When this transfer function is finite dimensional and is described by a state-space model, those conditions can be rephrased in terms of the state-space model parameters. In this paper, we consider 
a transfer function that is dependent on a real parameter and look for an extremal value of the parameter for which this transfer function is still passive. At this extremal value, the parametric transfer function switches from passive to non-passive.
Computation of this extremal parameter value is important, as it allows one to construct certificates
for the passivity of the parametric passive systems.  
As discussed in \cite{MehV20,MehV20a}, 
these certificates play a crucial role in the solution of two important problems:
(i)~finding a realization of a given passive system with optimal passivity radius and  (ii)~finding 
the closest passive system to a given non-passive system.
The first algorithms to compute this extremal value were recently proposed in~\cite{MehV20} and~\cite{MehV20a}, respectively,
for the continuous- and discrete-time cases,
but no convergence analysis was done nor were the methods tested experimentally.
We address these issues, establishing that these methods have at least a superlinear rate of convergence,
while also demonstrating some numerical issues with them.
Most importantly, we present significantly faster new algorithms with
local quadratic convergence and much smaller constant factors in their work complexities.
Our new methods are also more numerically robust than the earlier techniques
and have variants that are both the first algorithms for large-scale use
and guarantee approximations that are locally optimal in a certain sense.  
Finding a nearby passive system to a non-passive one has also been considered in~\cite{GilS18} and \cite{FazGL20}, 
but it was suggested in \cite{MehV20,MehV20a} 
that techniques like the ones we develop here  could be applied to address that problem as well.

A core part of our new methods (from which they derive their quadratic convergence)
is our generalization of the Hybrid Expansion-Contraction (HEC) algorithm.
HEC was first conceived as a way to approximate the $\hinf$ norm of large-scale systems~\cite{Mit14,MitO16}
and was subsequently extended to approximating the real structured stability radius~\cite{GugGMetal17}.
However, HEC and its convergence properties have only been described for these two specific settings,
while the structure of our problem of interest here is quite different.
Unlike the $\hinf$ norm, which is computed by obtaining a global maximizer of a function in one real variable,
the extremal value we consider here for the optimization of passive systems 
is computed by iterating over two real variables.
Consequently, another contribution of this paper is to connect these seemingly disparate things, namely, 
by (i) identifying that all of these problems are actually specific instances 
of what we call \emph{root-max problems} (or equivalently \emph{root-min problems}) 
and (ii) generalizing HEC and its convergence results
to this broad new class.
Besides enabling our new methods here, we hope that our generalization of HEC 
will both increase awareness for identifying root-max and root-min problems and ease facilitation of new HEC-based methods.

The paper is organized as follows.  We first establish notation and preliminary material in~\cref{sec:background}.
Then, in~\cref{sec:hec}, we introduce root-max problems and generalize HEC and its convergence results 
to this problem class.
In~\cref{sec:cont}, we describe the continuous-time version of our passivity radius problem 
and our corresponding new algorithm to solve it, while the discrete-time case is handled in~\cref{sec:disc}.
Numerical experiments and concluding remarks are, respectively, 
given in~\cref{sec:experiments} and~\cref{sec:conclusion}.

\section{Preliminaries}
\label{sec:background}
We begin with notation.  The set of Hermitian matrices in~$\C^{n \times n}$ is denoted by~$\Hn$, 
with $A\succ 0$ ($A\succeq 0$) additionally signifying that $A \in \Hn$ is positive (semi-)definite.
$\Lambda(A)$ denotes the spectrum of a matrix $A$ and, when $A$ is Hermitian, we additionally use 
the shorthand~$\lambda_{\min}(A)$ to denote its smallest eigenvalue.
For any square matrix $A$, $\alpha(A) \coloneqq \max \{ \Re \lambda : \lambda \in \Lambda(A) \}$ 
and $\rho(A) \coloneqq \max \{ | \lambda | : \lambda \in \Lambda(A) \}$ 
respectively denote the \emph{spectral abscissa} and \emph{spectral radius} of $A$;
note that $A$ is continuous-time (discrete-time) \emph{asymptotically stable} if and only if $\alpha(A) < 0$ ($\rho(A) < 1$).
$\Re (Z)$ and $\Im (Z)$, respectively, denote the real and imaginary parts of a complex matrix $Z$, 
while the (conjugate) transpose of a vector or matrix $V$ is denoted by $V^{\mathsf{T}}$ ($V^{\mathsf{H}}$).
We use $I_n$ for the $n \times n$ identity matrix.

The models that we consider here are given by their standard state-space form, which means 
that their associated transfer functions are proper (\ie bounded at infinity).
In the continuous-time setting, the transfer function arises from the \emph{Laplace transform} of the  system
\begin{equation} \label{statespace_c}
\begin{array}{rcl} \dot x(t) & = & Ax(t) + B u(t),\ x(0)=0,\\
y(t)&=& Cx(t)+Du(t),
\end{array}
\end{equation}
where $A\in \C^{n \times n}$,   $B\in \C^{n \times m}$, $C\in \C^{p \times n}$, $D\in \C^{p \times m}$,
 and  $x(\cdot)$, $u(\cdot)$,  $y(\cdot)$  are time-dependent vector-valued functions denoting, respectively, the \emph{state},  \emph{input},
and \emph{output} of the system.  In the discrete-time setting, the transfer function arises from the \emph{z-transform} applied to 
\begin{equation} \label{statespace_d}
\begin{array}{rcl} x_{k+1} & = & Ax_k + B u_k,\ x_0=0,\\
y_k&=& Cx_k+Du_k,
\end{array}
\end{equation}
where now $x_k$,  $u_k$,  and  $y_k$  are vector-valued sequences denoting, 
respectively, the \emph{state}, \emph{input}, and \emph{output} of the system.  
In both cases, we denote these systems by four-tuples of matrices $\M \coloneqq \left\{A,B,C,D\right\}$ and
their associated rational matrices 
\beq
	\label{transfer}
	\T(\lambda) \coloneqq C(\lambda I_n-A)^{-1}B + D
	\quad \text{and} \quad
	\T^\mathsf{H}(\lambda) \coloneqq B^\mathsf{H}(\lambda I_n-A^\mathsf{H})^{-1}C^\mathsf{H}+D^\mathsf{H}
\eeq
are, respectively, the associated \emph{transfer function} and \emph{para-conjugate transfer function},
where the variable $\lambda$ stands for the Laplace variable $s$ in the continuous-time setting and the delay operator $z$ 
in the discrete-time case.

We restrict ourselves to system models which are \emph{minimal}, \ie the pair $(A,B)$ is \emph{controllable} 
(for all~$\lambda\in \mathbb C$, 
$\rank \begin{bmatrix} \lambda I_n-A & B \end{bmatrix} =n$), and the pair $(A,C)$ is \emph{observable} (\ie $(A^\mathsf{H},C^\mathsf{H})$ is controllable). 
If the model is not minimal, one can always construct a minimal realization by 
removing the uncontrollable and unobservable parts, which can be done in a backward stable manner~\cite{Van81b}.
The conditions for passivity can then be expressed in terms of a linear matrix inequality involving the matrices of the system model~$\M$. 
The \emph{passivity margin} of the system
model~$\M$ will then be shown to depend on the extremal value 
of a real parameter~$\xi$ for which a particular parametric system model~$\M_\xi$ loses its passivity property.

For rates of convergence, we use the notion of Q-quadratic and Q-superlinear convergence, 
where “Q” stands for “quotient”; see \cite[p.~619]{NocW99} for more details.

\section{Root-max problems and Hybrid Expansion-Contraction}
\label{sec:hec}
Let $\Done \subseteq \R$ be connected, $\Dtwo \subset \R^{N}$ be compact, 
$g: \Done \times \Dtwo \to \R$ be a continuous function,
and~$g_x : \Done \to \R$ and~$g_\eps : \Dtwo \to \R$ be the following 
restrictions of $g$:
\bseq
	\label{eq:g_funs}
	\begin{alignat}{2}
		g_x(\eps) &\coloneqq g(\eps, x), \qquad && \text{where $x \in \Dtwo$ is fixed}, \\
		g_\eps(x) &\coloneqq g(\eps, x), \qquad && \text{where $\eps \in \Done$ is fixed}.
	\end{alignat}
\eseq
Consider the following root-finding problem, which we call a \emph{root-max problem}:
\beq
	\label{eq:root_max}
	\text{Determine an}~\eps \in \Done : \qquad
	f(\eps) \coloneqq \max_{x \in \mathcal{D}_2} g(\eps, x) 
	= \max_{x \in \mathcal{D}_2} g_\eps(x)
	= 0,
\eeq
where we assume that the functions $g_\eps$ are bounded above for all $\eps \in \Done$.
Suppose that there exists \mbox{$\eps\lb, \eps_0 \in \Done$} with $\eps\lb < \eps_0$ and
either \mbox{$f(\eps\lb) < 0 \leq f(\eps_0)$} or \mbox{$f(\eps\lb) > 0 \geq f(\eps_0)$} holding.
Then, by continuity of~$g$, it is clear that \eqref{eq:root_max} has at least 
one root~\mbox{$\eps\opt \in (\eps\lb,\eps_0] \subseteq \Done$}
such that $f(\eps\opt) = 0$.
Of course, if $f(\eps_0) = 0$ holds, then we can take $\eps\opt = \eps_0$.
For our purposes in this section, it is convenient to assume 
the convention that $f(\eps\lb) < 0$ and~$0 \leq f(\eps_0)$ hold, but 
note that each of these inequalities can be modified to be (non-)strict
or reversed, \eg $f(\eps\lb) \geq 0 > f(\eps_0)$, as desired for a specific setting.

Many well-known distance measures can be written in the form of the root-max problem given by \eqref{eq:root_max},
or equivalently, as a \emph{root-min problem}, where the $\max$ functions in \eqref{eq:root_max} are switched
to $\min$ functions and $g_\eps$ must then be bounded below for any fixed $\eps \in \Done$.
For example, the distance to instability and the real stability radius can be naturally 
expressed as root-max problems.  
As we explain later, our particular problem of interest, the optimization of passive systems, 
also falls in this problem class, although we find it more natural to use the root-min form for that context.

In this section, we show that the Hybrid Expansion-Contraction (HEC) algorithm of~\cite{Mit14,MitO16},
which was originally conceived as a method for approximating the $\hinf$ norm, 
actually generalizes to address the class of root-max and root-min problems 
that we have just defined here.  
We use the root-max form to generalize HEC and its associated convergence properties,
since this maintains consistency with its initial usage as well as its name itself: Hybrid Expansion-Contraction.

\begin{assumption}
We assume that the function $g$ is continuously differentiable.
\label{asm:g_c1}
\end{assumption}

We use \cref{asm:g_c1} to keep this section from becoming significantly more technical.
In~\cref{rem:nonsmooth}, we discuss how the convergence of HEC is not critically reliant upon this smoothness condition
and how it can be weakened.  

\begin{assumption}
For any $\eps \in \Done$, we assume that we can obtain local maximizers or stationary points of $g_\eps$ and do so exactly,
\ie the norm of the gradient is zero,
but finding a global maximizer of $g_\eps$ cannot be guaranteed.  
In other words, we cannot be guaranteed to obtain the value of $f(\eps)$ in practice, but we are 
guaranteed to obtain (generally locally optimal) lower bounds to it, which may or may not agree with the value of $f(\eps)$. 
\label{asm:g_stat}
\end{assumption}

\begin{remark}
Some comments on~\cref{asm:g_stat} are in order.
Guaranteeing convergence to global maximizers of general functions, \eg nonconcave ones,
is typically not possible, and even in special cases where it is, techniques to do so are often prohibitively expensive.
If one could reliably compute the value of~$f(\eps)$, 
then~\eqref{eq:root_max} could simply be solved using standard root-finding techniques with bracketing, \eg regula falsi.
In contrast, as we elucidate below, 
under the much milder and more realistic assumptions given in~\cref{asm:g_stat}, 
standard root-finding techniques can break down
when trying to solve instances of~\eqref{eq:root_max}, precisely because obtaining the value of~$f(\eps)$ is not guaranteed.
Having an algorithm for~\eqref{eq:root_max} that performs robustly and predictably under~\cref{asm:g_stat}
motivated the development of HEC, although it was not until this paper
that HEC was actually considered from this general perspective.
\label{rem:mismatch}
\end{remark}

HEC was borne out of the specific desire for a faster and more reliable alternative to earlier state-of-the-art scalable methods
for approximating the $\hinf$ norm.
Prior to its introduction, Guglielmi, G\"urb\"uzbalaban, and Overton~\cite{GugGO13} had proposed an
$\hinf$-norm approximation method that attempts to compute the unique root of a particular 
monotonically increasing function in one real variable; the reciprocal of this root is the $\hinf$ norm. 
The main wrinkle here is that with existing techniques, 
evaluating the function to guaranteed accuracy would actually be more expensive than computing the $\hinf$ norm directly, 
but crucially, Guglielmi et al.\@ devised a powerful, scalable subroutine that efficiently computes a lower bound to the function value,
which in practice, also often coincides with the true function value.  Hence, they proposed using their fast subroutine inside 
a Newton-bisection-based outer iteration in order to compute the root of this function, and thus in turn, the $\hinf$ norm.
However, per~\cite[Acknowledgements]{GugGO13}, as first observed by Mitchell, the first author here,
this root-finding approach of~Guglielmi et al.\@ actually can sometimes break down, converging to arbitrarily bad approximations
to the $\hinf$ norm that are not even locally optimal.
 Moreover, when this breakdown happens,
their algorithm's typically fast local rate of convergence also degrades to linear.

In line with~\cref{rem:mismatch}, these breakdowns arise 
precisely because the function whose root is sought is not guaranteed to be computed accurately,
and so using a standard root-finding method as the outer iteration is fraught with danger.
The crux of the matter in the method of Guglielmi et al.\@ is that the sometimes inaccurate estimates 
for the function values can cause the bracket containing the root to
be incorrectly and irrevocably updated, which in~\cite{Mit14,MitO16} was coined a \emph{bound mismatch error};
for a full description of this how comes about, see~\cite[Section~3.2]{MitO16}.
HEC overcomes this critical problem by instead employing one-sided convergence,
which was motivated by a key observation~\cite[p.~994]{MitO16}: 
when only lower bounds to the (recall monotonically increasing) function are guaranteed, if the computed estimate is negative, 
the direction of the unique root cannot be determined, but if the computed estimate is positive, one
does know that the root lies to the left.  
Thus, the HEC algorithm was designed to compute a decreasing sequence of upper bounds in order
to converge to a root.

In applications of HEC explored so far~\cite{MitO16,GugGMetal17} where~\cref{asm:g_stat} holds,
HEC often converges to roots of specific instances of~\eqref{eq:root_max},
which is guaranteed if~$f(\eps)$ can always be computed accurately.
However, under~\cref{asm:g_stat} as stated,
HEC instead guarantees convergence to what we call a \emph{pseudoroot} of~\eqref{eq:root_max}, 
which is either an actual root of~\eqref{eq:root_max}, or, roughly speaking, a locally optimal approximation to one;
we will define this notion exactly momentarily.
In~\cite{Mit14,MitO16}, no name was given for this concept as it was not considered
in that context.  

Although HEC uses bracketing,
the facts that it (i) only ever updates its upper bound 
and (ii) deliberately uses one-sided convergence to roots
make HEC strikingly different to other root-finding methods.
But, as the upcoming theoretical results will clarify,
HEC's one-sided convergence does not come at the cost of sacrificing fast local convergence;
under mild smoothness assumptions, the local rate of convergence of HEC is at least quadratic.
In~\cite[p.~997]{MitO16}, HEC is described as an ``adaptively positively or negatively damped Newton method",
which means that HEC sometimes takes steps smaller or larger, 
respectively, than the regular Newton step.
It may seem unintuitive, but the local rate of convergence of HEC does remain at least quadratic 
even when HEC takes smaller steps (positive damping).
Meanwhile, the ability of HEC to take larger steps (negative damping) is a good thing.
Suppose that HEC converges to a root $\tilde \eps$.\footnote{When $f$ has
multiple roots, negative damping may bias HEC towards finding roots closer to $\eps\lb$ before its local convergence behavior sets in.}  Due to the one-sided convergence,
a step larger than the Newton one can never overshoot $\tilde \eps$, and so larger steps will always 
make more progress towards $\tilde \eps$ than the corresponding Newton steps would.
Thus, when negative damping is frequent, HEC can be faster than Newton's method.
For illustrations of positive and negative damping, please see~\cite[Fig.~4]{MitO16}.

\subsection{The generalized HEC algorithm and its convergence properties}
\label{sec:hec_generic}
Having put HEC and its properties in the context of its own history and root finding,  
we now set to the task of precisely describing how HEC actually works and
generalizing it to root-max problems~\eqref{eq:root_max}.
The convergence properties that we establish for our generalized version of HEC 
are, at a very high level, proved using similar arguments to those given by the first author here and Overton in~\cite[Section~4]{MitO16}
for the specific case of approximating the $\hinf$ norm.
However, our generalization here makes these convergence 
results far more accessible in terms of being much easier to both understand and apply far more broadly.

\begin{definition}
Given $\tilde \eps \in \Done$ and $\tilde x \in \Dtwo$, 
$(\tilde \eps, \tilde x)$ is a \emph{pseudoroot of \eqref{eq:root_max}}
if~$g(\tilde \eps, \tilde x) = 0$ and~$\tilde x$ is a stationary point of $g_{\tilde\eps}$.
\end{definition}

Defining pseudoroot in terms of a stationary point of $g_{\tilde\eps}$,
as opposed to a local maximizer, which might seem
more intuitive, is intentional.  The reason for this is subtle and requires more context to explain,
so we defer this discussion to~\cref{rem:stat_pts}.
As we see in the following simple result (whose proof we omit as it is elementary), 
pseudoroots are intimately related with roots of~\eqref{eq:root_max}.

\begin{lemma}
Let $\tilde \eps \in \Done$, $\tilde x \in \Dtwo$, 
and $(\tilde \eps, \tilde x)$ be a pseudoroot of~\eqref{eq:root_max}.
Then  
$\tilde \eps$ is a root of~\eqref{eq:root_max} if and only if $\tilde x$
is a global maximizer of $g_{\tilde\eps}$.
Otherwise, $0 < f(\tilde \eps)$.
\end{lemma}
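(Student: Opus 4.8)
The plan is to reduce everything to a single observation about the value $g_{\tilde\eps}(\tilde x)$. Since $\Dtwo$ is compact and $g$ (hence $g_{\tilde\eps}$) is continuous, the maximum defining $f(\tilde\eps) = \max_{x \in \Dtwo} g_{\tilde\eps}(x)$ is attained; in particular, because $\tilde x \in \Dtwo$, we always have $f(\tilde\eps) \geq g_{\tilde\eps}(\tilde x)$. The pseudoroot hypothesis supplies $g(\tilde\eps, \tilde x) = g_{\tilde\eps}(\tilde x) = 0$, so this inequality reads $f(\tilde\eps) \geq 0$ and holds unconditionally. It is worth noting that only the condition $g(\tilde\eps, \tilde x) = 0$ from the definition of a pseudoroot is used here; the stationarity of $\tilde x$ plays no role in this particular result.

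I would then dispatch the biconditional by comparing $f(\tilde\eps)$ with $0$. For the forward direction, assume $\tilde\eps$ is a root, \ie $f(\tilde\eps) = 0$. Then $g_{\tilde\eps}(\tilde x) = 0 = f(\tilde\eps)$, so $\tilde x$ attains the maximum of $g_{\tilde\eps}$ over $\Dtwo$ and is therefore a global maximizer. For the converse, assume $\tilde x$ is a global maximizer of $g_{\tilde\eps}$; then $f(\tilde\eps) = g_{\tilde\eps}(\tilde x) = 0$ by the pseudoroot condition, so $\tilde\eps$ is a root of~\eqref{eq:root_max}.

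Finally, the ``otherwise'' clause follows as the contrapositive of the biconditional together with the unconditional bound $f(\tilde\eps) \geq 0$. If $\tilde\eps$ is not a root, then by the equivalence just proved $\tilde x$ is not a global maximizer, so $\tilde x$ fails to attain the maximum and hence $0 = g_{\tilde\eps}(\tilde x) < f(\tilde\eps)$, as claimed. I do not expect any genuine obstacle here, since the argument is purely definitional. The only point that warrants care is the appeal to compactness of $\Dtwo$ (and continuity of $g$), which guarantees that the supremum is a genuine maximum and hence that ``$\tilde x$ is a global maximizer'' is equivalent to the equality $g_{\tilde\eps}(\tilde x) = f(\tilde\eps)$ on which the whole argument turns.
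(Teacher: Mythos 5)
Your proof is correct, and it is precisely the elementary definitional argument the paper has in mind when it omits the proof as elementary: $f(\tilde\eps) \geq g_{\tilde\eps}(\tilde x) = 0$ always, with equality exactly when $\tilde x$ attains the (compactness-guaranteed) maximum. Your side remark that only $g(\tilde\eps,\tilde x)=0$ is used, and not stationarity, is also accurate.
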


As subroutines, HEC requires both a root-finding method with bracketing and optimization solver,
and we assume these subroutines have the following properties.  

\begin{assumption}
We assume that the root-finding and optimization subroutines used by HEC
are deterministic, \ie they return the same answer for the same initial data,
converge exactly (see also \cref{asm:g_stat}), and the root-finding method uses bracketing to ensure convergence to a root,
while the optimization solver is monotonic, \ie 
it always increases the value of the objective function being maximized at successive iterates
until it reaches a stationary point.
\label{asm:subs}
\end{assumption}

Many root-finding methods use bracketing, while unconstrained optimization solvers are typically monotonic by design.
Most solvers for these problems are also deterministic, and so this set of assumptions is mild.
The remaining assumption that the subroutines converge exactly does not hold in inexact arithmetic,
but this assumption is only used to establish our theoretical results.  
In practice, good implementations of HEC behave as the theory predicts as long as the subroutines 
are reasonably accurate.

\begin{figure}[t]
\centering
\includegraphics[scale=\hecscale,trim=1.0cm 1.75cm 0cm 0cm,clip]{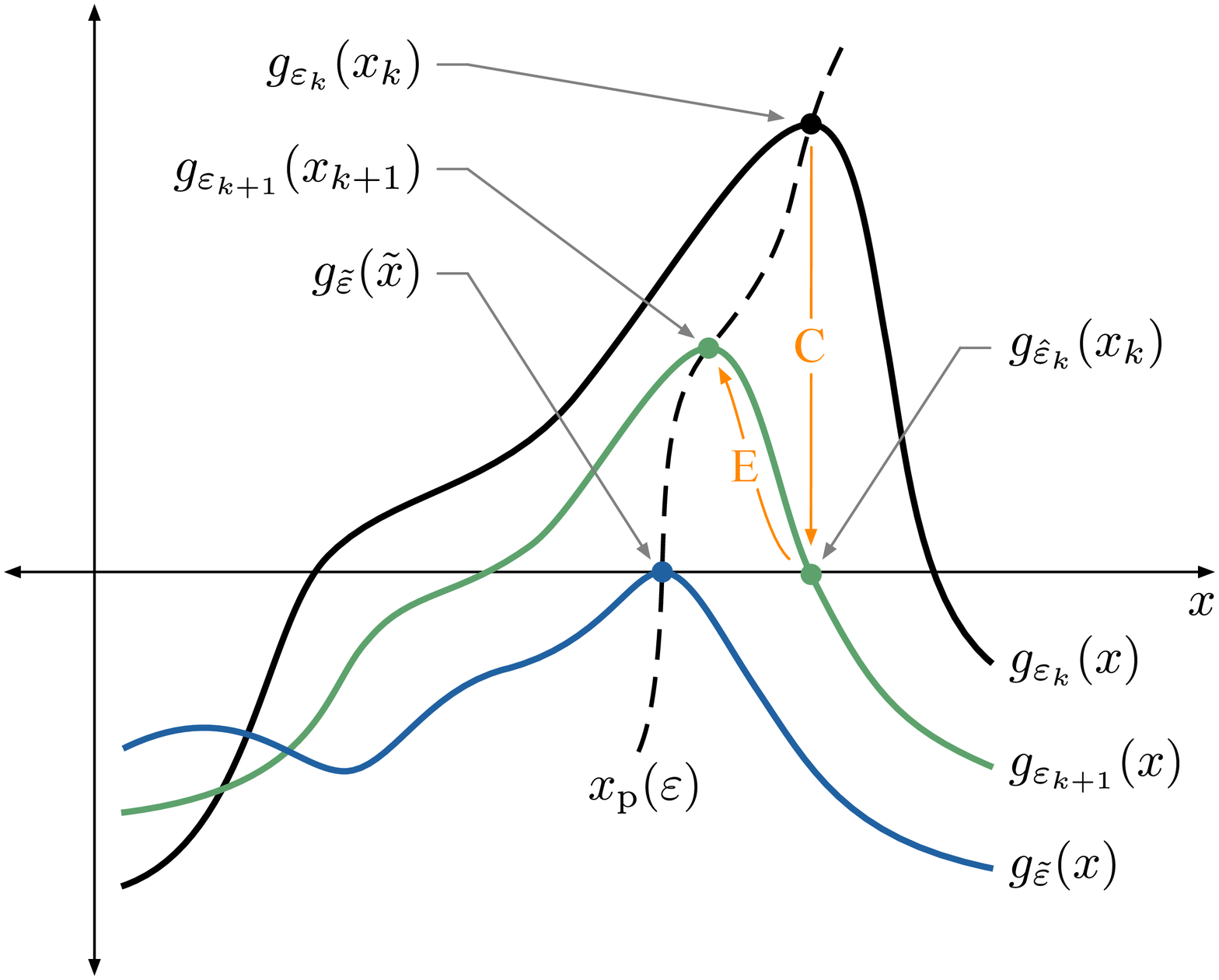}
\caption{Illustration of HEC of converging to a pseudoroot $(\tilde \eps,\tilde x)$ of~\eqref{eq:root_max},
where $\tilde \eps$ is also a root of~\eqref{eq:root_max}.
At iteration $k$, HEC has found the global maximizer $x_k$ of~$g_{\eps_k}$.
The contraction phase, denoted by `C' above, computes $\eps_{k+1} = \hat \eps_k \leq \eps_k$,
which is a root of~$g_{x_k}$ in the interval $(\eps\lb,\eps_k]$.
The subsequent expansion phase, denoted by `E' above, then computes the global maximizer~$x_{k+1}$
of~$g_{\eps_{k+1}}$.
The dashed curve~$x_\mathrm{p} : \Done \to \Dtwo$ denotes a continuous path of maximizers 
of~$g_\eps$ as~$\eps$ is varied, 
where $x_\mathrm{p}(\tilde \eps) = \tilde x$ corresponds to the pseudoroot~$(\tilde \eps,\tilde x)$.
Sufficiently close to~$(\tilde \eps,\tilde x)$, it is typical that HEC only encounters a single path $x_\mathrm{p}$
as depicted here.
}
 \label{fig:hec}
\end{figure}
We now are ready to describe how the (generalized) HEC algorithm works.
As we define the algorithm here, readers may also wish to refer to~\cref{fig:hec},
which illustrates HEC converging to a pseudoroot $(\tilde \eps, \tilde x)$, 
where~$\tilde \eps$ is indeed a root of~$f$ and~$\tilde x$ is a maximizer of~$g_{\tilde \eps}$.
By construction, HEC generates a monotonically decreasing sequence~$\{\eps_k\} \to \tilde \eps$.
For~\mbox{$x_k \in \Dtwo$} fixed with $g_{x_k}(\eps_k) \geq 0$, first note that we have
\[
	g_{x_k}(\eps\lb) 
	\leq 
	f(\eps\lb) 
	< 0 \leq g_{x_k}(\eps_k) \leq f(\eps_k).
\]
The \emph{one-parameter contraction phase} reduces $\eps_k$ by finding a 
root $\hat \eps_k \in (\eps\lb,\eps_k]$ of~$g_{x_k}$.
By the inequalities above, there must be at least one root in this bracket.
If~\mbox{$g_{x_k}(\eps_k) = 0$}, the contraction phase simply returns $\hat \eps_k = \eps_k$.
Otherwise, bisection can be used to find a root in $(\eps\lb,\eps_k)$.
But  if $g_{x_k}$ is sufficiently smooth at~$\hat \eps_k$, then, 
\eg Newton's or Halley's method could find it with far fewer iterations.  
Of course, these faster root-finding methods are not guaranteed to converge and $g_{x_k}$ may not 
be always be sufficiently smooth, which is why, per~\cref{asm:subs}, it is important to combine both approaches, 
\eg Newton-bisection.
Bracketing and bisection ensure convergence to a root of $g_{x_k}$,
but the ability to also take Newton (or Halley) steps, assuming that they fall inside the current bracket, 
can yield quadratic (or cubic) convergence when sufficient smoothness holds.
Subsequently, for~$\hat \eps_k \in \Done$ now fixed and $g_{\hat \eps_k}(x_k) = 0$,
the \emph{multi-parameter expansion phase} attempts to maximize $g_{\hat \eps_k}$ by initializing 
an optimization solver at~$x_k$.  
If optimization returns~\mbox{$x_{k+1} = x_k$}, there is nothing to do, 
\eg when~$x_k$ is a stationary point of $g_{\hat \eps_k}$.
Otherwise, since the optimization solver is monotonic by~\cref{asm:subs},
the solver must converge to a stationary point~$x_{k+1}$ of~$g_{\hat \eps_k}$ (typically a maximizer) 
such that $g_{\hat \eps_k}(x_{k+1}) > 0$.
Beyond the conditions in~\cref{asm:subs}, HEC does not specify a specific optimization method,
though fast methods should be used when possible.
This process of alternating between root finding (contraction) and 
optimization (expansion) is repeated in a loop and it converges to a pseudoroot of~\eqref{eq:root_max}.
Pseudocode for HEC is given~\cref{alg:hec}.

\begin{algfloat}[t]
\begin{algorithm}[H]
\caption{Hybrid Expansion-Contraction (HEC)}
\label{alg:hec}
\begin{algorithmic}[1]
	\setcounter{ALC@unique}{0}
	\REQUIRE{ 
		$\eps\lb, \eps_0 \in \Done$ and $x_0 \in \Dtwo$ 
		such that $f(\eps\lb) < 0 \leq g(\eps_0, x_0) \leq f(\eps_0)$ \\ 
	}
	\ENSURE{ 
		$(\tilde \eps, \tilde x)$ such that $g(\tilde \eps, \tilde x) = 0$ and $\tilde x$ is a stationary point of $g_{\tilde \eps}$
		\\ \quad
	}
	
	\FOR { $k = 0,1,2,\ldots$ }
		\STATE \COMMENT{Contraction: deterministic root-finding method initialized at $\eps_k$}
		\STATE $\hat \eps_k \gets$ a root of $g_{x_k}$ with $\hat \eps_k \in (\eps\lb,\eps_k]$
		\IF { $x_k$ is a stationary point of $g_{\hat \eps_{k}}$ }
			\label{line:hec_stat_pt}
			\STATE $(\tilde \eps, \tilde x) \gets (\hat \eps_k, x_k)$
			\RETURN
		\ENDIF
		\STATE \COMMENT{Expansion: deterministic optimization method initialized at $x_k$}
		\STATE $x_{k+1} \gets$ a stationary point of $g_{\hat \eps_k}$
			 with $g_{\hat \eps_k}(x_{k+1}) > g_{\hat \eps_k}(x_k)$
 		\STATE $\eps_{k+1} \gets \hat\eps_k$
	\ENDFOR 
\end{algorithmic}
\end{algorithm}
\vspace{-0.4cm}
\algnote{
If the conditional statement in~\cref{line:hec_stat_pt} is never satisfied, 
then by~\cref{thm:hec_converge}, 
HEC produces two infinite sequences $\{\eps_k\}$ and $\{x_k\}$, 
with the former converging to $\tilde \eps$ and the latter having at least one cluster point,
any of which we denote as $\tilde x$.
Contraction must use a root-finding method
with bracketing, e.g., Newton-bisection, 
to ensure a root of~$g_{x_k}$ in the given bracket $(\eps\lb,\eps_k]$ is found.
The inequality in the expansion phase is guaranteed by simply 
initializing optimization at $x_k$ and using a monotonic optimization solver.
Finally, HEC can begin with either an expansion or contraction phase,
and which is more convenient may depend on the particular application.
}
\end{algfloat}

\begin{theorem}[Convergence of HEC]
\label{thm:hec_converge}
Under~\cref{asm:g_c1,asm:subs} and given valid initial data, \cref{alg:hec} generates the sequences $\{\eps_k\}$ 
converging monotonically to a limit $\tilde \eps$ and~$\{x_k\}$ with at least one cluster point,
where $(\tilde \eps, \tilde x)$ is a pseudoroot of \eqref{eq:root_max}.
\end{theorem}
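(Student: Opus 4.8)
The plan is to split into two cases according to whether \cref{alg:hec} terminates at \cref{line:hec_stat_pt} after finitely many iterations or runs indefinitely. In the finite case the result is immediate: termination returns $(\hat\eps_k, x_k)$ where, by the contraction step, $g(\hat\eps_k,x_k)=0$ and, by the triggering condition, $x_k$ is a stationary point of $g_{\hat\eps_k}$, so $(\hat\eps_k,x_k)$ is a pseudoroot by definition. The substantive case is the infinite one, and here I would first establish the loop invariant $g(\eps_k,x_k)\ge 0$ (in fact $g(\eps_k,x_k)>0$ for $k\ge 1$) by induction: it holds at $k=0$ by the valid initial data, and if it holds at step $k$ then, since $g_{x_k}(\eps\lb)\le f(\eps\lb)<0\le g_{x_k}(\eps_k)$, continuity (the intermediate value theorem) guarantees a root $\hat\eps_k\in(\eps\lb,\eps_k]$ of $g_{x_k}$, which the bracketing root-finder of \cref{asm:subs} returns; the monotonic optimizer of \cref{asm:subs}, initialized at the (necessarily non-stationary) $x_k$, then produces $x_{k+1}$ with $g(\eps_{k+1},x_{k+1})>g(\eps_{k+1},x_k)=0$, re-establishing the invariant. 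This shows the algorithm is well defined and records, at each step, the contraction identity $g(\eps_{k+1},x_k)=0$ together with the fact that $x_{k+1}$ is a stationary point of $g_{\eps_{k+1}}$.

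Next I would extract the two limits. Since $\hat\eps_k\in(\eps\lb,\eps_k]$ gives $\eps_{k+1}=\hat\eps_k\le\eps_k$, the sequence $\{\eps_k\}$ is nonincreasing and bounded below by $\eps\lb$, hence converges to some $\tilde\eps\in[\eps\lb,\eps_0]$; because $\Done$ is a connected subset of $\R$ containing $\eps\lb$ and $\eps_0$, it is an interval and $\tilde\eps\in\Done$. Compactness of $\Dtwo$ then yields a cluster point $\tilde x\in\Dtwo$ of $\{x_k\}$; fix a subsequence $x_{k_j}\to\tilde x$ with $k_j\ge 1$. The decisive observation is that because the \emph{entire} sequence $\{\eps_k\}$ converges to $\tilde\eps$, both shifted subsequences $\eps_{k_j}$ and $\eps_{k_j+1}$ converge to $\tilde\eps$, which is what lets me transport the two recorded facts to the single point $(\tilde\eps,\tilde x)$.

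The final step carries out these two limits. From the contraction identity $g(\eps_{k_j+1},x_{k_j})=0$, continuity of $g$ and $(\eps_{k_j+1},x_{k_j})\to(\tilde\eps,\tilde x)$ give $g(\tilde\eps,\tilde x)=0$. From the expansion step, $x_{k_j}$ is a stationary point of $g_{\eps_{k_j}}$, so $\nabla g_{\eps_{k_j}}(x_{k_j})=0$; since \cref{asm:g_c1} makes $g$ continuously differentiable, the map $(\eps,x)\mapsto\nabla g_{\eps}(x)$ is continuous, whence $(\eps_{k_j},x_{k_j})\to(\tilde\eps,\tilde x)$ forces $\nabla g_{\tilde\eps}(\tilde x)=0$, i.e.\ $\tilde x$ is a stationary point of $g_{\tilde\eps}$. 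Together these two facts show $(\tilde\eps,\tilde x)$ is a pseudoroot of \eqref{eq:root_max}; and since the argument uses an arbitrary cluster point, it applies to any cluster point of $\{x_k\}$.

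I would expect the main obstacle to be precisely the bookkeeping just highlighted: the ``$g=0$'' information and the ``stationary point'' information are attached to different iterates relative to the loop (the contraction root $\eps_{k+1}$ pairs with $x_k$, whereas the stationarity of $x_k$ is with respect to $g_{\eps_k}$), so one must argue that both can be carried to the same limit pair $(\tilde\eps,\tilde x)$. This is exactly what convergence of the full $\eps$-sequence and the joint continuity afforded by \cref{asm:g_c1} deliver; the role of the $C^1$ assumption is to make the stationarity condition stable under limits, and it is this step --- rather than the monotone convergence of $\{\eps_k\}$, which is routine --- that carries the real content.
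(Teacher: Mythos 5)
Your proof is correct and takes essentially the same route as the paper's: monotone convergence of $\{\eps_k\}$ to $\tilde\eps$, compactness of $\Dtwo$ to extract a cluster point, the contraction identity $g_{x_k}(\hat\eps_k)=0$ together with continuity of $g$ to obtain $g(\tilde\eps,\tilde x)=0$, and continuity of the gradient (from \cref{asm:g_c1}) to transport stationarity to the limit. The only difference is presentational --- the paper proves the stronger fact that the full sequence $g(\eps_k,x_k)\to 0$ via a contradiction argument, whereas you pass directly to the limit along the cluster subsequence using $\eps_{k_j},\eps_{k_j+1}\to\tilde\eps$, which suffices for the theorem as stated; your explicit loop invariant and handling of the finite-termination case are, if anything, slightly more careful than the paper's one-line dismissals of those points.
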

\begin{proof}
We assume that conditional statement in~\cref{line:hec_stat_pt} of~\cref{alg:hec} is never met,
as otherwise the theorem clearly holds.  
Since the algorithm ensures that $\{\eps_k\}$ is a monotonically decreasing sequence that is bounded below by $\eps\lb$,
it must converge to a limit~$\tilde \eps$, 
and so it follows that $\lim_{k\to\infty} \hat\eps_k = \lim_{k\to\infty} \eps_{k+1} = \tilde \eps$ as well.
By construction, for all~$k \geq 1$, 
the algorithm also ensures that $g_{\eps_k}(x_k) > 0$ with $x_k$ being a stationary point of~$g_{\eps_k}$.
Now suppose that $\lim_{k\to\infty}g(\eps_k, x_k) \neq 0$.
Then there is a subsequence~$\{x_{k_i}\}$ for which $\{g(\eps_{k_i}, x_{k_i})\}$ is bounded below by some~$\gamma > 0$.
Thus, by taking a further subsequence if necessary,
we may assume without loss of generality that $\{x_{k_i}\}$ converges to a limit $\tilde x$.
By continuity of $g$, it follows that~$\{g(\eps_{k_i}, x_{k_i})\}$ converges to~$g(\tilde \eps, \tilde x) \geq \gamma$.
However, since~$\{\hat \eps_{k_i}\}$ also converges to $\hat \eps$, then $\{g(\hat \eps_{k_i}, x_{k_i})\}$ must converge to the same limit $g(\tilde \eps, \tilde x)$,
which is a contradiction, since by definition of the contraction step, $g_{x_{k_i}}(\hat\eps_{k_i}) = 0$ must hold for all $i$.
Thus, $\lim_{k\to\infty} g(\eps_k, x_k) = 0$ must hold.
Although~$\{x_k\}$ may not converge, the sequence is bounded since~$\Dtwo$ is a compact subset of $\R^N$,
and so $\{x_k\}$ must have at least one cluster point.
As~$\| \nabla g_{\eps_k} (x_k) \| = 0$ holds for all $k \geq 1$, 
clearly $\| \nabla g_{\tilde \eps} (\tilde x) \| = 0$ also holds,
and so $\tilde x$ is also a stationary point of~$g_{\tilde \eps}$,
hence~$(\tilde \eps, \tilde x)$ is a pseudoroot of~\eqref{eq:root_max}.
\end{proof}

\begin{remark}
\label{rem:stat_pts}
Stationary points of $g_{\hat \eps_k}$ computed in the expansion phases 
will typically be maximizers, and some optimization solvers 
can guarantee convergence to maximizers (under appropriate assumptions).
However, while \cref{thm:hec_converge} guarantees
that HEC converges to a pseudoroot~$(\tilde \eps,\tilde x)$ of \eqref{eq:root_max},
it does not guarantee that $\tilde x$ is a local maximizer of~$g_{\tilde \eps}$,
just that it is a stationary point.
Nevertheless, whenever the expansion phases consistently return local maximizers,
we do observe in practice that~$\tilde x$ is also a local maximizer; see~\cite{MitO16,GugGMetal17}. 
While it seems unlikely that~$\tilde x$ would only be stationary, we do not believe it is impossible;
e.g., it is easy to imagine that the functions~$g_{\eps_k}$ shown 
in~\cref{fig:hec} could instead converge to a function~$g_{\tilde \eps}$
that is constant in an interval about~$\tilde x$.
\end{remark}

\begin{remark}
\label{rem:nonsmooth}
It is only in the last sentence of the proof of~\cref{thm:hec_converge} that~\cref{asm:g_c1}
is used.  However, \cref{thm:hec_converge} can be extended to functions $g_\eps$
that have some nonsmoothness, \eg at maximizers, if one instead 
uses a concept of stationarity that can both handle nonsmooth points and remains
continuous so that the limit argument in the proof still holds.  
\end{remark}

\begin{figure}[t]
\centering
\includegraphics[scale=\hecscale,trim=1.0cm 2.0cm 0cm 0cm,clip]{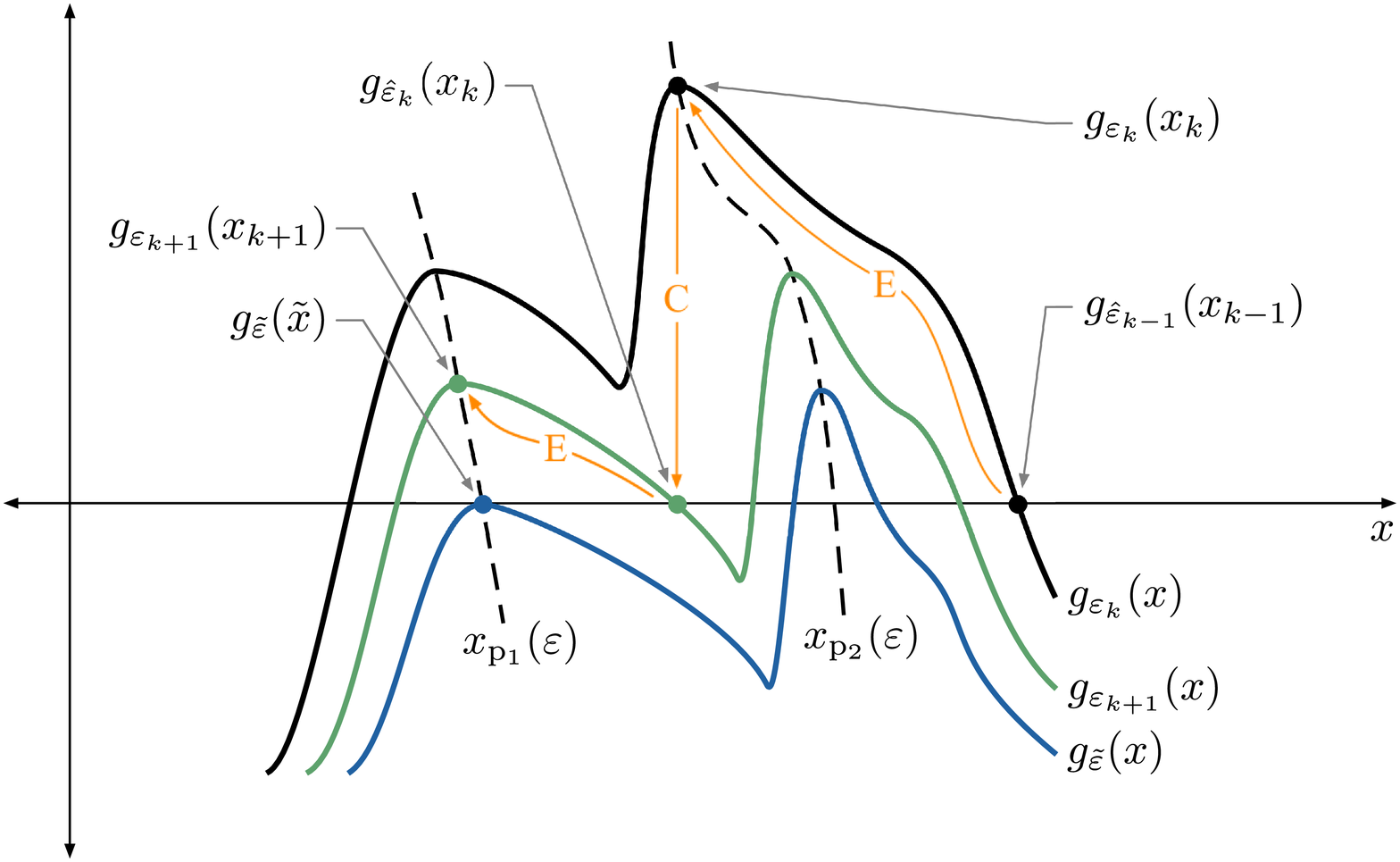}
\caption{Illustration of HEC of encountering two different paths of stationary points,  
$x_{\mathrm{p}_1} : \Done \to \Dtwo$ and  $x_{\mathrm{p}_2} : \Done \to \Dtwo$,
with HEC eventually converging to a pseudoroot~$(\tilde \eps,\tilde x)$ of~\eqref{eq:root_max} 
on path $x_{\mathrm{p}_1}$, but $\tilde \eps$ is not a root of~\eqref{eq:root_max}.
Note that paths of stationary points do not necessarily need to contain a pseudoroot 
(intersect with the $x$-axis), and although HEC may encounter and/or oscillate between multiple such paths
as the algorithm converges,  
this does not affect the convergence result for HEC described by~\cref{thm:hec_converge}.
For more details, see the caption of~\cref{fig:hec}.
}
\label{fig:hec2}
\end{figure}

Although by construction~\cref{alg:hec} produces a monotonically decreasing sequence $\{\eps_k\}$,
note that the sequence $\{g_{\eps_k}(x_k)\}$ produced by the expansion phases
is not necessarily monotonic, even though
it must converge to zero.
For example, if the highest two curves in~\cref{fig:hec} were to instead cross each other to the left and right of the continuous 
path of global maximizers $x_\mathrm{p}$, then $g_{\eps_{k+1}}(x_{k+1}) > g_{\eps_k}(x_k)$ would hold.
Moreover, non-monotonicity of~$\{g_{\eps_k}(x_k)\}$ can also result from HEC encountering multiple such paths of 
stationary points as it progresses.
These paths can consist of global or local maximizers or sometimes even both.
\cref{fig:hec2} shows a depiction where~$x_{\mathrm{p}_1}$ and~$x_{\mathrm{p}_2}$ 
are two separate continuous paths of local maximizers of $g_\eps$  and
HEC encounters both paths, but in this illustration, $\{g_{\eps_k}(x_k)\}$ is monotonically converging to zero.
Again, encountering multiple such paths does not affect the convergence
result of~\cref{thm:hec_converge}.
However, to show that the sequence~$\{\eps_k\}$ generated by HEC converges quadratically to $\tilde \eps$,
it will be simpler to assume that HEC eventually only encounters a single continuous path of local maximizers,
like as is shown in~\cref{fig:hec}.

\begin{theorem}[Quadratic convergence of HEC]
\label{thm:hec_quad}
Suppose that~\cref{asm:g_c1,asm:subs} hold, and so with valid initial data,  
\cref{alg:hec} converges as described in~\cref{thm:hec_converge}.
Additionally suppose that the sequence $\{x_k\}$ only has a single cluster point $\tilde x$,
and $\tilde x$ lies on an open continuous path $x_\mathrm{p} : \Done \to \Dtwo$ of stationary points of~$g_\eps$
as $\eps$ varies with~$\tilde x = x_\mathrm{p}(\tilde \eps)$.
If~$g$ and~$x_\mathrm{p}$ are twice continuously differentiable
at~$(\tilde \eps, \tilde x)$ and $\tilde \eps$, respectively, 
$\tilde x$ is a local maximizer of $g_{\tilde \eps}$, and $g_{\tilde x}^\prime(\tilde \eps) \neq 0$,
then the sequence~$\{\eps_k\}$ converges Q-quadratically to $\tilde \eps$.
\end{theorem}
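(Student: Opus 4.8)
The plan is to collapse the two-variable HEC iteration into a scalar fixed-point iteration $\eps_{k+1} = \Psi(\eps_k)$ and to show that the iteration map $\Psi$ has vanishing derivative at its fixed point $\tilde\eps$; this single fact is exactly the mechanism that upgrades HEC's one-sided convergence to quadratic. First I would use the extra hypotheses (single cluster point $\tilde x$, lying on the single open path $x_\mathrm{p}$ of stationary points) to argue that for all sufficiently large $k$ the iterates track this path, i.e. $x_k = x_\mathrm{p}(\eps_k)$; this is consistent with the expansion phase, whose monotonic optimizer is attracted to the local maximizer $x_\mathrm{p}(\eps_{k+1})$. Introducing the scalar ``value along the path'' function $h(\eps) \coloneqq g(\eps, x_\mathrm{p}(\eps))$, the chain rule gives $h'(\eps) = g_{x_\mathrm{p}(\eps)}'(\eps) + \nabla g_\eps(x_\mathrm{p}(\eps))^{\mathsf{T}} x_\mathrm{p}'(\eps)$, and since $x_\mathrm{p}(\eps)$ is a stationary point of $g_\eps$ the second term vanishes, leaving $h'(\tilde\eps) = g_{\tilde x}'(\tilde\eps) \neq 0$ by hypothesis. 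In particular $\tilde\eps$ is a simple root of $h$.

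Next I would make the contraction step explicit. Since the update satisfies $g(\eps_{k+1}, x_k) = 0$ with $x_k = x_\mathrm{p}(\eps_k)$, it is the bracketed root in $\eta$ of $F(\eta,\eps_k)\coloneqq g(\eta, x_\mathrm{p}(\eps_k))$. Because $F$ is $C^2$ and $\partial_\eta F(\tilde\eps,\tilde\eps) = g_{\tilde x}'(\tilde\eps)\neq 0$, the implicit function theorem supplies a $C^2$ map $\Psi$ near $\tilde\eps$ with $g(\Psi(\eps), x_\mathrm{p}(\eps)) = 0$ and $\Psi(\tilde\eps) = \tilde\eps$; since $\eps_{k+1}\to\tilde\eps$ by \cref{thm:hec_converge}, local uniqueness forces $\eps_{k+1} = \Psi(\eps_k)$ for large $k$. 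The heart of the argument is then a single differentiation of $g(\Psi(\eps), x_\mathrm{p}(\eps)) = 0$, giving
\[
 g_{x_\mathrm{p}(\eps)}'(\Psi(\eps))\,\Psi'(\eps) + \nabla g_{\Psi(\eps)}(x_\mathrm{p}(\eps))^{\mathsf{T}} x_\mathrm{p}'(\eps) = 0 ,
\]
and at $\eps = \tilde\eps$ the stationarity condition $\nabla g_{\tilde\eps}(\tilde x) = 0$ annihilates the second term, so $g_{\tilde x}'(\tilde\eps)\,\Psi'(\tilde\eps) = 0$ and hence $\Psi'(\tilde\eps) = 0$.

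With $\Psi'(\tilde\eps) = 0$ in hand the conclusion is routine: Taylor's theorem for the $C^2$ map $\Psi$ yields $\eps_{k+1} - \tilde\eps = \Psi(\eps_k) - \Psi(\tilde\eps) = \tfrac12\Psi''(\xi_k)(\eps_k - \tilde\eps)^2$ for some $\xi_k$ between $\eps_k$ and $\tilde\eps$, whence $|\eps_{k+1} - \tilde\eps| \leq \tfrac12 M |\eps_k - \tilde\eps|^2$ with $M$ bounding $|\Psi''|$ near $\tilde\eps$, which is exactly Q-quadratic convergence. Equivalently, one can avoid the implicit function theorem and argue directly: writing $0 = g(\eps_{k+1}, x_\mathrm{p}(\eps_k)) = h(\eps_{k+1}) + [g(\eps_{k+1}, x_\mathrm{p}(\eps_k)) - g(\eps_{k+1}, x_\mathrm{p}(\eps_{k+1}))]$, the bracket is $O(\|x_\mathrm{p}(\eps_k) - x_\mathrm{p}(\eps_{k+1})\|^2) = O(|\eps_k - \eps_{k+1}|^2)$ because $\nabla g_{\eps_{k+1}}(x_\mathrm{p}(\eps_{k+1})) = 0$ kills the first-order term, while $h(\eps_{k+1}) = h'(\tilde\eps)(\eps_{k+1}-\tilde\eps) + O(|\eps_{k+1}-\tilde\eps|^2)$; using the monotonicity bound $|\eps_{k+1}-\tilde\eps| \le |\eps_k - \tilde\eps|$ and absorbing the $O(|\eps_{k+1}-\tilde\eps|^2)$ term into the left side for large $k$ gives the same estimate.

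I expect the main obstacle to be the first step rather than the differentiation: rigorously justifying that HEC eventually evolves along the single path $x_\mathrm{p}$, that is, that the stationary point $x_k$ returned by the expansion phase coincides with $x_\mathrm{p}(\eps_k)$, and that the contraction selects precisely the root $\Psi(\eps_k)$ lying near $\tilde\eps$. This is where the single-cluster-point hypothesis and the fact that $\tilde x$ is a local maximizer (so the monotonic optimizer is drawn to the maximizer branch $x_\mathrm{p}$) are essential. By contrast, once the scalar reduction is in place, the identity $\Psi'(\tilde\eps)=0$ follows cleanly and transparently exposes why stationarity of $\tilde x$, rather than mere local optimality, is the precise ingredient driving the quadratic rate.
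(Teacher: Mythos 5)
Your proposal is correct, but it proves quadratic convergence by a genuinely different mechanism than the paper. Both arguments share the same skeleton up to a point: you each reduce to the path function $h(\eps) = g(\eps, x_\mathrm{p}(\eps))$, use the envelope theorem (stationarity of $x_\mathrm{p}(\eps)$ killing the cross term) to get $h^\prime(\tilde\eps) = g_{\tilde x}^\prime(\tilde\eps) \neq 0$, and assert at the same level of rigor as the paper that $x_k = x_\mathrm{p}(\eps_k)$ for all large $k$ (the paper simply posits this as property (i) following from the single-cluster-point hypothesis, so your flagged ``main obstacle'' is no more of a gap in your write-up than in the published one). From there the routes diverge. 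The paper writes down two second-order Taylor expansions --- of $h$ about its root $\tilde\eps$ and of $g_{x_k}$ about the computed root $\hat\eps_k$ --- divides each by the (coinciding) first derivatives at $\eps_k$, subtracts to get $\eps_{k+1} - \tilde\eps = c_k(\tilde\eps - \eps_k)^2 + d_k(\eps_{k+1}-\eps_k)^2$, and then uses first-order expansions to show the step ratio $(\eps_{k+1}-\eps_k)/(\tilde\eps - \eps_k) \to 1$; this exhibits HEC explicitly as an adaptively damped Newton method, the interpretation the paper leans on elsewhere, and it needs no implicit function theorem: bracketing plus monotone convergence already guarantee $\hat\eps_k \in [\tilde\eps, \eps_k]$, so the computed root requires no local-uniqueness identification. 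You instead collapse the iteration to $\eps_{k+1} = \Psi(\eps_k)$ with $\Psi$ supplied by the IFT (valid since $\partial_\eta g(\tilde\eps,\tilde x) = g_{\tilde x}^\prime(\tilde\eps) \neq 0$ and $F(\eta,\eps) = g(\eta,x_\mathrm{p}(\eps))$ is $C^2$ under the stated hypotheses), identify $\eps_{k+1} = \Psi(\eps_k)$ for large $k$ correctly via local uniqueness and the convergence from \cref{thm:hec_converge}, and show $\Psi^\prime(\tilde\eps) = 0$ by one differentiation and stationarity --- a superattracting-fixed-point argument. What your route buys: it is shorter once set up, it isolates stationarity of $\tilde x$ as the precise cause of the quadratic rate, and it yields the explicit constant $\tfrac12\sup|\Psi^{\prime\prime}|$; your IFT-free fallback at the end (expanding $0 = g(\eps_{k+1}, x_\mathrm{p}(\eps_k))$ around $x_\mathrm{p}(\eps_{k+1})$ and absorbing the $\mathcal{O}(|\eps_{k+1}-\tilde\eps|^2)$ term) is also sound, using monotonicity to bound $|\eps_k - \eps_{k+1}|$ by a multiple of $|\eps_k - \tilde\eps|$. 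What the paper's route buys: the damped-Newton picture (quantifying how far each HEC step deviates from a Newton step), which underpins the discussion of negative/positive damping and the superlinear-convergence claim for early contraction in \cref{rem:hec_roc}, content that is invisible in the fixed-point formulation.
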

\begin{proof}
We begin by defining the function
\beq
 	h(\eps) \coloneqq g(\eps, x_\mathrm{p}(\eps)).
\eeq
Note that $h(\tilde \eps) = g(\tilde \eps, \tilde x) =  0$ as $(\tilde \eps, \tilde x)$ is a pseudoroot of~\cref{eq:root_max}.
Since the sequence~$\{x_k\}$ only has one cluster point,
there also exists some~$K$ such that for all~$k \geq K$, all of the following properties hold:
\begin{enumerate}[label=(\roman*),font=\normalfont]
\item point $x_k$ lies on path $x_\mathrm{p}$ with $x_k = x_\mathrm{p}(\eps_k)$, and so $h(\eps_k) = g_{x_k}(\eps_k)$,
\item $h^\prime(\eps_k) = g_{x_k}^\prime(\eps_k) \neq 0$,
\item $h$ and $g_{x_k}$ are twice continuously differentiable at $\eps_k$.
\end{enumerate}
By our assumptions, all of these statements also hold at~$\tilde \eps$.
The agreement of the first derivatives in (ii)
follows from the envelope theorem (or more generally, \cite[Theorem~10.31]{RocW98}),
since maximizers of $g_\eps$ do not occur at $\infty$ as $\Dtwo$ is compact.

Having established the needed properties above, 
we now consider the corresponding Newton steps for $h$ and 
$g_{x_k}$ evaluated at $\eps_k$, which also must coincide, \ie
\beq
	\label{eq:hec_newton}
	\eps^\mathrm{N}_k \coloneqq \eps_k - \frac{h(\eps_k)}{h^\prime(\eps_k)} = \eps_k - \frac{g_{x_k}(\eps_k)}{g_{x_k}^\prime(\eps_k)}.
\eeq
However, \cref{alg:hec} sets $\eps_{k+1} \coloneqq \hat \eps_k$, where $g_{x_k}(\hat \eps_k) = 0$.
Separately applying Taylor's theorem to $h$ and $g_{x_k}$, we have that
\[
	0 = h(\tilde \eps) = h(\eps_k) + h^\prime(\eps_k)(\tilde \eps - \eps_k) + \tfrac{1}{2} h^{\prime\prime} (\xi_k) (\tilde \eps - \eps_k)^2
\]
for some $\xi_k \in  [\tilde \eps, \eps_k]$ and
\[
	0 = g_{x_k}(\hat \eps_k) = g_{x_k}(\eps_k) + g_{x_k}^\prime(\eps_k)(\hat \eps_k - \eps_k) + \tfrac{1}{2} g_{x_k}^{\prime\prime} (\eta_k) (\hat \eps_k - \eps_k)^2
\]
for some $\eta_k \in [\tilde \eps, \eps_k]$.
Respectively dividing the two equations above by $h^\prime(\eps_k)$ and~$g_{x_k}^\prime(\eps_k)$, 
and then subtracting the first from the second and using \eqref{eq:hec_newton} along with~$\eps_{k+1} = \hat \eps_k$, 
we obtain
\beq
	\label{eq:hec_eps_diff}
	\eps_{k+1} - \tilde \eps = c_k(\tilde \eps - \eps_k)^2 + d_k(\eps_{k+1} - \eps_k)^2
\eeq
where
\[
	c_k = \frac{h^{\prime\prime}(\xi_k)}{2h^\prime(\eps_k)} \qquad \text{and} \qquad d_k = - \frac{g_{x_k}^{\prime\prime}(\eta_k)}{2g_{x_k}^\prime(\eps_k)}.
\]
To establish quadratic convergence, we need to bound $\eps_{k+1} - \eps_k$ in terms of~$\tilde \eps - \eps_k$.
To do this, consider the Taylor expansions of $h$ and $g_{x_k}$ but with only the first two terms, \ie
\[
	0 = h(\tilde \eps) = h(\eps_k) + h^\prime(\zeta_k)(\tilde \eps - \eps_k) 
\]
for some $\zeta_k \in  [\tilde \eps, \eps_k]$ and
\[
	0 = g_{x_k}(\hat \eps_k) = g_{x_k}(\eps_k) + g_{x_k}^\prime(\tau_k)(\hat \eps_k - \eps_k) 
\]
for some $\tau_k \in [\tilde \eps, \eps_k]$.
As $h(\eps_k) = g_{x_k}(\eps_k)$ and $\eps_{k+1} = \hat \eps_k$, it follows that 
\begin{equation}
	\label{eq:squared1}
	\frac{\eps_{k+1} - \eps_k}{\tilde \eps - \eps_k} = \frac{h^\prime(\zeta_k)}{g_{x_k}^\prime(\tau_k)},
\end{equation}
which converges to 1 as $k \to \infty$, since $h^\prime(\zeta_k)$ and $g_{x_k}^\prime(\tau_k)$ 
both\footnote{Note that in \cite[p.~1000]{MitO16}, there is a typo: in the second to last line of the proof of Theorem~4.4,
$g_{u_kv_k}^\prime(\eps_k)$ actually should be $g_{u_kv_k}^\prime(\tau_k)$.}
converge to~$h^\prime(\tilde \eps) \neq 0$.
Dividing \eqref{eq:hec_eps_diff} by $(\eps_k - \tilde \eps)^2$ and taking the absolute value yields
\[
	\frac{| \eps_{k+1} - \tilde \eps | }{(\eps_k - \tilde\eps)^2} 
	= \left| c_k+ d_k \left( \frac{\eps_{k+1} - \eps_k}{\eps_k - \tilde \eps} \right)^2 \right|.
\] 
By~\eqref{eq:squared1}, the squared term on the right converges to~$1$ as $k \to \infty$, 
while $c_k$ and $d_k$ also converge since their numerators are bounded and 
their denominators each converge to~$h^\prime(\tilde\eps) \neq 0$.
Thus, \cref{alg:hec} converges Q-quadratically.
\end{proof}

\begin{remark}
\label{rem:hec_roc}
A key part of the proof of~\cref{thm:hec_quad} 
is that the derivatives of~$h$ and~$g_{x_k}$ coincide at $\eps_k$,
which holds because under~\cref{asm:subs}, local maximizers of $g_{x_k}$ are computed exactly.
However, for some applications, it may actually be more efficient to solve the expansion phases inexactly 
at first, which in~\cite[Section~4.3]{MitO16} is called early contraction.
If the expansion phases are solved inexactly, 
but the inexactness goes to zero in the limit,
then HEC still converges at least Q-superlinearly; see~\cite[Sections 3.1 and 3.2]{Mit14} and \cite{DemES82}.
This is useful because when the expansion phases are expensive and require
many iterations of optimization, the early contraction strategy
can significantly reduce the cost of the expansion phases 
while only slightly increasing the total number of HEC iterations.
This can result in significantly faster overall runtimes; see~\cite[Section~8]{MitO16}.
As a final comment, note that the quadratic and superlinear rate of convergence results for HEC discussed 
in this paper hold regardless of how fast the contraction and expansions phases are solved; \eg
if the contraction and expansion phases are solved with linearly convergent methods,
HEC still has at least quadratic convergence (or superlinear if early contraction is used).
\end{remark}

\begin{remark}
If HEC converges to a finite number of cluster points of $\{x_k\}$, 
rather than a unique one as supposed in~\cref{thm:hec_quad},
then it is easy to see that if all the other conditions of~\cref{thm:hec_quad} hold for any subsequence $\{x_{k_i}\}$ converging
to a particular cluster point, then $\{\eps_{k_i}\}$ must converge at least quadratically.
Thus, if these conditions also hold for any subsequence to any of the finitely many cluster points,
then we expect that the rate of convergence of $\{\eps_k\}$ should still be quadratic.
\end{remark}

\section{Continuous-time passive systems} 
\label{sec:cont}
Returning to the optimization of passive systems, we first consider the continuous-time case where
the finite-dimensional state-space model $\M \coloneqq \{A,B,C,D\}$ is minimal and is given by \eqref{statespace_c} 
and its corresponding transfer function~$\T$~\eqref{transfer} is thus proper.
Furthermore, for the remainder of the paper, 
we consider passive transfer functions $\T$ (so $m=p$).
We begin with the theoretical background defining the passivity optimization problem we wish to solve, 
which as we will show, is equivalent to a root-min problem.

\subsection{Passivity of continuous-time proper parametric systems}
\label{sec:cont_mvd}
The material here in this subsection is mostly drawn from \cite{MehV20} but is recalled here in a concise way so that 
we can easily refer to it. We also briefly recall definitions and properties following \cite{Wil72a} 
and refer to the literature for proofs and more details. 

Given $\T$, consider the following rational matrix function of $s \in \C$:
\[ \Phi(s) \coloneqq \T^{\mathsf{H}}(-s) + \T(s), \] which coincides with twice the Hermitian part of $\T(s)$ on the imaginary axis:
\[ \Phi(\imath \omega)=[\T(\imath \omega)]^\mathsf{H} + \T(\imath \omega). \] 

\begin{definition} \label{def:passive}
The continuous-time transfer function $\T$ is
\begin{enumerate}
\item {\em passive} if $\Phi(\imath \omega)\succeq 0 $ for all $\ \omega\in \R\cup\{\infty\}$ 
and $\alpha(A) \leq 0$ with any eigenvalues of $A$ occurring on the imaginary axis being semi-simple and with a transfer function residue that is Hermitian and positive semi-definite,
\item {\em strictly passive} if $\Phi(\imath \omega)\succ 0 $ for all $\ \omega\in \R\cup\{\infty\}$ 
and $\alpha(A) < 0$. 
\end{enumerate} 
\end{definition}
Using the matrix
\begin{equation} \label{prls}
W_\mathrm{c}(X,\M)  \coloneqq  \left[
\begin{array}{cc}
- A^{\mathsf{H}}X-XA & C^{\mathsf{H}} - XB \\
C- B^{\mathsf{H}}X & D^{\mathsf{H}}+D 
\end{array}
\right],
\end{equation}
we have the following necessary and sufficient conditions for passivity of a finite-dimensional continuous-time system in state-space form;
see  \cite{Wil72a}.

\begin{theorem}
\label{thm:xi_char}
Let $\M \coloneqq \{A,B,C,D\}$ be a continuous-time minimal system and let its transfer function~$\T$ thus be proper.
Then $\T$ is (strictly) passive if and only if there exists an $X\in \Hn$ 
such that $X \succ 0$ and $W_\mathrm{c}(X,\M)\succeq 0$ ($W_\mathrm{c}(X,\M)\succ 0$).
\end{theorem}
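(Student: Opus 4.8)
The plan is to prove the two implications separately, carrying out the strict case in full and reducing the non-strict case to it. Throughout I abbreviate $F(\imath\omega) \coloneqq (\imath\omega I_n - A)^{-1}B$ and $R \coloneqq D\ct + D$.

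For the ``if'' direction I would first establish a purely algebraic identity valid for \emph{any} $X \in \Hn$:
\beq
  \Phi(\imath\omega) = \begin{bmatrix} F(\imath\omega) \\ I_m \end{bmatrix}\ct W_\mathrm{c}(X,\M) \begin{bmatrix} F(\imath\omega) \\ I_m \end{bmatrix}.
\eeq
This follows by direct substitution, using $(\imath\omega I_n - A)F(\imath\omega) = B$ to rewrite the contribution of the $-A\ct X - XA$ block as $B\ct X F + F\ct X B$, whose $X$-dependent part then cancels against the off-diagonal blocks, leaving exactly $(CF+D)\ct + (CF+D) = \T(\imath\omega)\ct + \T(\imath\omega) = \Phi(\imath\omega)$. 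Given $W_\mathrm{c}(X,\M)\succeq 0$ (resp.\ $\succ 0$), the identity yields $\Phi(\imath\omega)\succeq 0$ (resp.\ $\succ 0$) for every finite $\omega$, and the case $\omega=\infty$ follows by letting $\omega\to\infty$, since $F(\imath\omega)\to 0$ and $\Phi(\infty)=R$. The spectral requirements on $A$ in \cref{def:passive} come from the top-left block: $W_\mathrm{c}(X,\M)\succeq 0$ forces $-A\ct X - XA \succeq 0$, and with $X\succ 0$ a Lyapunov argument gives $\alpha(A)\le 0$ along with semisimplicity and the Hermitian PSD residue condition at any imaginary eigenvalue; in the strict case $-A\ct X - XA \succ 0$ gives $\alpha(A) < 0$ outright.

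For the ``only if'' direction I would treat strict passivity first. Here $\Phi(\infty)=R\succ 0$, so by the Schur complement with respect to the $R$ block, $W_\mathrm{c}(X,\M)\succ 0$ is equivalent to $R\succ 0$ together with
\beq
  -A\ct X - XA - (C\ct - XB)R^{-1}(C - B\ct X) \succ 0.
\eeq
I would then pass to the associated algebraic Riccati equation and its Hamiltonian matrix $H$, whose purely imaginary eigenvalues are exactly the imaginary-axis zeros of $\det\Phi$; since $\Phi(\imath\omega)\succ 0$ for all $\omega\in\R\cup\{\infty\}$, $H$ has no eigenvalue on the imaginary axis. Its stable invariant subspace then yields a Hermitian stabilizing solution $X$, and I would extract $X\succ 0$ from minimality via a Lyapunov-integral representation of $X$ (using $\alpha(A)<0$) together with observability of $(A,C)$.

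The main obstacle is the non-strict case, where $R$ may be singular, $H$ may have imaginary-axis eigenvalues, and the delicate spectral conditions on $A$ must be produced rather than assumed. My plan is to sidestep a direct Riccati treatment and instead either (i) perturb $\M$ to a nearby strictly passive $\M_\delta$, solve the strict LMI for each $\delta>0$, and pass to the limit along a convergent subsequence of the solutions $X_\delta$, which are bounded by minimality; or (ii) invoke the dissipativity framework of \cite{Wil72a} directly, taking $X$ to be the available-storage matrix, which under controllability is finite and Hermitian PSD, under observability is positive definite, and by construction satisfies the dissipation inequality equivalent to $W_\mathrm{c}(X,\M)\succeq 0$; the semisimplicity and residue conditions then follow from boundedness of $\Phi(\imath\omega)$ and this storage interpretation. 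As this step is the classical Positive Real Lemma, in practice I would cite \cite{Wil72a} for its technical core.
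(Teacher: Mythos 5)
The paper itself gives no proof of \cref{thm:xi_char}: it is the classical positive-real (Kalman--Yakubovich--Popov) lemma, stated with a pointer to \cite{Wil72a}, so your proposal can only be measured against the classical argument, which it largely reconstructs. Your congruence identity is correct for every $\omega$ with $\imath\omega\notin\Lambda(A)$ (the cancellation of the $X$-dependent terms works exactly as you say), and both of your fallback routes for the non-strict converse --- perturb to a strictly passive $\M_\delta$ and pass to a limit of bounded $X_\delta$, or take $X$ from Willems' available storage --- are the standard ones, with the extraction of $X\succ 0$ from observability being routine (if $Xv=0$, then $W_\mathrm{c}(X,\M)\succeq 0$ forces $Cv=0$ and $XAv=0$, so $\ker X$ is an $A$-invariant subspace of $\ker C$). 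One caveat in the ``if'' direction: semisimplicity of imaginary eigenvalues and $\alpha(A)\le 0$ do follow from $-A\ct X-XA\succeq 0$ with $X\succ 0$, but the Hermitian-PSD residue condition of \cref{def:passive} is \emph{not} a consequence of the top-left block alone, as your sketch suggests; you need the full matrix: if $Av=\imath\omega_0 v$, then $v\ct(-A\ct X-XA)v=0$ together with $W_\mathrm{c}(X,\M)\succeq 0$ forces the off-diagonal vanishing $(C-B\ct X)v=0$, and it is this relation that makes the residue of $\T$ at $\imath\omega_0$ Hermitian and positive semidefinite.

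The one step that fails as written is the strict ``only if''. The Hermitian stabilizing solution extracted from the stable invariant subspace of the Hamiltonian $H$ satisfies the algebraic Riccati \emph{equation}, i.e., the Schur complement $-A\ct X-XA-(C\ct-XB)R^{-1}(C-B\ct X)$ is exactly zero, so the resulting $W_\mathrm{c}(X,\M)$ is positive semidefinite and singular: it certifies passivity, not strict passivity, and your chain of implications stops one step short of the strict LMI. The standard repair --- and the one most consistent with the machinery of this paper --- is to run your Hamiltonian argument on the shifted system $\M_\xi$ of \eqref{eq:lti_xi_cont} for a small $\xi>0$, which is still strictly passive because the strictly passive $\xi$-interval is open (\cref{thm:xi_cont}); since $W_\mathrm{c}(X,\M)=W_\mathrm{c}(X,\M_\xi)+\xi\diag(X,I_m)$, even the merely semidefinite Riccati certificate for $\M_\xi$, combined with $X\succ 0$, yields $W_\mathrm{c}(X,\M)\succeq\xi\diag(X,I_m)\succ 0$. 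With that fix and the residue point above, your outline is a faithful reconstruction of the classical proof the paper delegates to \cite{Wil72a}.
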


In \cite{MehV20}, the following class of systems, parameterized by $\xi\in\R$, was considered:
\begin{subequations}
	\label{eq:lti_xi_cont}
	\begin{align}
	\M_\xi \coloneqq {}& \{A_\xi,B,C,D_\xi\} = \{ A+\tfrac{\xi}{2} I_n, B, C, D-\tfrac{\xi}{2} I_m \},\\
	\T_\xi(s) \coloneqq {}& C(sI-A_\xi)^{-1}B+D_\xi 
		= C((s - \tfrac{\xi}{2})I_n - A)^{-1}B+ D - \tfrac{\xi}{2}I_m , \\
	\Phi_\xi(s) \coloneqq {}& \T_\xi^{\mathsf{H}}(-s) + \T_\xi(s).
	\end{align}
\end{subequations}
For perturbations~$\Delta_\M = \{\Delta_A, \Delta_B, \Delta_C, \Delta_D\}$ allowed in the 
system model~$\M$, 
the size of the smallest value of $\|\begin{bsmallmatrix} \Delta_A & \Delta_B \\ \Delta_C & \Delta_D \end{bsmallmatrix}\|\fro$ 
at which the perturbed model~$\M + \Delta_\M$ loses passivity
depends on this scalar parameter~$\xi$ only; see~\cite{MehV20}.
It is therefore important to compute the values of $\xi$
for which these parametric systems are passive or strictly passive.
The following theorem, which is a combination of \cite[Theorem~4.5 and Lemma~6.2]{MehV20},
classifies these values~of~$\xi$.
Note that strict passivity of $\T$ implies regularity of the rational matrix function~$\Phi$, since~$\Phi(\infty)$ is invertible.
Thus, $\Phi_\xi$ is also guaranteed to be regular for almost all $\xi$, since~$\Phi_\xi(\infty)$ is
invertible for almost all $\xi$.

\begin{theorem}
\label{thm:xi_cont}
Let $\M \coloneqq \{A,B,C,D\}$ be a continuous-time minimal system and let its transfer function~$\T$ thus be proper. 
Then, for any $\xi \in \R$, the parametric system $\M_\xi$ with transfer function $\T_\xi$,
as defined in \eqref{eq:lti_xi_cont}, is also minimal and 
\begin{equation} 
	\label{eq:sup_cont}
	\Xi  \coloneqq \sup_{-\infty < \xi < \infty} \left\{\xi : \T_\xi \text{ is strictly passive}\right\}
	= \max_{-\infty < \xi < \infty} \left\{\xi : \T_\xi \text{ is passive}\right\}
\end{equation}
is bounded.  Moreover, 
$\T_\xi$ is strictly passive for $\xi \in (-\infty,\Xi)$, passive but not strictly passive for $\xi = \Xi$, and 
non-passive for $\xi \in (\Xi, +\infty)$.
\end{theorem}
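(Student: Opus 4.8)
The plan is to reduce the entire statement to the linear matrix inequality characterization of~\cref{thm:xi_char} via a single algebraic identity, and then read off the monotone structure in~$\xi$ directly from it. The key observation is that substituting $\M_\xi$ from~\eqref{eq:lti_xi_cont} into the matrix $W_\mathrm{c}$ of~\eqref{prls} yields, for every $X \in \Hn$,
\[
	W_\mathrm{c}(X,\M_\xi) = W_\mathrm{c}(X,\M) - \xi \diag(X, I_m).
\]
This follows by direct computation: since $\xi$ is real, the shift $A \mapsto A + \tfrac{\xi}{2}I_n$ contributes $-\xi X$ to the $(1,1)$ block, the shift $D \mapsto D - \tfrac{\xi}{2}I_m$ contributes $-\xi I_m$ to the $(2,2)$ block, and the off-diagonal blocks are untouched. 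When $X \succ 0$ the perturbation $\diag(X,I_m)$ is positive definite, so this identity already encodes the whole monotone dependence on~$\xi$. Throughout I write $P := \{\xi \in \R : \T_\xi \text{ is passive}\}$ and $S := \{\xi \in \R : \T_\xi \text{ is strictly passive}\}$.

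First I would dispatch the easy claims. Minimality of~$\M_\xi$ is immediate from the rank (PBH) characterizations: since $\lambda I_n - A_\xi = (\lambda - \tfrac{\xi}{2})I_n - A$, the pencils $\begin{bmatrix} \lambda I_n - A_\xi & B \end{bmatrix}$ and $\begin{bmatrix} (\lambda-\tfrac{\xi}{2})I_n - A & B \end{bmatrix}$ have the same rank for every~$\lambda$, so controllability of $(A,B)$ transfers to $(A_\xi,B)$, and observability transfers likewise. For the two extreme regimes I use the identity with $X = I_n$, giving $W_\mathrm{c}(I_n,\M_\xi) = W_\mathrm{c}(I_n,\M) - \xi I_{n+m}$, which is positive definite whenever $\xi < \lambda_{\min}(W_\mathrm{c}(I_n,\M))$; by~\cref{thm:xi_char} this makes $\T_\xi$ strictly passive, so $S \neq \emptyset$ and $\Xi > -\infty$. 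Conversely, passivity forces $\alpha(A_\xi) \leq 0$ by~\cref{def:passive}, and as $\alpha(A_\xi) = \alpha(A) + \tfrac{\xi}{2}$, no $\xi > -2\alpha(A)$ can be passive; hence $P \subseteq (-\infty, -2\alpha(A)]$ and $\Xi$ is bounded above.

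The heart of the monotonicity is a one-line consequence of the identity. If $\T_{\xi_0}$ is merely passive, witnessed by $X \succ 0$ with $W_\mathrm{c}(X,\M) \succeq \xi_0\diag(X,I_m)$, then for any $\xi < \xi_0$,
\[
	W_\mathrm{c}(X,\M) - \xi\diag(X,I_m) = \big[W_\mathrm{c}(X,\M) - \xi_0\diag(X,I_m)\big] + (\xi_0-\xi)\diag(X,I_m) \succ 0,
\]
being the sum of a positive semidefinite and a positive definite matrix, so the \emph{same} $X$ certifies strict passivity of~$\T_\xi$. Thus every parameter strictly below a passive one is strictly passive, whence $(-\infty,\Xi) \subseteq S \subseteq P$ and, since $\sup S \leq \sup P$ while $(-\infty,\Xi)\subseteq S$ forces $\sup S \geq \Xi$, the two suprema coincide; and every $\xi > \Xi = \sup P$ is non-passive by definition of the supremum. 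The same identity shows $\Xi \notin S$: strict definiteness is an open condition, so a strictly-passive witness at~$\Xi$ would, by continuity of $\xi \mapsto W_\mathrm{c}(X,\M) - \xi\diag(X,I_m)$, remain positive definite at $\Xi + \delta$ for small $\delta > 0$, contradicting $\Xi = \sup P$.

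The remaining and genuinely hardest step is attainment: that $\T_\Xi$ is itself passive, so that the supremum is a maximum and the boundary point is passive-but-not-strict. The obstacle is that the certifying matrices $X_\xi \succ 0$ for $\xi \uparrow \Xi$ could a priori degenerate (blow up or approach the boundary of the cone), so one cannot naively pass to a limit in the LMI. I would resolve this by invoking the classical theory of the positive-real/KYP inequality for minimal passive systems, which guarantees that the solution set $\{X \succ 0 : W_\mathrm{c}(X,\M_\xi)\succeq 0\}$ is nonempty, closed, convex, and bounded, sandwiched between positive-definite minimal and maximal solutions; tracking the minimal solution $X_\xi$ and using its monotone, bounded dependence on~$\xi$, one extracts a convergent subsequence $X_{\xi_j} \to X_\star \succ 0$ as $\xi_j \uparrow \Xi$, and continuity of the LMI then gives $W_\mathrm{c}(X_\star,\M) \succeq \Xi\diag(X_\star,I_m)$, i.e.\ $\T_\Xi$ is passive by~\cref{thm:xi_char}. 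Combined with $\Xi \notin S$ from the previous paragraph, $\T_\Xi$ is passive but not strictly passive, completing the classification. (Alternatively, attainment can be obtained from the frequency-domain condition $\Phi_\xi(\imath\omega) \succeq 0$ via a compactness argument over $\omega \in \R \cup \{\infty\}$ together with continuity in~$\xi$, as in~\cite{MehV20}.)
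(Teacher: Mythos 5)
Your proof is correct and follows essentially the same route as the paper's: the identity $W_\mathrm{c}(X,\M_\xi)=W_\mathrm{c}(X,\M)-\xi\diag(X,I_m)$, combined with \cref{thm:xi_char}, is exactly the linkage ``passivity of $\T_\xi$ iff $W_\mathrm{c}(X,\M)\succeq\xi\diag(X,I_m)$ for some $X\succ 0$'' on which the paper's monotonicity and open-interval argument rests, and your shift-of-variable treatment of minimality matches as well. The only difference is one of detail: where the paper defers boundedness and attainment of the supremum to \cite{MehV20}, you supply them explicitly (the upper bound from $\alpha(A_\xi)=\alpha(A)+\tfrac{\xi}{2}\leq 0$, and attainment via the nested, uniformly bounded solution sets of the KYP inequality with positive definite extremal solutions), which is a faithful filling-in of the deferred steps rather than a different approach.
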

\begin{proof}
It is obvious that adding $\frac{\xi}{2}I$ to the matrix $A$ does not affect controllability or observability since it is a mere shift of the variable $\lambda$ in the corresponding rank conditions. The rest of the proof is based on the fact that passivity of $\T_\xi$ is 
linked to the inequality $W_\mathrm{c}(X,\M)\succeq \xi \diag(X,I_m)$ for some $X\succ 0$, 
 and that strict passivity of $\T_\xi$ is linked to the strict inequality $W_\mathrm{c}(X,\M)\succ \xi \diag(X,I_m)$ for some $X\succ 0$. 
Consequently, for all $\tilde \xi < \xi$, $\T_\xi$ being passive implies that $\T_{\tilde \xi}$ is strictly passive,
and if~$\T_\xi$ is strictly passive, then $\T_{\tilde \xi}$ is also strictly passive in an open neighborhood about~$\xi$.
 This proves that the interval for $\xi$ corresponding to strictly passive systems and non-passive systems are both open and connected, and that there is a single 
 boundary point $\Xi$ that must be passive, but not strictly passive. The boundedness of $\Xi$ follows from the minimality of the realization for $\T_\xi$.
 We refer to \cite{MehV20} for the details.
\end{proof}

By computing $\Xi$,  we can ascertain whether $\M_\xi$ is (strictly) passive or non-passive for all~$\xi \in \R$.
For a given value of~$\xi$, by~\cref{def:passive}, $\M_\xi$ corresponds to a strictly passive system if and only if
$\Phi_\xi(\imath \omega)\succ 0$ for all $\omega \in \R \cup \{\infty\}$,
and $\alpha(A_\xi) < 0$.
Checking asymptotic stability is done via computing $\alpha(A_\xi)$.
When $n$ is not too large, the entire spectrum of $A_\xi$ can be computed to obtain $\alpha(A_\xi)$,
while for large $n$, one can use, \eg \texttt{eigs} in \matlab, to efficiently compute a globally rightmost eigenvalue of $A_\xi$.
Checking the positive definiteness condition is more involved. 
For $\xi,\omega \in \R$, consider 
\beq
	\label{eq:gamma_cont}
	\gamma(\xi,\omega) \coloneqq \lambda_\mathrm{min} (\Phi_\xi(\imath \omega))
	\quad \text{and} \quad
	\begin{aligned}
		\gamma_\xi(\omega) &\coloneqq \gamma(\xi,\omega),  \ \ \text{where $\xi \in \R$ is fixed}, \\
		\gamma_\omega(\xi) &\coloneqq \gamma(\xi,\omega), \ \ \text{where $\omega \in \R$ is fixed}.
	\end{aligned}
\eeq
Clearly, $\Phi_\xi(\imath \omega) \succ 0$ if and only if $\gamma_\xi(\omega) > 0$,
and at $\omega = \infty$, this is simply equivalent to~$D_\xi\ct + D_\xi \succ 0$, with 
$\lim_{\omega \to \infty} \gamma_\xi(\omega) = \lambda_\mathrm{min}(D_\xi\ct + D_\xi) > 0$.
If~$\alpha(A_\xi) < 0$, then~$\T_\xi$ has no poles on the 
imaginary axis, and so neither does $\Phi_\xi$; hence, 
$\gamma_\xi$ is a continuous function.
Thus, if $D_\xi\ct + D_\xi \succ 0$ and $\alpha(A_\xi) < 0$,
then~$\gamma_\xi(\omega_1) \leq 0$ if and only if
$\det \Phi_\xi(\imath\omega_2) = 0$ for~$\omega_1, \omega_2 \in \R$,
with $\omega_1 = \omega_2$ not necessarily holding.
Summarizing, we have the following necessary and sufficient 
algebraic continuous-time conditions for the strict passivity of $\T_\xi$:
\begin{enumerate}
	\item[(C1)] $\alpha(A_\xi) < 0$ (asymptotic stability of $A_\xi = A + \frac{\xi}{2}  I_n$),
	\item[(C2)] $D_\xi\ct +D_\xi = D\ct+D - \xi I_m \succ 0$ (positive definiteness at $\omega=\infty$),
	\item[(C3)] $\det \Phi_\xi(\imath \omega) \neq 0$ for all $\omega \in \R$ 
		(implying positive definiteness for all finite $\omega$ provided that (C1) and (C2) also hold).
\end{enumerate}

A bracket containing $\Xi$ can be easily computed.
A simple lower bound on $\Xi$ is 
\begin{equation}
	\label{eq:xilb_cont}
	\Xilb \coloneqq \lambda_{\min}(W_\mathrm{c}(I_n,\M)),
\end{equation}
as clearly 
\[ 
 	W_\mathrm{c}(I_n,\M_{\Xilb}) = W_\mathrm{c}(I_n,\M) - \Xilb I_{n+m} \succeq 0
\]
holds, and so by~\cref{thm:xi_char,thm:xi_cont}, $\T_{\Xilb}$ is passive.  
Meanwhile, (C1) and (C2) will no longer be satisfied if~$\xi$ is too large:
(C1) holds if and only if $\xi < -2 \alpha(A)$, and 
(C2) holds if and only if $\xi < \lambda_{\min}\left(D^{\mathsf{H}}+D\right)$.
Thus, a simple upper bound for $\Xi$ is
\begin{equation}
	\label{eq:xiub_cont}
 	\Xiub \coloneqq \min \left\{ -2 \alpha(A), \lambda_{\min}\left(D^\mathsf{H}+D\right) \right\}.
\end{equation}

Let us now look at $\xi \in [\Xilb,\Xiub)$, where for this half open interval, $\alpha(A_\xi) < 0$ and $D^\mathsf{H}_\xi +D_\xi \succ 0$. Therefore, in order to verify the strict passivity of $T_\xi$, one only needs to verify condition (C3), 
\ie that $\det \Phi_\xi(\imath \omega) \neq 0$ for all $\omega \in \R$.
This condition can be checked via the following result for $\M_\xi$, 
which is well known in the literature for general systems~$\M$ (see, \eg \cite{Meh91}).

\begin{theorem}
\label{thm:zeros_cont}
Let $\xi \in \R$, $\M_\xi$ and $\Phi_\xi$ be as defined in \eqref{eq:lti_xi_cont}, 
and $\omega \in \C$ (not only $\R$) be any point such that $\imath \omega \not \in \Lambda(A_\xi)$.
Then $\det \Phi_\xi(\imath \omega) = 0$ if and only if $\det (M_\xi - \omega N) = 0$, 
where the regular Hermitian pencil $M_\xi - \lambda N$ is defined by
\begin{equation} 
	\label{eq:eig_extended_cont}
	M_\xi \coloneqq
	\begin{bmatrix} 
		0 & A_\xi & B \\ 
		A_\xi^\mathsf{H} & 0 & C^{\mathsf{H}} \\
		B^\mathsf{H} & C & D_\xi^{\mathsf{H}}+D_\xi 
	\end{bmatrix}
	\ \ \text{and} \ \
	N \coloneqq
	\begin{bmatrix} 
		0 & \imath I_n & 0 \\ 
		-\imath I_n & 0 & 0 \\
		0 & 0 & 0 
	\end{bmatrix}.
\end{equation}
Furthermore, if $D_\xi^\mathsf{H} + D_\xi$ is nonsingular, 
then $\det \Phi_\xi(\imath \omega) = 0$ if and only if $\det (H_\xi - \imath \omega I_{2n}) = 0$, 
where $H_\xi$ is the Hamiltonian matrix 
\begin{equation}
	\label{eq:eig_cont}
	H_\xi \coloneqq
	\begin{bmatrix} A_\xi & 0 \\ 0 & -A_\xi^{\mathsf{H}} \end{bmatrix} 
	- \begin{bmatrix}  B \\ C^{\mathsf{H}} \end{bmatrix} 
	\begin{pmatrix} D_\xi^{\mathsf{H}}+D_\xi \end{pmatrix}^{-1}
	\begin{bmatrix} C & -B^{\mathsf{H}} \end{bmatrix}.
\end{equation}
\end{theorem}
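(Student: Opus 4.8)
The plan is to prove each equivalence by factoring the relevant determinant as a nonzero scalar multiple of $\det\Phi_\xi(\imath\omega)$, using a Schur complement for the pencil and the matrix determinant lemma for the Hamiltonian. Throughout I abbreviate $S \coloneqq D_\xi\ct + D_\xi$ and, for the point $\omega$ in question, $P \coloneqq \imath\omega I_n - A_\xi$ and $R \coloneqq -\imath\omega I_n - A_\xi\ct$, so that $\T_\xi(\imath\omega) = CP^{-1}B + D_\xi$, $\T_\xi\ct(-\imath\omega) = B\ct R^{-1}C\ct + D_\xi\ct$, and hence
\[
	\Phi_\xi(\imath\omega) = S + CP^{-1}B + B\ct R^{-1}C\ct .
\]
The hypothesis $\imath\omega \notin \Lambda(A_\xi)$ makes $P$ invertible; for real $\omega$ it simultaneously makes $R = P\ct$ invertible, while for complex $\omega$ the invertibility of $R$ is exactly the regularity needed for $\Phi_\xi(\imath\omega)$ to be defined, so I lose no generality in assuming it.

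For the first equivalence, I would write the pencil in the block form
\[
	M_\xi - \omega N = \begin{bmatrix} 0 & -P & B \\ -R & 0 & C\ct \\ B\ct & C & S \end{bmatrix},
\]
using $A_\xi - \imath\omega I_n = -P$ and $A_\xi\ct + \imath\omega I_n = -R$. The leading $2n\times 2n$ block is invertible, with inverse $\begin{bmatrix} 0 & -R^{-1} \\ -P^{-1} & 0 \end{bmatrix}$ and determinant $(-1)^n\det(P)\det(R)\neq 0$. Taking its Schur complement in $M_\xi - \omega N$ and simplifying yields exactly
\[
	S - \begin{bmatrix} B\ct & C \end{bmatrix}\begin{bmatrix} 0 & -P \\ -R & 0 \end{bmatrix}^{-1}\begin{bmatrix} B \\ C\ct \end{bmatrix} = S + CP^{-1}B + B\ct R^{-1}C\ct = \Phi_\xi(\imath\omega).
\]
The Schur determinant formula then gives $\det(M_\xi - \omega N) = (-1)^n\det(P)\det(R)\,\det\Phi_\xi(\imath\omega)$, and since the prefactor is nonzero, $\det(M_\xi - \omega N) = 0$ if and only if $\det\Phi_\xi(\imath\omega) = 0$.

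For the second equivalence, assume in addition that $S$ is nonsingular. I would regard $H_\xi - \imath\omega I_{2n}$ as a rank-$m$ modification of an invertible block-diagonal matrix,
\[
	H_\xi - \imath\omega I_{2n} = \diag(-P, R) - \begin{bmatrix} B \\ C\ct \end{bmatrix} S^{-1} \begin{bmatrix} C & -B\ct \end{bmatrix},
\]
where I again use $A_\xi - \imath\omega I_n = -P$ and $-A_\xi\ct - \imath\omega I_n = R$. Applying the matrix determinant lemma (Sylvester's identity) with $X = \diag(-P,R)$ collapses the $2n\times 2n$ determinant to the $m\times m$ factor $\det\bigl(I_m + S^{-1}(CP^{-1}B + B\ct R^{-1}C\ct)\bigr) = \det\bigl(S^{-1}\Phi_\xi(\imath\omega)\bigr)$, so that $\det(H_\xi - \imath\omega I_{2n}) = (-1)^n\det(P)\det(R)(\det S)^{-1}\det\Phi_\xi(\imath\omega)$. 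The prefactor is again nonzero, which gives the claimed equivalence. (The same conclusion can instead be reached by eliminating the $(3,3)$ block $S$ of the pencil, although that route needs an extra block permutation to match the layout of $H_\xi$.)

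The calculations are routine once the blocks are set up, so the main obstacle is bookkeeping rather than any genuine difficulty. I must track the conjugate-transpose blocks and signs carefully, so that the Schur complement collapses precisely to $\Phi_\xi(\imath\omega) = S + CP^{-1}B + B\ct R^{-1}C\ct$ and not a sign-flipped variant; the key point is recognizing that the off-diagonal blocks $-P$ and $-R$ of the pencil have inverses that recombine with $B, C, B\ct, C\ct$ to reproduce exactly the transfer function and its para-conjugate. The only other subtlety is the complex-$\omega$ case: the stated hypothesis names only $\imath\omega\notin\Lambda(A_\xi)$, i.e.\ invertibility of $P$, whereas the factorization also uses invertibility of $R$; since $R = P\ct$ for real $\omega$ and $R$ being invertible is precisely what makes $\Phi_\xi(\imath\omega)$ well defined, this is harmless, but it is worth flagging explicitly.
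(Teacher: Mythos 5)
Your proposal is correct, and your first equivalence is exactly the paper's argument: write the pencil in the displayed block form and take the Schur complement with respect to the leading $2n\times 2n$ block, whose determinant $(-1)^n\det(P)\det(R)$ is a nonzero prefactor of $\det\Phi_\xi(\imath\omega)$. For the second equivalence you diverge slightly: you apply the matrix determinant lemma directly to $H_\xi - \imath\omega I_{2n}$ viewed as the rank-$m$ update $\diag(-P,R) - \begin{bsmallmatrix} B \\ C\ct \end{bsmallmatrix}S^{-1}\begin{bsmallmatrix} C & -B\ct \end{bsmallmatrix}$, collapsing to the $m\times m$ factor $\det(S^{-1}\Phi_\xi(\imath\omega))$, whereas the paper instead eliminates the trailing $m\times m$ block of the \emph{same} pencil, identifying the resulting Schur complement as $(H_\xi - \imath\omega I_{2n})\begin{bsmallmatrix} 0 & -I_n \\ I_n & 0 \end{bsmallmatrix}$ and obtaining $\det(M_\xi - \omega N) = \det(D_\xi\ct + D_\xi)\det\bigl((H_\xi - \imath\omega I_{2n})\begin{bsmallmatrix} 0 & -I_n \\ I_n & 0\end{bsmallmatrix}\bigr)$ — precisely the alternative you mention parenthetically, block permutation included. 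The two routes are cousins (Schur complementation with respect to complementary blocks), but they buy different things: the paper's version ties the pencil and the Hamiltonian spectra together through one chain of determinant identities on a single object, which is the relation exploited algorithmically, while yours produces a clean standalone formula $\det(H_\xi - \imath\omega I_{2n}) = (-1)^n\det(P)\det(R)(\det S)^{-1}\det\Phi_\xi(\imath\omega)$ without passing through the pencil at all. One genuine merit of your write-up: your flag about the invertibility of $R = -\imath\omega I_n - A_\xi\ct$ for non-real $\omega$ is a point the paper glosses over — its proof asserts the leading $2n\times 2n$ block is nonsingular ``by assumption,'' though the stated hypothesis $\imath\omega\notin\Lambda(A_\xi)$ literally covers only $P$; your observation that invertibility of $R$ is exactly what makes $\T_\xi\ct(-\imath\omega)$, and hence $\Phi_\xi(\imath\omega)$, well defined supplies the missing justification.
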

\begin{proof} 
Writing
\[
	M_\xi-\omega N \coloneqq   
	\begin{bmatrix} 
		0 & A_\xi - \imath\omega I_n \! & B \\ 
		 A_\xi^\mathsf{H} + \imath\omega I_n  & 0 & C^{\mathsf{H}} \\
		B^\mathsf{H} & C & D_\xi^{\mathsf{H}}+D_\xi 
	\end{bmatrix},
\]
and using the Schur identity of determinants with respect to the leading $2n\times 2n$ block,
which by assumption is nonsingular,
we obtain that 
\[ \det ( M_\xi - \omega N)=\det\begin{bmatrix} 
		0 & A_\xi -\imath\omega I_n \\ 
		A_\xi^\mathsf{H}+\imath\omega I_n & 0 
	\end{bmatrix}
\det \Phi_\xi(\imath \omega).
\] 
As $\imath \omega \not \in \Lambda(A_\xi)$, 
the first equivalence involving $M_\xi - \lambda N$ holds, and since $\Phi_\xi(\imath \omega)$ is regular,
this matrix pencil must be a regular one.
To obtain the second equivalence, we consider
$(H_\xi-\imath\omega I_{2n})
	\begin{bsmallmatrix} 
		0 & -I_n  \\ I_n  & 0 
	\end{bsmallmatrix}
$,
which is the Schur complement of $M_\xi -\omega N$
with respect to the trailing $m\times m$ block
and requires the additional assumption that $D_\xi^{\mathsf{H}}+D_\xi$ is nonsingular.
Then, via the Schur identity of determinants, we have that
\[ \det (M_\xi - \omega N)=\det (D_\xi^\mathsf{H}+D_\xi) 
\det \left( (H_\xi-\imath\omega I_{2n})\begin{bmatrix} 
		0 & -I_n  \\ I_n  & 0 
	\end{bmatrix}\right).
\] 
\end{proof}

\begin{corollary}
\label{cor:zeros_cont}
Let $\Xilb$ and $\Xiub$ be as defined in~\eqref{eq:xilb_cont} and~\eqref{eq:xiub_cont}, respectively,
and let $\xi \in [\Xilb,\Xiub)$.
Then the function $\gamma_\xi : \R \to \R$ defined in~\eqref{eq:gamma_cont}
has at most $2n$ zeros, all of which must be finite.
\end{corollary}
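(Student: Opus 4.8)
The plan is to reduce each zero of $\gamma_\xi$ to a purely imaginary eigenvalue of the Hamiltonian matrix $H_\xi$ from~\eqref{eq:eig_cont} and then simply count eigenvalues. First I would observe that if $\omega \in \R$ satisfies $\gamma_\xi(\omega) = 0$, then $\lambda_{\min}(\Phi_\xi(\imath\omega)) = 0$, so $\Phi_\xi(\imath\omega)$ is singular and hence $\det \Phi_\xi(\imath\omega) = 0$. In other words, the zero set of $\gamma_\xi$ is contained in the (generally larger) set $\{\omega \in \R : \det\Phi_\xi(\imath\omega) = 0\}$, so it suffices to bound the cardinality of the latter.

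Next I would invoke the standing hypotheses. Since $\xi \in [\Xilb,\Xiub)$, conditions (C1) and (C2) hold: from $\alpha(A_\xi) < 0$ we get $\imath\omega \notin \Lambda(A_\xi)$ for every $\omega \in \R$, which is exactly the assumption required in~\cref{thm:zeros_cont}; and $D_\xi^{\mathsf H}+D_\xi = D^{\mathsf H}+D - \xi I_m \succ 0$ is in particular nonsingular, so the Hamiltonian form of~\cref{thm:zeros_cont} applies. Its second equivalence then gives, for every $\omega \in \R$, that $\det\Phi_\xi(\imath\omega) = 0$ if and only if $\det(H_\xi - \imath\omega I_{2n}) = 0$, i.e.\ if and only if $\imath\omega \in \Lambda(H_\xi)$.

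Finally I would count. As $H_\xi$ is a $2n \times 2n$ matrix, it has at most $2n$ distinct eigenvalues, and hence at most $2n$ of them can be purely imaginary; this yields at most $2n$ distinct values of $\omega \in \R$ with $\imath\omega \in \Lambda(H_\xi)$. Since the zeros of $\gamma_\xi$ form a subset of these, $\gamma_\xi$ has at most $2n$ zeros. For finiteness, note that each such zero corresponds to a finite eigenvalue $\imath\omega$ of $H_\xi$, so $\omega$ itself is finite; equivalently, since $\gamma_\xi(\omega) \to \lambda_{\min}(D_\xi^{\mathsf H}+D_\xi) > 0$ as $\omega \to \pm\infty$, no zero can escape to infinity.

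I do not anticipate a serious obstacle here; the only point needing care is verifying that the hypotheses of~\cref{thm:zeros_cont} hold across the entire real axis (the condition $\imath\omega \notin \Lambda(A_\xi)$ and nonsingularity of $D_\xi^{\mathsf H}+D_\xi$), which is precisely what restricting to $\xi \in [\Xilb,\Xiub)$ buys us, together with the observation that the zeros of $\gamma_\xi$ need only be contained in---rather than equal to---the zeros of $\det\Phi_\xi(\imath\,\cdot\,)$, so that the eigenvalue count is a valid upper bound.
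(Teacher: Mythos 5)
Your proof is correct and follows essentially the same route as the paper's: reduce zeros of $\gamma_\xi$ to zeros of $\det\Phi_\xi(\imath\,\cdot\,)$, use $\xi \in [\Xilb,\Xiub)$ to verify (C1) and (C2) so that the Hamiltonian equivalence of~\cref{thm:zeros_cont} applies, and then count the (finite) eigenvalues of the $2n \times 2n$ matrix $H_\xi$. Your added observations---that the zero set of $\gamma_\xi$ is only \emph{contained} in that of the determinant, and that $\gamma_\xi$ is eventually positive as $\omega \to \pm\infty$---are sound and merely make explicit what the paper leaves implicit.
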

\begin{proof}
If $\gamma_\xi(\omega)=0$, then $\det \Phi_\xi(\imath \omega) = 0$.
Since $\xi \in [\Xilb,\Xiub)$, (C1) and (C2) both hold, and so
the assumptions of~\cref{thm:zeros_cont} are met.
Hence, $\det \Phi_\xi(\imath \omega) = 0$ if and only if $\det (H_\xi - \imath \omega I_{2n}) = 0$.
Finally, as $H_\xi \in \C^{2n \times 2n}$, it has $2n$ (finite) eigenvalues.
\end{proof}

Given the bracket $[\Xilb,\Xiub]$,
\cref{thm:zeros_cont} immediately leads to a bisection method 
for computing $\Xi$ \cite[p.~144]{MehV20}.  For any $\xi \in [\Xilb,\Xiub)$,
(C3) can be verified by computing the eigenvalues of either $M_\xi - \lambda N$ 
or $H_\xi$ (the pencil form is preferred numerically, since it only has a linear dependence on $\xi$).
Via the following result, Mehrmann and Van Dooren also proposed a second improved algorithm
for computing $\Xi$~\cite[p.~146]{MehV20}.

\begin{theorem} 
\label{thm:root_min_cont}
Let $\gamma : \R \times \R \to \R$ and $\gamma_\xi : \R \to \R$ be as defined in \eqref{eq:gamma_cont}
and~$\Xilb$ and~$\Xiub$ be as defined in~\eqref{eq:xilb_cont} and~\eqref{eq:xiub_cont}, respectively.
Then~$\gamma$ is continuous on the domain $[\Xilb,\Xiub) \times \R$,
and $\gamma_\xi$ has the following properties:
\begin{enumerate}[label=(\roman*),font=\normalfont]
\item if $\xi\in[\Xilb,\Xi)$, then $\min_{\omega \in \R} \gamma_\xi(\omega) > 0$, 
\item if $\xi=\Xi$, then $\min_{\omega \in \R} \gamma_\xi(\omega) = 0$, 
\item if $\xi \in (\Xi,\Xiub)$ with $\Xi < \Xiub$, then $\gamma_\xi(\omega) < 0$ for $\forall \omega \in \mathcal{S} \subset \R$,
	where $\mathcal{S}$ consists of a finite number of non-overlapping open bounded intervals.
\end{enumerate}
\end{theorem}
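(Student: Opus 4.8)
The plan is to convert the qualitative passivity classification of \cref{thm:xi_cont} into quantitative statements about the sign and shape of $\gamma_\xi$, relying throughout on the fact that for $\xi\in[\Xilb,\Xiub)$ both (C1) and (C2) hold, so that $A_\xi$ has no eigenvalues on the imaginary axis and $\lim_{\omega\to\pm\infty}\gamma_\xi(\omega)=\lambda_{\min}(D_\xi^{\mathsf{H}}+D_\xi)>0$. First I would settle continuity: for $\xi\in[\Xilb,\Xiub)$, condition (C1) makes $\imath\omega I_n-A_\xi$ invertible for every $\omega\in\R$, so $\T_\xi(\imath\omega)$ and hence the Hermitian matrix $\Phi_\xi(\imath\omega)$ are rational—thus continuous—functions of $(\xi,\omega)$ on $[\Xilb,\Xiub)\times\R$. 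Since the map $M\mapsto\lambda_{\min}(M)$ is continuous (indeed Lipschitz, by Weyl's inequality) on Hermitian matrices, the composition $\gamma(\xi,\omega)=\lambda_{\min}(\Phi_\xi(\imath\omega))$ is continuous on this domain.

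For (i) and (ii) I would read the sign of $\gamma_\xi$ directly off passivity. If $\xi\in[\Xilb,\Xi)$, then $\T_\xi$ is strictly passive by \cref{thm:xi_cont}, so $\Phi_\xi(\imath\omega)\succ 0$ for all $\omega\in\R\cup\{\infty\}$ and thus $\gamma_\xi(\omega)>0$ pointwise. To upgrade this to $\min_\omega\gamma_\xi(\omega)>0$, I would invoke the positive limit at infinity: there exist $R>0$ and $c>0$ with $\gamma_\xi(\omega)\geq c$ for $|\omega|\geq R$, while on the compact set $[-R,R]$ continuity forces a positive minimum, so the global infimum is attained and strictly positive. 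For (ii), assuming $\Xi\in[\Xilb,\Xiub)$, passivity of $\T_\Xi$ gives $\gamma_\Xi(\omega)\geq 0$ everywhere, while the loss of strict passivity—which, since (C1) and (C2) still hold strictly, can only come from (C3) failing at some finite $\omega_0$—yields $\det\Phi_\Xi(\imath\omega_0)=0$ with $\Phi_\Xi(\imath\omega_0)\succeq 0$, hence $\gamma_\Xi(\omega_0)=0$; combining these gives $\min_\omega\gamma_\Xi(\omega)=0$.

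For (iii), I would fix $\xi\in(\Xi,\Xiub)$, so $\xi\in[\Xilb,\Xiub)$ and $\T_\xi$ is non-passive by \cref{thm:xi_cont}; with (C1)--(C2) in force, non-passivity means $\Phi_\xi(\imath\omega)$ fails to be positive semidefinite somewhere, so $\mathcal{S}\coloneqq\{\omega\in\R:\gamma_\xi(\omega)<0\}$ is nonempty. By the continuity above, $\mathcal{S}$ is open; because $\lim_{\omega\to\pm\infty}\gamma_\xi(\omega)>0$ there is an $R$ with $\gamma_\xi>0$ for $|\omega|>R$, so $\mathcal{S}\subseteq(-R,R)$ is bounded. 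Decomposing $\mathcal{S}$ into its maximal disjoint open intervals, each finite endpoint lies outside $\mathcal{S}$ yet is a limit of points where $\gamma_\xi<0$, so by continuity $\gamma_\xi$ vanishes there; distinct maximal intervals have distinct endpoints, and by \cref{cor:zeros_cont} the function $\gamma_\xi$ has at most $2n$ zeros, so there can be only finitely many such intervals.

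The main obstacle is not any isolated computation but the passage from pointwise sign data to global structure: obtaining a strictly positive \emph{minimum} in (i) and a \emph{bounded} set $\mathcal{S}$ in (iii) both hinge on the controlled behavior as $\omega\to\infty$ supplied by (C2), and the \emph{finiteness} of the intervals in (iii) is exactly where \cref{cor:zeros_cont} is indispensable. A separate subtlety to flag is the boundary case $\Xi=\Xiub$ in (ii): there $\gamma_\Xi$ may be positive on $\R$ with infimum $0$ attained only as $\omega\to\infty$ (or $A_\Xi$ may acquire an imaginary-axis pole), which is precisely why (iii) is stated under the hypothesis $\Xi<\Xiub$ and (ii) is most naturally read in that same regime.
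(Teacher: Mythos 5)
Your proposal is correct, but it takes a genuinely different route from the paper: the paper's proof of this theorem is essentially a citation, deferring items (i)--(iii) to \cite[Theorem~5.1]{MehV20} and adding only the one new claim---boundedness of the intervals in (iii)---which it notes follows directly from \cref{cor:zeros_cont}. You instead reconstruct the entire result from the paper's own ingredients: continuity of $\gamma$ via (C1) and the Lipschitz continuity of $\lambda_{\min}$ on Hermitian matrices, the sign information in (i)--(iii) read off from the passivity classification of \cref{thm:xi_cont} together with the equivalence of strict passivity with (C1)--(C3), the compactness-plus-limit argument using $\lim_{|\omega|\to\infty}\gamma_\xi(\omega)=\lambda_{\min}(D_\xi^{\mathsf{H}}+D_\xi)>0$ to get an attained positive minimum in (i) and boundedness of $\mathcal{S}$ in (iii), and \cref{cor:zeros_cont} to cap the number of maximal intervals (the paper uses that corollary only for boundedness, leaving finiteness to the cited reference, whereas you use it for both). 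What the paper's approach buys is economy; what yours buys is a self-contained argument that makes the logical dependencies explicit---in particular, your observation that in (ii) the loss of strict passivity at $\xi=\Xi$ must be a failure of (C3) at a \emph{finite} frequency (because (C1)--(C2) hold strictly when $\Xi<\Xiub$) is exactly the point that makes the minimum attained and equal to zero, and your flagging of the boundary case $\Xi=\Xiub$, where the infimum may only be approached as $\omega\to\infty$ or $A_\Xi$ may acquire imaginary-axis eigenvalues, correctly identifies why the theorem restricts continuity to $[\Xilb,\Xiub)\times\R$ and states (iii) under the hypothesis $\Xi<\Xiub$. One cosmetic remark: your phrase ``distinct maximal intervals have distinct endpoints'' should be read as ``distinct maximal intervals have distinct left endpoints,'' since adjacent maximal intervals of $\mathcal{S}$ can in principle share a common endpoint where $\gamma_\xi$ vanishes; the counting argument via the at most $2n$ zeros is unaffected.
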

\begin{proof}
 This was proven in \cite[Theorem~5.1]{MehV20} except for the claim in (iii) that the intervals 
 are bounded, which follows directly from~\cref{cor:zeros_cont}.
\end{proof}

Mehrmann and Van Dooren's improved method computes a monotonically decreasing 
sequence $\{\xi_k\} \to \Xi$, where the initial estimate is $\xi_0 = \Xiub  - \tau$
for some small tolerance~$\tau > 0$. 
On the $k$th iteration, via~\cref{thm:zeros_cont} and computing 
the eigenvalues of $M_{\xi_k} - \lambda N$, the bounded intervals where $\gamma_{\xi_k}$ is negative 
are obtained.  Taking $\hat \omega$ to be the midpoint of the largest of these intervals with $\gamma_{\xi_k}(\hat \omega) < 0$
holding, $\xi_{k+1}$ is obtained by setting it to the smallest value of $\xi$ such that $\gamma_{\hat\omega}(\xi) = 0$;
this is done by computing all the eigenvalues of a matrix pencil closely related to~$M_{\xi_k} - \lambda N$ (and of the same order).
This process is continued in a loop until convergence to $\Xi$; see 
\cite[section~5]{MehV20} for more details.

\subsection{An HEC-based algorithm for computing the continuous-time~$\Xi$}
\label{sec:hec_alg_cont}
By~\cref{thm:root_min_cont}, $\Xi$ defined in \eqref{eq:sup_cont} can 
instead be computed via this root-min problem:
\beq
	\label{eq:root_min_cont}
	\text{Determine the}~\xi \in \Done: \qquad 
	f(\xi) 
	= \min_{\omega \in \Dtwo} \gamma(\xi,\omega) 
	= \min_{\omega \in \Dtwo} \gamma_\xi(\omega) 
	= 0,
\eeq
where $\Done = [\Xilb,\Xiub]$, $\Dtwo \subset \R$, and 
the function $\gamma$ and its associated restriction $\gamma_\xi$ are defined in~\eqref{eq:gamma_cont}.
If~\mbox{$\xi \neq \Xi\ub$}, 
by~\cref{cor:zeros_cont,thm:root_min_cont},
$\Dtwo$ can be taken to be compact, 
since for all other values of $\xi \in \Done$,
$\gamma_\xi$ has at most $2n$ zeros, which are all finite,
and minimizers of~$\gamma_\xi$ that occur 
where $\gamma_\xi$ is negative clearly must lie
between these zeros.  
Since $\alpha(A_\xi) < 0$ for all~$\xi < \Xi\ub$,
it follows that $\gamma_\xi$ is also bounded below (and above) for all~$\xi < \Xiub$.
Thus, \eqref{eq:root_min_cont} meets the criteria to be a valid root-min problem,
and so we can use HEC to find pseudoroots of it.

\begin{remark}
\label{rem:root_min_init}
For a root-min problem, the initialization requirements for HEC, 
using the notation of~\eqref{eq:root_max} and \cref{alg:hec},
are $\eps\lb,\eps_0 \in \Done$ and $x_0 \in \Dtwo$ 
such that 
$f(\eps_0) \leq g(\eps_0,x_0) \leq 0 < f(\eps\lb)$.
For computing $\Xi$, 
it will also be more convenient to use the convention that 
\mbox{$f(\eps_0) \leq g(\eps_0,x_0) < 0 \leq f(\eps\lb)$} holds.
\end{remark}

\begin{remark}
\label{rem:not_zero}
We wish to compute $\Xi$ to a desired relative accuracy determined by 
a relative error tolerance $\tau \in (0,1)$.  
However, if~$\Xi = 0$ happens to hold, the relative error is undefined.
In this case, our algorithm instead automatically estimates $\Xi$ to within an absolute error
determined by $\tau$.  
In practice, one could use two parameters to respectively determine
acceptable accuracy in relative and absolute senses.   
For sake of simplicity, we assume that $\Xi \neq 0$ in order
to avoid having to refer to both relative and absolute errors.
\end{remark}

First note that $0 \leq f(\Xi\lb)$ always holds.
If we have a $\xi_0 \in [\Xilb,\Xiub)$ and \mbox{$\omega_0 \in \R$} 
such that $\gamma(\xi_0,\omega_0) <  0$, 
then the initialization conditions of HEC are met,
and so HEC can be used to compute a 
pseudoroot $(\tilde\xi,\tilde\omega)$ of~\eqref{eq:root_min_cont}
with both~$\tilde \xi \in [\Xilb,\xi_0)$ and~$\Xi \leq \tilde \xi$ holding.
To determine whether the estimate~$\tilde\xi$ is sufficiently
close to~$\Xi$, we do the following.
For our tolerance $\tau$,
we set \mbox{$\xi = \tilde \xi - \tau |\tilde \xi|$} (we assume that~$\Xilb < \xi$, as otherwise we are done)
and then compute the real eigenvalues of~\mbox{$M_\xi - \lambda N$}.
If this matrix pencil has no real eigenvalues, then by~\cref{thm:zeros_cont}, 
we know that $\gamma_\xi$ has no zeros, 
and so by~\cref{thm:root_min_cont}, $\xi < \Xi$ must hold.
Thus,~$\Xi \in (\tilde\xi,\xi]$, and so~$\xi$ must agree 
with~$\Xi$ to the desired number of digits.\footnote{Note that 
when $\tilde \xi$ is only sufficiently close to $\Xi$ (and not equal), 
it does not necessarily follow that $\tilde\omega$ is close to the 
global minimizer(s) of~$\gamma_\Xi$.}
Otherwise, if~$M_\xi - \lambda N$ does have real eigenvalues,
then~$\gamma_\xi$ has zeros, and if~$\gamma_\xi$ is negative
on at least one of the intervals derived from these zeros,
then~$\Xi < \xi$ holds by~\cref{thm:root_min_cont}.
Updating~$\omega_0$ to be the midpoint of one of these intervals
where~$\gamma_\xi$ is negative, obviously~$\gamma_\xi(\omega_0) < 0$ holds,
and so HEC can be restarted to find a new pseudoroot~$(\hat\xi,\hat\omega)$ of~\eqref{eq:root_min_cont}
with \mbox{$\hat\xi \in [\Xi,\xi)$}.
This process of running HEC and computing the real eigenvalues of~\mbox{$M_\xi - \lambda N$}
to find regions where~$\gamma_\xi$ is negative
is repeated in a loop until estimate~$\xi$, which is decreasing monotonically, 
becomes sufficiently close to~$\Xi$.

For initializing our new algorithm, it is required that we always choose $\xi < \Xiub$; \eg
evaluating $\T_\xi(\imath \omega)$ requires solving linear systems with $\imath \omega I - A_\xi$,
but this matrix may not always be invertible when $\xi=\Xiub$.
Choosing $\xi_0 = \Xiub -\tau|\Xiub|$ as our first estimate suffices, 
as this still allows us to obtain $\Xi$ to the desired accuracy. 
Again, we assume that~$\Xilb < \xi_0$, as otherwise we are done.  
The user provides some $\omega_0 \in \R$ as an initial guess
for HEC\@.  
If $\gamma_{\xi_0}(\omega_0) < 0$ holds, then our algorithm
as described above can begin.
Otherwise, we must find another point where~$\gamma_{\xi_0}$ is negative.
This can be done in multiple ways.  We can evaluate~$\gamma_{\xi_0}$ 
on a grid or randomly chosen points.  
We could also initialize some optimization solver at these points
to try to find a minimizer~$\tilde \omega$ of~$\gamma_{\xi_0}$ 
such that $\gamma_{\xi_0}(\tilde \omega) < 0$.
If, after some reasonable amount of effort, such a point
has not been found, we then resort to computing the eigenvalues of~$M_{\xi_0} - \lambda N$
in order to obtain all the zeros of $\gamma_{\xi_0}$.
Then, as described above, we can determine if there exists a point where $\gamma_{\xi_0}$ is negative.
Since evaluating~$\gamma_{\xi_0}(\omega)$ for a given value of $\omega$ is much cheaper than computing the eigenvalues 
of~$M_{\xi_0} - \lambda N$ (we elaborate on this momentarily), it is generally beneficial in terms of the overall runtime to first
try a decent number of points, possibly with optimization.
This also increases the chances that the first pseudoroot~$(\tilde \xi, \tilde \omega)$ of~\eqref{eq:root_min_cont}
found by HEC also provides its root, \ie \mbox{$\tilde \xi = \Xi$}; in this case, our algorithm only computes 
the eigenvalues of $M_\xi - \lambda N$ for a single value of $\xi$.
In contrast, recall that the earlier algorithm of Mehrmann and Van Dooren 
(described at the end of~\cref{sec:cont_mvd}), on every iteration,  
requires computing the eigenvalues of~$M_\xi - \lambda N$ plus 
the eigenvalues of a second related matrix pencil with the same order~\mbox{($2n + m$)}.

\begin{algfloat}[!t]
\begin{algorithm}[H]
\caption{HEC-based algorithm for continuous-time $\Xi$}
\label{alg:xi_cont}
\begin{algorithmic}[1]
	\setcounter{ALC@unique}{0}
	\REQUIRE{ 
		$\M$, $\omega_0 \in \R$, $\tau \in (0,1)$, $\Xilb$ \eqref{eq:xilb_cont}, and $\Xiub$ \eqref{eq:xiub_cont}
	}
	\ENSURE{ 
		$\xi$ such that $| \Xi - \xi | \leq \tau |\Xi|$ for continuous-time $\Xi$ for $\M$
		\\ \quad
	}
	
	\STATE $\xi \gets \Xiub - \tau | \Xiub|$
	\IF { $\xi \leq \Xilb $ }
		\RETURN
	\ENDIF
	\STATE \texttt{find\_negative} $\gets \gamma_\xi(\omega_0) \geq 0 $  \COMMENT {a boolean variable}
	\WHILE { true }
		\IF { \texttt{find\_negative} }
			\STATE $\Omega \gets \{ \omega \in \R : \det ( M_\xi - \omega N ) = 0 \}$
			\STATE $\Omega \gets \{ \omega \in \Omega : \gamma_\xi(\omega) = 0 \}$
			\IF { $\exists \omega_1, \omega_2 \in \Omega$ s.t. $\omega_1 < \omega_2$ and $\gamma_\xi(w) < 0 \ \forall \omega \in (\omega_1,\omega_2)$ }
				\STATE $\omega_0 \gets 0.5(\omega_1 + \omega_2)$  \COMMENT {\emph{(C3)} does not hold}
			\ELSE
				\RETURN  \COMMENT {$\gamma_\xi(\omega) \geq 0 \ \forall\omega \in \R$ 
						and $\xi \approx \Xi$ to tolerance}
			\ENDIF 
		\ENDIF
		\STATE \COMMENT {$\gamma_\xi(\omega_0) < 0$ and $\Xi \in [\Xilb,\xi)$ so run HEC with this initial data}
		\STATE $(\tilde \xi,\tilde \omega) \gets$ a pseudoroot of \eqref{eq:root_min_cont} obtained by HEC with $\Xi \leq \tilde \xi < \xi$
		\STATE $\xi \gets \tilde \xi - \tau |\tilde \xi| $ 
		\STATE \texttt{find\_negative} $\gets$ \texttt{true}
	\ENDWHILE
\end{algorithmic}
\end{algorithm}
\vspace{-0.4cm}
\algnote{Per~\cref{rem:not_zero}, we assume that $\Xi \neq 0$, and so at termination,
$\xi$ will agree with $\Xi$ to the desired relative tolerance~$\tau$.
When the matrices defining $\M$ are all real, there is symmetry, 
\ie $\gamma(\xi,-\omega) = \gamma(\xi,\omega)$, 
and so the search domain for $\omega$ can be reduced from~$\R$ to~\mbox{$\omega \in [0,\infty)$}.
While taking advantage of this symmetry does not affect the asymptotic work complexity,
it can nevertheless reduce the constant factors to speed up the overall run time.
}
\end{algfloat}

Pseudocode of our new algorithm for continuous-time~$\Xi$ is given in~\cref{alg:xi_cont}.  
In practice, we observe that HEC is only restarted a handful of times, often just one.
By construction of the valid data to initialize HEC on every iteration of~\cref{alg:xi_cont}, 
it follows from~\cref{thm:hec_converge} that HEC is indeed
guaranteed to compute a pseudoroot of~\eqref{eq:root_min_cont}
on every iteration.
Under mild assumptions that generally hold in practice,
we now show that~\cref{alg:xi_cont} has local quadratic
convergence to pseudoroots of~\eqref{eq:root_min_cont}.

\begin{theorem}[Quadratic convergence of~\cref{alg:xi_cont}]
\label{thm:cont_quad}
Let $(\tilde \xi, \tilde \omega)$ be any pseudoroot of~\eqref{eq:root_min_cont} computed
by HEC within~\cref{alg:xi_cont}.  If
\begin{enumerate}[label=(\roman*),font=\normalfont]
\item after some point, HEC only generates iterates that lie on a single open continuous path
	$\omega_\mathrm{p} : \R \to \R$ of stationary points of $\gamma_\xi$ as $\xi$ varies,
\item $\tilde \omega = \omega_\mathrm{p}(\tilde \xi)$ is a local maximizer of $\gamma_{\tilde \xi}$,
\item $\gamma_{\tilde \omega}^\prime(\tilde \xi) \neq 0$,
\item $\gamma$ is twice continuously differentiable at $(\tilde \xi,\tilde \omega)$, and 
\item $\omega_\mathrm{p}$ is twice continuously differentiable at $\tilde \xi$,
\end{enumerate}
all hold, then~\cref{alg:xi_cont} converges Q-quadratically to the pseudoroot~$(\tilde \xi, \tilde \omega)$.
Furthermore, if \emph{(i)--(iii)} hold and $\gamma(\tilde \xi, \tilde \omega)$ corresponds to a 
simple eigenvalue of~$\Phi_{\tilde \xi}(\imath \tilde \omega)$ and 
$\gamma_{\tilde \xi}^{\prime\prime}(\tilde \omega) \neq 0$, 
then \emph{(iv)} and~\emph{(v)} are automatically satisfied. 
\end{theorem}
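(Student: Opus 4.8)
The plan is to settle the two assertions in turn. For the quadratic-convergence claim I would recognize each HEC run inside \cref{alg:xi_cont} as an instance of \cref{alg:hec} and reduce directly to \cref{thm:hec_quad}. Concretely, recast the root-min problem \eqref{eq:root_min_cont} as a root-max problem by putting $g(\xi,\omega) := -\gamma(\xi,\omega)$, so that $\max_\omega g_\xi = -\min_\omega \gamma_\xi = -f$ shares the root $\Xi$ and the local maximizers of $g_\xi$ are exactly the local minimizers of $\gamma_\xi$ produced by the expansion phase. Under this substitution each HEC run within \cref{alg:xi_cont} is precisely \cref{alg:hec} applied to $g$, and \cref{asm:subs} holds by hypothesis. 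It then remains to verify that (i)--(v) supply the hypotheses of \cref{thm:hec_quad}: condition (i) furnishes the single open continuous path $\omega_\mathrm{p}$ of stationary points and, together with $\xi_k \to \tilde\xi$ from \cref{thm:hec_converge} and continuity of $\omega_\mathrm{p}$, forces $\{\omega_k\}$ to have the unique cluster point $\tilde\omega = \omega_\mathrm{p}(\tilde\xi)$; condition (ii) supplies the local-maximizer requirement on $g_{\tilde\xi} = -\gamma_{\tilde\xi}$ (equivalently, $\tilde\omega$ is a local minimizer of $\gamma_{\tilde\xi}$, as produced by the minimizing expansion phase); condition (iii) gives $g_{\tilde\omega}^\prime(\tilde\xi) = -\gamma_{\tilde\omega}^\prime(\tilde\xi) \neq 0$; and (iv)--(v) are exactly the twice-continuous-differentiability of $g=-\gamma$ at $(\tilde\xi,\tilde\omega)$ and of $\omega_\mathrm{p}$ at $\tilde\xi$. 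Invoking \cref{thm:hec_quad} then yields the Q-quadratic convergence of $\{\xi_k\}$ to $\tilde\xi$, which is the first assertion.

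For the ``furthermore'' claim, the goal is to deduce the smoothness conditions (iv) and (v) from the simplicity of the active eigenvalue and from $\gamma_{\tilde\xi}^{\prime\prime}(\tilde\omega) \neq 0$. I would first record that any pseudoroot computed by \cref{alg:xi_cont} satisfies $\tilde\xi \in [\Xi,\Xiub)$, so condition (C1) holds at $\tilde\xi$; hence $A_{\tilde\xi}$ has no imaginary-axis eigenvalue, and $(\imath\omega I_n - A_\xi)^{-1}$, and therefore $\T_\xi(\imath\omega)$ and the Hermitian matrix $\Phi_\xi(\imath\omega)$, is a real-analytic function of $(\xi,\omega)$ on a neighborhood of $(\tilde\xi,\tilde\omega)$. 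Because $\gamma(\tilde\xi,\tilde\omega)$ is a \emph{simple} eigenvalue of $\Phi_{\tilde\xi}(\imath\tilde\omega)$, analytic perturbation theory for Hermitian matrices (\eg Rellich--Kato) shows that the isolating spectral gap persists and that the corresponding eigenvalue branch is real-analytic nearby; as that branch remains the smallest eigenvalue locally, $\gamma = \lambda_{\min}(\Phi_\xi(\imath\omega))$ coincides with it and is thus $C^\infty$ (in particular twice continuously differentiable) at $(\tilde\xi,\tilde\omega)$, giving (iv).

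For (v) I would apply the implicit function theorem to the stationarity equation $F(\xi,\omega) := \gamma_\xi^\prime(\omega) = 0$ that defines $\omega_\mathrm{p}$. By the previous step $\gamma$ is $C^\infty$ near $(\tilde\xi,\tilde\omega)$, so $F$ is $C^\infty$; moreover $F(\tilde\xi,\tilde\omega) = \gamma_{\tilde\xi}^\prime(\tilde\omega) = 0$ since $\tilde\omega$ is a stationary point of $\gamma_{\tilde\xi}$, while $\partial F / \partial\omega$ at $(\tilde\xi,\tilde\omega)$ equals $\gamma_{\tilde\xi}^{\prime\prime}(\tilde\omega) \neq 0$ by hypothesis. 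The implicit function theorem then produces a locally unique $C^\infty$ map $\xi \mapsto \omega_\mathrm{p}(\xi)$ with $\omega_\mathrm{p}(\tilde\xi) = \tilde\omega$, which by uniqueness coincides with the path in condition (i); its $C^\infty$ regularity is more than enough to give the required twice continuous differentiability, establishing (v).

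The main obstacle is securing the smoothness of $\gamma = \lambda_{\min}(\Phi_\xi(\imath\omega))$, since the smallest eigenvalue of a Hermitian matrix is in general only Lipschitz and fails to be differentiable exactly where eigenvalues coalesce. The entire argument hinges on combining the simplicity hypothesis with the stability-induced analyticity of $\Phi_\xi(\imath\omega)$ to guarantee that a single analytic eigenvalue branch is active throughout a full neighborhood of $(\tilde\xi,\tilde\omega)$; once this is in place, (iv) is immediate and (v) reduces to the routine implicit-function computation above.
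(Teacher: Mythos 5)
Your proof is correct and takes essentially the same route as the paper's: the quadratic-convergence claim is exactly what the paper dispatches as a ``translation'' of \cref{thm:hec_quad} to \eqref{eq:root_min_cont} (your $g=-\gamma$ recasting just makes it explicit), and the ``furthermore'' part likewise proceeds by noting that simplicity of the eigenvalue makes $\gamma$ analytic near $(\tilde\xi,\tilde\omega)$ and then applying the implicit function theorem to $\tfrac{\partial}{\partial\omega}\gamma=0$, whose $\omega$-derivative at the pseudoroot is $\gamma_{\tilde\xi}^{\prime\prime}(\tilde\omega)\neq 0$. The only point worth noting is that your sign flip reads condition (ii) as $\tilde\omega$ being a local minimizer of $\gamma_{\tilde\xi}$ (\ie a maximizer of $-\gamma_{\tilde\xi}$), which is the correct translation for this root-min setting, where the expansion phases minimize $\gamma_\xi$, despite the theorem's literal wording.
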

\begin{proof}
Conditions (i)--(v) implying the quadratic convergence of HEC is simply
a translation of~\cref{thm:hec_quad} to the setting of~\eqref{eq:root_min_cont}.
For the second part of the theorem, if $\gamma(\tilde \xi, \tilde \omega)$ corresponds
to a simple eigenvalue, then $\gamma$ is analytic near 
$(\tilde \xi, \tilde \omega)$.  
Defining the function $g(\xi,\omega) \coloneqq \tfrac{\partial}{\partial \omega} \gamma(\xi,\omega)$,
the path~$\omega_\mathrm{p}$ of stationary points of the functions~$\gamma_\xi$ as $\xi$ varies 
can be characterized by the 
equality~$g(\xi,\omega) = 0$ in a neighborhood about the pseudoroot~$(\tilde \xi, \tilde \omega)$.
Since $g$ is analytic, if $\tfrac{\partial g}{\partial \omega}|_{(\tilde \xi, \tilde \omega)} \neq 0$, 
it follows from the implicit function theorem that we can rewrite our characterization as $g(\xi,\omega_\mathrm{p}(\xi)) = 0$, 
where~$\omega_\mathrm{p}$ is analytic near~$\tilde \xi$.
\end{proof}

Under the smoothness assumptions of~\cref{thm:cont_quad},
in a neighborhood of the pseudoroot in question, 
the contraction and expansion phases within HEC can also be solved 
with fast convergence rates.  
Moreover, even if these assumptions do not hold, 
an extension of the analysis of Boyd and Balakrishnan~\cite{BoyB90} shows that
near any minimizer $\omega$, $\gamma_\xi$ is twice continuously differentiable with Lipschitz
second derivative, even if the minimizer is associated with an eigenvalue of~$\Phi_\xi(\imath \omega)$ 
of multiplicity greater than one; for more details, see~\cite{MitO22}.
Thus, the expansion phases can always be solved quickly using secant or Newton's method,
and any use of the early contraction technique discussed in~\cref{rem:hec_roc} 
should be limited, \eg only in initial iterations when one cannot
necessarily expect to be sufficiently close to the fast convergence regime.

For the contraction and expansion phases,
we now describe how to compute the first and second derivatives 
of the functions~$\gamma_\xi$ and $\gamma_\omega$ defined in~\eqref{eq:gamma_cont}.
Given a simple eigenvalue of a Hermitian matrix~$H$ depending on a parameter $t \in \R$,
formulas for the first and second derivatives of 
that eigenvalue can be found in, \eg \cite{Lan64,OveW95}. 
The matrix derivatives~$H^\prime$ and~$H^{\prime\prime}$ appear in these formulas, so we give below the
first and second matrix derivatives of both~$\Tc$ and~$\Phi_\xi(\imath \omega)$ with respect to $\xi$ and $\omega$.
Via applications of standard matrix derivative formulas and the chain rule,
we have that
\begin{subequations}
\label{eq:T_derivs_cont}
\begin{align}
	\dxi \Tc	&= \tfrac{1}{2}( Z_2 - I_m ), &
	\dxi[2] \Tc 	&= \tfrac{1}{2} Z_3,	\\
	\dw \Tc	&= -\imath Z_2,  &
	\dw[2] \Tc	&= -2 Z_3,
\end{align}
\end{subequations}
where~\mbox{$Z_k \coloneqq C((\imath \omega - \tfrac{\xi}{2})I_n - A)^{-k}B$}, 
and so
\begin{subequations}
\label{eq:Phi_derivs_cont}
\begin{align}
	\dxi \Pc	&= \tfrac{1}{2}(Z_2 + Z_2\ct) - I_m, &
	\dxi[2] \Pc 	&= \tfrac{1}{2}(Z_3 + Z_3\ct), \\
	\dw \Pc	&= -\imath Z_2 + \imath Z_2\ct,  &
	\dw[2] \Pc	&= -2(Z_3 + Z_3\ct).
\end{align}
\end{subequations}
Following a technique of Laub~\cite{Lau81}, we can compute 
the Hessenberg form $A=UHU\ct$, where~$U$ is unitary and~$H$ is Hessenberg, 
and then substitute it into~$Z_1$, which yields~$Z_1 = CU((\imath \omega - \tfrac{\xi}{2})I_n - H)^{-1}U\ct B$.
Computing~$U$ and~$H$ is~$\bigO(n^3)$ work but only needs to be done once.  
Thereafter, the inverses appearing in~$Z_k$ can actually be
applied to a vector in just~$\bigO(n^2)$ work since changing the values of~$\xi$ and $\omega$
cannot cause the Hessenberg structure to be lost.
Thus,~$\Pc$ and its matrix derivatives given in~\eqref{eq:Phi_derivs_cont} can 
be obtained in~\mbox{$\bigO(mn^2 + m^2n)$} work.  Using the convention that 
computing the eigenvalues and eigenvectors of a matrix is an atomic operation with cubic complexity,
the total cost to evaluate $\gamma(\xi,\omega)$ and its first and second derivatives with respect to 
$\xi$ and~$\omega$ is~$\bigO(mn^2 + m^2n + m^3)$ work.  

The cost of~\cref{alg:xi_cont} is dominated by 
computing the zeros of~$\gamma_\xi$.
Since HEC generally converges quickly, as do its expansion and contraction phases,
we can consider that the total number of evaluations of the function $\gamma$ is bounded by a constant.
Hence, in~\cref{alg:xi_cont}, HEC does \mbox{$\bigO(mn^2 + m^2n + m^3)$} work.
Meanwhile, finding the zeros of~$\gamma_\xi$ involves computing 
all the eigenvalues of~$M_\xi - \lambda N$, which itself is $\bigO((n+m)^3)$ work.
Thus, for all but the smallest values of $n$, the HEC portion of~\cref{alg:xi_cont}
should only be a fraction of the cost to compute the eigenvalues of~$M_\xi - \lambda N$.

The ``improved" algorithm of Mehrmann and Van Dooren has the same asymptotic work complexity as our method, 
but the hidden constant factor for their algorithm is much larger.
This is partly because on each iteration, their algorithm solves two large 
eigenvalue problems of order~$2n+m$.
However, it also often requires more iterations than~\cref{alg:xi_cont} does.  
While Mehrmann and Van Dooren did not analyze the convergence properties
of their method, our new framework of root-max problems and HEC also shows 
that their method converges at least Q-superlinearly under generic conditions.
To see this, note that on each iteration,
their method computes a single point where $\gamma_\xi$ is negative (as opposed to finding a local minimizer),
but in the limit, these single points do converge to a minimizer of~$\gamma_\xi$ as~$\xi \to \Xi$.
In other words, their algorithm can also been seen as an HEC iteration with a very aggressive early contraction scheme.
Per~\cref{rem:hec_roc}, such an iteration converges at least superlinearly.

\section{Discrete-time passive systems} 
\label{sec:disc}
We now present the discrete-time analogues of the optimization problem 
and our new algorithm given in~\cref{sec:cont}.
We reuse the continuous-time notation from~\cref{sec:cont}.
for the discrete-time setting since the different meanings should be clear from the context, and it 
allows us to generically refer to either setting when needed.

\subsection{Passivity of discrete-time proper parametric systems}
The material in this subsection closely follows \cite{MehV20a}.
For $z \in \C$, we now consider the rational matrix function
\[  
	\Phi(z) \coloneqq \T^{\mathsf{H}}(z^{-1}) + \T(z), 
\] 
which coincides with twice the Hermitian part of $\T$ on the  unit circle:
\[ 
	\Phi(\eul^{\imath \omega})=[\T(\eul^{\imath \omega})]^\mathsf{H} + \T(\eul^{\imath \omega}).
\] 

\begin{definition}
\label{def:pass_disc}
The discrete-time transfer function $\T$ is 
\begin{enumerate}
\item {\em passive} if $\Phi(\eul^{\imath \omega})\succeq 0$ for all $\omega\in (-\pi,\pi]$ and $\rho(A) \leq 1$, 
\ie its eigenvalues are in the closed unit disk, with any occurring on the unit circle being semi-simple and with a transfer function residue that is Hermitian and positive semi-definite,
\item {\em strictly passive}  if $\Phi(\eul^{\imath \omega})\succ 0$ for all $\omega\in (-\pi,\pi]$
and $\rho(A) < 1$.
\end{enumerate}
\end{definition}
The necessary and sufficient conditions for passivity in the discrete-time case (see~\cite{MehV20a}) now make use of the linear matrix function
\begin{equation} \label{prlz}
W_\mathrm{d}(X,\M)  \coloneqq  \left[
\begin{array}{ccc} X & XA & XB \\
A^{\mathsf{H}}X & X & C^{\mathsf{H}} \\
B^{\mathsf{H}}X  & C & D^{\mathsf{H}}+D 
\end{array}
\right].
\end{equation}

\begin{theorem}
\label{thm:char_disc}
Let $\M \coloneqq \{A,B,C,D\}$ be a discrete-time minimal system and let its transfer function~$\T$ thus be proper.
Then  $\T$ is (strictly) passive if and only there exists an~$X\in \Hn$ such that $X \succ 0$
and $W_\mathrm{d}(X,\M)\succeq 0$ ($W_\mathrm{d}(X,\M)\succ 0$).
\end{theorem}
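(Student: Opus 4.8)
The plan is to recognize this as the discrete-time positive real (Kalman--Yakubovich--Popov) lemma and to prove it by mirroring the structure of its continuous-time analogue~\cref{thm:char_disc}'s counterpart~\cref{thm:xi_char}, drawing on the dissipativity framework of Willems~\cite{Wil72a}. The first step is a Schur-complement reduction: whenever $X \succ 0$, eliminating the leading $X$ block of $W_\mathrm{d}(X,\M)$ shows that $W_\mathrm{d}(X,\M) \succeq 0$ (respectively $\succ 0$) holds if and only if
\[
	R(X) \coloneqq \begin{bmatrix} X - A^{\mathsf{H}}XA & C^{\mathsf{H}} - A^{\mathsf{H}}XB \\ C - B^{\mathsf{H}}XA & D^{\mathsf{H}}+D - B^{\mathsf{H}}XB \end{bmatrix} \succeq 0 \quad (\text{respectively } \succ 0).
\]
This converts the dilated linear matrix inequality into the standard dissipation-inequality form, which I would use throughout.

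For sufficiency I would first establish the frequency-domain condition via the algebraic identity that, for every $z$ on the unit circle with $z \notin \Lambda(A)$,
\[
	\Phi(z) = \begin{bmatrix} (zI_n - A)^{-1}B \\ I_m \end{bmatrix}^{\mathsf{H}} R(X) \begin{bmatrix} (zI_n - A)^{-1}B \\ I_m \end{bmatrix}.
\]
The key computation is that $A(zI_n-A)^{-1}B + B = z(zI_n-A)^{-1}B$, so the two terms that are purely quadratic in $X$ cancel because $|z|^2 = 1$, while the cross terms collapse to $\T(z)^{\mathsf{H}} + \T(z) = \Phi(z)$ using $C(zI_n-A)^{-1}B = \T(z) - D$. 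Granting $R(X) \succeq 0$, this identity immediately yields $\Phi(\eul^{\imath \omega}) \succeq 0$ (strictly, in the strict case) at every $\omega$ that is not a pole.

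The pole conditions of~\cref{def:pass_disc} then follow from the $(1,1)$ principal block of $R(X)$: as a principal submatrix of a positive semidefinite matrix it satisfies $X - A^{\mathsf{H}}XA \succeq 0$, and together with $X \succ 0$ this is a Stein inequality forcing $\rho(A) \le 1$. Semisimplicity of any unit-circle eigenvalue, and positive semidefiniteness of the associated residue, I would extract from a short kernel argument: an eigenvector $v$ with $Av = \lambda v$, $|\lambda| = 1$, satisfies $v^{\mathsf{H}}(X - A^{\mathsf{H}}XA)v = (1-|\lambda|^2)\,v^{\mathsf{H}}Xv = 0$, so $v \in \ker(X-A^{\mathsf{H}}XA)$; testing against a putative generalized eigenvector then contradicts $v^{\mathsf{H}}Xv > 0$, ruling out Jordan blocks. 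In the strict case $X - A^{\mathsf{H}}XA \succ 0$ gives $\rho(A) < 1$ outright, so there are no unit-circle poles and the strict frequency condition holds everywhere on the circle.

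For necessity I would invoke Willems' dissipativity theory~\cite{Wil72a}: with supply rate $\tfrac12(y_k^{\mathsf{H}}u_k + u_k^{\mathsf{H}}y_k)$, passivity is exactly finiteness of the available storage, minimality makes it a positive-definite quadratic form $x^{\mathsf{H}}Xx$, and its defining $X$ satisfies $R(X) \succeq 0$ (strictly, for strict passivity). Equivalently and more constructively, one can spectrally factor $\Phi(z) = M^{\mathsf{H}}(z^{-1})M(z)$ and read $X$ off the stabilizing solution of the associated discrete algebraic Riccati equation. The main obstacle is precisely the boundary case of necessity, passive but not strictly passive, where $\Phi$ may be singular at points of the unit circle and $A$ may carry semisimple eigenvalues there; here the spectral factorization and Riccati solvability are delicate, and it is the minimality (controllability and observability) of $\M$ that must be used to guarantee a \emph{positive-definite} $X$. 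For these classical facts I would rely on the detailed treatment in~\cite{MehV20a,Wil72a} rather than reproving them in full.
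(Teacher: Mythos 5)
You should note at the outset that the paper does not prove \cref{thm:char_disc} at all: it is quoted as a known result (the discrete-time Kalman--Yakubovich--Popov/positive-real lemma) with a pointer to \cite{MehV20a}, just as its continuous-time counterpart \cref{thm:xi_char} is quoted from \cite{Wil72a}. So there is no in-paper argument to compare against; measured against the classical proof, your reconstruction is sound. The Schur-complement reduction of $W_\mathrm{d}(X,\M)$ to $R(X)$ is exactly right, and your sufficiency identity checks out: with $F \coloneqq (zI_n - A)^{-1}B$ one has $AF + B = zF$, so on $|z| = 1$ the terms quadratic in $X$ cancel and the form collapses to $(CF + D)^{\mathsf{H}} + (CF + D) = \Phi(z)$, using $\T^{\mathsf{H}}(z^{-1}) = [\T(z)]^{\mathsf{H}}$ on the unit circle; strictness transfers because $\begin{bsmallmatrix} F \\ I_m \end{bsmallmatrix}$ has full column rank, and the Stein inequality from the $(1,1)$ block of $R(X)$ gives $\rho(A) \le 1$ (respectively $< 1$). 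Your Jordan-block exclusion also closes in one line: if $Av = \lambda v$ with $|\lambda| = 1$, then $v \in \ker M$ for $M \coloneqq X - A^{\mathsf{H}}XA \succeq 0$, and for a putative generalized eigenvector $w$ with $Aw = \lambda w + v$ one computes $v^{\mathsf{H}} M w = -\bar{\lambda}\, v^{\mathsf{H}} X v$, which must vanish since $Mv = 0$ and $M$ is Hermitian, contradicting $X \succ 0$.

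The two places you delegate to the literature are exactly where the hard content lies, and this is consistent with the paper's own treatment, but you should be aware of what remains open in your sketch. First, the positive semidefiniteness of the residue at a semisimple unit-circle pole, required by \cref{def:pass_disc}, does not follow from the kernel relations you state: they yield $A^{\mathsf{H}}(Xv) = \bar{\lambda}\,(Xv)$ for eigenvectors $v$, but connecting this to the transfer function residue $C\Pi B$ ($\Pi$ the spectral projector) needs an additional limiting or projector computation, e.g., examining $\Phi(e^{\imath\omega}) \succeq 0$ on both sides of the pole. Second, the necessity direction in the boundary case (passive but not strictly passive, with semisimple unit-circle poles) is the genuinely delicate part of the theorem --- finiteness of the available storage and, via minimality, its positive definiteness --- and you correctly flag rather than prove it. As a contribution to this paper, outsourcing both to \cite{Wil72a,MehV20a} is legitimate, since the paper itself cites rather than proves this statement; as a standalone proof, those two steps would need to be written out.
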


In \cite{MehV20a}, the following class of parametric systems was considered:
\begin{subequations}
	\label{eq:lti_xi_disc}
	\begin{align}
	\M_\xi \coloneqq {}& \{A_\xi,B_\xi,C_\xi,D_\xi\} =
	\left\{ \tfrac{A}{1-\xi}, \tfrac{B}{1-\xi}, \tfrac{C}{1-\xi}, \tfrac{D-\xi I_m}{1-\xi} \right\}, \\
	\T_\xi(z) \coloneqq {}&  C_\xi(zI_n \! -\! A_\xi)^{-1}B_\xi+D_\xi 
		= \tfrac{1}{1 - \xi} \left( C((1\! - \! \xi)zI_n-A)^{-1}B+ D - \xi I_m \right), \\
	\Phi_\xi(z) \coloneqq {}& \T_\xi^{\mathsf{H}}(z^{-1}) + \T_\xi(z).
	\end{align}
\end{subequations}
where $\xi\in(-\infty,1)$ and it is again important to compute for which values of $\xi$ 
these systems are passive or strictly passive.  
Similar to the continuous-time case, 
it was shown in \cite{MehV20a} that the smallest perturbation
$\Delta_\M$ such that the perturbed model~\mbox{$\M + \Delta_\M$} loses passivity
depends on this scalar parameter $\xi$. 
It is therefore important to compute the values of $\xi$
for which these parametric systems are passive or strictly passive.
The following theorem was given 
in~\cite{MehV20a}, in a slightly modified form; we omit its proof as it is similar to 
that of~\cref{thm:xi_cont}.
Note that strict passivity of~$\T$ again implies  regularity of the rational matrix function~$\Phi$, 
since it is invertible for any point on the unit circle.
Moreover,~$\Phi_\xi$ is then also regular for almost all $\xi$ since it is an analytic perturbation of $\Phi$.

\begin{theorem} \label{thm:suboptimal}
Let $\M \coloneqq \{A,B,C,D\}$ be a discrete-time minimal system and let its transfer function~$\T$ thus be proper.
Then, for $\xi \in (-\infty,1)$, the parametric system $\M_\xi$ with transfer function $\T_\xi$,
as defined in \eqref{eq:lti_xi_disc}, is also minimal and
\beq
	\label{eq:sup_disc}
	\Xi \coloneqq \sup_{-\infty < \xi < 1} \left\{\xi : \T_\xi \text{ is strictly passive}\right\}
	= \max_{-\infty < \xi < 1} \left\{\xi : \T_\xi \text{ is passive}\right\}
\eeq
is bounded.
Moreover, $\T_\xi$ is strictly passive for $\xi \in (-\infty,\Xi)$, passive but not strictly passive for $\xi = \Xi$, and not passive 
for $\xi \in (\Xi, 1)$.
\end{theorem}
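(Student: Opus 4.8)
The plan is to follow the proof of \cref{thm:xi_cont} line for line, substituting the discrete-time characterization of \cref{thm:char_disc} for the continuous-time one and the rational parametrization \eqref{eq:lti_xi_disc} for the affine one. First I would verify that $\M_\xi$ is minimal for every $\xi \in (-\infty,1)$. As $1-\xi \neq 0$, for each $\lambda \in \C$ we have
\[
	\rank \begin{bmatrix} \lambda I_n - A_\xi & B_\xi \end{bmatrix}
	= \rank \tfrac{1}{1-\xi}\begin{bmatrix} (1-\xi)\lambda I_n - A & B \end{bmatrix}
	= \rank \begin{bmatrix} \mu I_n - A & B \end{bmatrix},
\]
where $\mu \coloneqq (1-\xi)\lambda$ sweeps out all of $\C$ as $\lambda$ does; hence controllability of $(A,B)$ transfers to $(A_\xi,B_\xi)$, and observability transfers by the same computation applied to $(A^\mathsf{H},C^\mathsf{H})$. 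Where the continuous-time argument used that adding $\tfrac{\xi}{2}I_n$ to $A$ is a mere shift of the spectral variable, here the corresponding operation is a mere scaling.

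The crux is to recast discrete-time passivity as a linear matrix inequality that depends monotonically on $\xi$. Multiplying the definition \eqref{prlz} of $W_\mathrm{d}(X,\M_\xi)$ by the positive scalar $1-\xi$ and collecting the $\xi$-dependent terms, I expect the identity
\[
	(1-\xi)\, W_\mathrm{d}(X,\M_\xi) = W_\mathrm{d}(X,\M) - \xi\, \diag(X,X,2I_m),
\]
which is verified by comparing the four distinct blocks using \eqref{eq:lti_xi_disc}. Because $1-\xi > 0$, \cref{thm:char_disc} then yields that $\T_\xi$ is passive (strictly passive) if and only if there is an $X \succ 0$ with $W_\mathrm{d}(X,\M) \succeq \xi\, \diag(X,X,2I_m)$ (respectively with strict inequality). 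Since $\diag(X,X,2I_m) \succ 0$ whenever $X \succ 0$, this is the precise discrete-time analogue of the inequality $W_\mathrm{c}(X,\M) \succeq \xi\, \diag(X,I_m)$ appearing in the proof of \cref{thm:xi_cont}.

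From this point the interval structure follows exactly as in \cref{thm:xi_cont}. If $\T_\xi$ is passive with certificate $X \succ 0$, then for any $\tilde\xi < \xi$ we have $W_\mathrm{d}(X,\M) \succeq \xi\,\diag(X,X,2I_m) \succ \tilde\xi\,\diag(X,X,2I_m)$, so $\T_{\tilde\xi}$ is strictly passive; and because strict definiteness persists under small perturbations of $\xi$, the strictly passive parameters form an open, downward-closed set. Consequently this set is an interval $(-\infty,\Xi)$, the passive parameters form $(-\infty,\Xi]$, the single boundary point $\Xi$ is passive but not strictly passive, and every $\xi \in (\Xi,1)$ is non-passive, which gives the classification and the equality $\sup\{\xi:\T_\xi \text{ strictly passive}\} = \max\{\xi:\T_\xi \text{ passive}\} = \Xi$ claimed in \eqref{eq:sup_disc}.

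The hard part will be boundedness. That $\Xi > -\infty$ is routine: evaluating the reformulated inequality at $X = I_n$ shows that its diagonal blocks dominate as $\xi \to -\infty$, so the matrix is positive definite and $\T_\xi$ is strictly passive for all sufficiently negative $\xi$. Ruling out $\Xi = 1$ is the genuine obstacle and is exactly where minimality is indispensable, as it was in \cref{thm:xi_cont}. Unlike the continuous-time case, asymptotic stability alone does not force loss of passivity before the endpoint---for instance, a nilpotent $A$ gives $\rho(A_\xi) = 0 < 1$ for every $\xi < 1$, so no upper bound can be read off from $A$ alone. The mechanism is instead the blow-up of $(1-\xi)^{-1}$: as $\xi \to 1^-$ the $(1,1)$ and $(2,2)$ blocks $(1-\xi)X$ of the reformulated matrix collapse, and feasibility of the inequality degenerates in a manner that must be excluded using the full controllability and observability of $\M$ rather than a spectral property of $A$. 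I would carry out this estimate following, and defer the remaining technical details to, the corresponding development in \cite{MehV20a}.
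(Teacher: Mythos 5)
Your proposal is correct and takes essentially the same approach the paper intends: the paper omits this proof precisely because it mirrors that of \cref{thm:xi_cont}, namely that minimality is preserved because $1-\xi \neq 0$ merely rescales the spectral variable in the rank conditions, and that (strict) passivity of $\T_\xi$ is equivalent via \cref{thm:char_disc} to $W_\mathrm{d}(X,\M) \succeq \xi\,\diag(X,X,2I_m)$ (respectively $\succ$) for some $X \succ 0$, from which the monotone interval structure follows and the boundedness details are deferred to \cite{MehV20a}, exactly as you do. Your key identity $(1-\xi)\,W_\mathrm{d}(X,\M_\xi) = W_\mathrm{d}(X,\M) - \xi\,\diag(X,X,2I_m)$ checks out and is the same relation the paper uses implicitly with $X = 2I_n$ to justify the lower bound \eqref{eq:xilb_disc}.
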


By~\cref{def:pass_disc}, $\M_\xi$ is strictly passive if and only if
$\Phi_\xi(z) \succ 0$ holds over the entire unit circle and $\rho(A_\xi) < 1$.
Obtaining the value of $\rho(A_\xi)$ to check asymptotic stability 
can be done by computing an outermost eigenvalue of $A_\xi$ via, 
\eg \texttt{eig} or \texttt{eigs} in \matlab.
Checking the discrete-time positive definiteness condition is a little more subtle
than it is in the continuous-time case.
For $\xi,\omega \in \R$, consider 
\beq
	\label{eq:gamma_disc}
	\gamma(\xi,\omega) \coloneqq \lambda_\mathrm{min} (\Phi_\xi(\eiw)) 
	\quad \text{and} \quad
	\begin{aligned}
		\gamma_\xi(\omega) &\coloneqq \gamma(\xi,\omega)  \ \ \text{where $\xi \in \R$ is fixed}, \\
		\gamma_\omega(\xi) &\coloneqq \gamma(\xi,\omega)  \ \ \text{where $\omega \in \R$ is fixed},
	\end{aligned}
\eeq
where $\Phi_\xi$ is defined in \eqref{eq:lti_xi_disc}.
Clearly $\Phi_\xi(\eiw) \succ 0$ is equivalent to \mbox{$\gamma_\xi(\omega) > 0$},
and~$\gamma_\xi$ is continuous if $\rho(A_\xi) < 1$,
as then $\Phi_\xi$ cannot have any poles on the unit circle.
Hence, if~$\rho(A_\xi) < 1$ and 
$\Phi_\xi(\eul^{\imath \tilde \omega}) \succ 0$ for some \mbox{$\tilde \omega \in (-\pi,\pi]$}, 
then $\Phi_\xi(\eiw) \succ 0$ for all~$\omega \in (-\pi,\pi]$
if and only if $\det \Phi_\xi(\eiw)$ has no zeros.
Thus, $\T_\xi$ is strictly passive if and only if 
the following conditions all hold:
\begin{enumerate}
	\item[(D1)] $\rho(A_\xi) < 1$ (asymptotic stability of $A_\xi = \tfrac{A}{1 - \xi}$),
      	\item[(D2)] $\Phi_\xi(\eul^{\imath \tilde\omega}) \succ 0$ (positive definiteness at a unimodular point, say, $\eul^{\imath \tilde\omega} = 1$),
	\item[(D3)] $\det \Phi_\xi(\eiw) \neq 0$ for all $\omega \in (-\pi,\pi]$ 
		(implying positive definiteness on the entire unit circle provided that (D1) and (D2) also hold).
\end{enumerate}
In contrast to its continuous-time analogue (C2), note that (D2) does not require 
that \mbox{$D_\xi\ct + D_\xi$} be positive definite (or even invertible).

A bracket containing the discrete-time $\Xi$ is as follows.
Again using the relation between the linear matrix inequalities of $\M$ and $\M_\xi$, with $X= 2I_n$,
we can choose 
\begin{equation}
	\label{eq:xilb_disc}
	\Xilb \coloneqq \tfrac{1}{2} \lambda_{\min}W_\mathrm{d}(2I_n,\M)
\end{equation}	
as a lower bound on $\Xi$, 
since it follows that 
\[ 
	(1-\Xilb)W_\mathrm{d}(2I_n,\M_{\Xilb})=W_\mathrm{d}(2I_n,\M)- 2 \Xilb I_{2n+m} \succeq 0,
\]
holds and so by~\cref{thm:char_disc,thm:suboptimal}, we have that $\T_{\Xilb}$ is passive. 
Meanwhile,
\begin{equation}
	\label{eq:xiub_disc}
	\Xiub \coloneqq 1-\rho(A)
\end{equation}
is an upper bound, since obviously $\rho(A_\xi) < 1$ if $\xi < \Xiub$.  

Given $\xi \in [\Xilb,\Xiub)$, (D1) must always hold, so to verify strict passivity of $T_\xi$ 
we need to check that (D2) and (D3) also both hold.
Checking (D2) is simple. 
If~$\lambda_\mathrm{min}(\Phi_\xi(\eul^{\imath \tilde \omega})) \leq 0$ for any $\tilde \omega \in \R$, 
then $T_\xi$ is not strictly passive,
and there is no need to check (D3).
Otherwise, since $\rho(A_\xi) < 1$, if $\lambda_\mathrm{min}(\Phi_\xi(\eul^{\imath \tilde \omega})) > 0$,
we have that $\T_\xi$ is strictly passive if and only if (D3) holds, 
which can be checked via the following result\footnote{The 
generalized eigenvalue problem given by the matrices in~\eqref{eq:eig_extended_disc}
is denoted $\varGamma(\xi,\omega)$ in~\cite[p.~1263]{MehV20a}, but note that
its bottom right block, $D\ct + D - \xi I_m$, contains a typo; 
it should be \mbox{$D\ct + D - 2\xi I_m$}, which we denote $\widetilde D_\xi$ in~\cref{thm:zeros_disc}.
}
(see, \eg \cite{Fas02,Xu06}).

\begin{theorem}
\label{thm:zeros_disc}
Let $\xi \in (-\infty,1)$, $\M_\xi$ and $\Phi_\xi$ be as defined in \eqref{eq:lti_xi_disc},
and $z \in \C$ be any nonzero point such that $z \not \in \Lambda(A_\xi)$
and  $z^{-1} \not \in \Lambda(A_\xi\ct)$, which is equivalent to the former when $|z| = 1$.
Then $\det \Phi_\xi(z) = 0$ if and only if $\det (M_\xi - z N_\xi) = 0$,
where~$\widetilde D_\xi \coloneqq D\ct + D - 2\xi I_m$ and the regular pencil
$M_\xi - \lambda N_\xi$ is defined by
\begin{equation} 
	\label{eq:eig_extended_disc}
	M_\xi \coloneqq
	\begin{bmatrix} 
		0 & A & B \\ 
		(\xi - 1) I_n & 0 & 0 \\
		B^\mathsf{H} & C & \widetilde D_\xi
	\end{bmatrix}
	\ \ \text{and} \ \
 	N_\xi \coloneqq
	\begin{bmatrix} 
		0 & (1 - \xi) I_n & 0 \\ 
		-A\ct & 0 & -C\ct \\
		0 & 0 & 0 
	\end{bmatrix}.
\end{equation}
Furthermore,  if $\widetilde D_\xi$ is nonsingular, 
then $\det \Phi_\xi(z) = 0$ if and only if $\det (S_\xi - z T_\xi) = 0$,
where the symplectic pencil $S_\xi - \lambda T_\xi$ is defined by
\begin{equation} 
	\label{eq:eig_disc}
	S_\xi \coloneqq
	\begin{bmatrix} 
		(\xi - 1) I_n & 0 \\
		-B\widetilde D_\xi^{-1} B\ct & A - B \widetilde D_\xi^{-1} C 
	\end{bmatrix}
	\ \ \text{and} \ \
 	T_\xi \coloneqq
	\begin{bmatrix} 
		(B \widetilde D_\xi^{-1} C - A)\ct 	& C\ct \widetilde D_\xi^{-1} C \\
		0 							& (1 - \xi) I_n \\
	\end{bmatrix}.
\end{equation}
\end{theorem}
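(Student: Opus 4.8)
The plan is to mimic the continuous-time argument of \cref{thm:zeros_cont}, applying the Schur identity of determinants twice: once with respect to a leading $2n\times 2n$ block to get the first equivalence, and once with respect to the trailing $m\times m$ block to get the second. First I would expand $\Phi_\xi(z)$ using the explicit form of $\M_\xi$ in \eqref{eq:lti_xi_disc}. Since $\xi$ is real, a short computation gives
\[
  (1-\xi)\Phi_\xi(z) = \widetilde D_\xi + C\bigl((1-\xi)zI_n - A\bigr)^{-1}B + B\ct\bigl((1-\xi)z^{-1}I_n - A\ct\bigr)^{-1}C\ct,
\]
which is exactly the quantity I will try to recover as a Schur complement.

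For the first equivalence, I would write out $M_\xi - zN_\xi$ from \eqref{eq:eig_extended_disc} explicitly and observe that its leading $2n\times 2n$ block is block anti-diagonal,
\[
  P = \begin{bmatrix} 0 & A - z(1-\xi)I_n \\ zA\ct - (1-\xi)I_n & 0 \end{bmatrix},
\]
with off-diagonal blocks $X = A - z(1-\xi)I_n$ and $Y = zA\ct - (1-\xi)I_n$, coupling column $\begin{bsmallmatrix} B \\ zC\ct\end{bsmallmatrix}$, coupling row $\begin{bmatrix} B\ct & C\end{bmatrix}$, and corner $\widetilde D_\xi$. Using $P^{-1} = \begin{bsmallmatrix} 0 & Y^{-1} \\ X^{-1} & 0\end{bsmallmatrix}$ together with the factorizations $X = -\bigl((1-\xi)zI_n - A\bigr)$ and $Y = -z\bigl((1-\xi)z^{-1}I_n - A\ct\bigr)$, I would verify that the Schur complement of $P$ equals precisely $(1-\xi)\Phi_\xi(z)$. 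The Schur identity then gives $\det(M_\xi - zN_\xi) = \det(P)\,(1-\xi)^m\det\Phi_\xi(z)$. The same factorizations show that $P$ is nonsingular precisely under the hypotheses $z\notin\Lambda(A_\xi)$ and $z^{-1}\notin\Lambda(A_\xi\ct)$ (here I use $z\neq 0$ when factoring $Y$), and since $1-\xi\neq 0$, the first equivalence follows; regularity of the pencil follows from this determinant factorization together with the regularity of $\Phi_\xi$.

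For the second equivalence, assuming $\widetilde D_\xi$ nonsingular, I would instead form the Schur complement of $M_\xi - zN_\xi$ with respect to its trailing block $\widetilde D_\xi$, obtaining a $2n\times 2n$ matrix $G$. A direct block-by-block comparison should then show that $S_\xi - zT_\xi$ from \eqref{eq:eig_disc} equals $\begin{bsmallmatrix} 0 & I_n \\ I_n & 0\end{bsmallmatrix}G$, that is, $G$ with its two block rows interchanged (the Hermitian identity $\widetilde D_\xi\ct = \widetilde D_\xi$ is what makes the $(B\widetilde D_\xi^{-1}C - A)\ct$ block match). Consequently $\det(S_\xi - zT_\xi) = (-1)^n\det(G)$, while the Schur identity gives $\det(M_\xi - zN_\xi) = \det(\widetilde D_\xi)\det(G)$; combining these with the first part yields that $\det(S_\xi - zT_\xi) = 0$ if and only if $\det\Phi_\xi(z) = 0$ under all the stated hypotheses.

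The main obstacle I anticipate is the bookkeeping in identifying $S_\xi - zT_\xi$ with the Schur complement $G$: tracking the scalar factors $z$ and $1-\xi$, the signs produced by the block anti-diagonal inverse and by the row swap, and especially the conjugate transposes, since the $\widetilde D_\xi$-couplings are asymmetric ($B$ against $zC\ct$) and the symplectic pencil hides an $A\ct$ inside the transposed top-left block of $T_\xi$. The other place requiring care is confirming that the nonsingularity condition on $Y$ translates exactly into $z^{-1}\notin\Lambda(A_\xi\ct)$, rather than into a condition off by a factor of $1-\xi$.
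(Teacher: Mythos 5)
Your proposal is correct and follows essentially the same route as the paper's proof: apply the Schur determinant identity to the leading $2n \times 2n$ anti-block-diagonal block of $M_\xi - zN_\xi$ to get $\det(M_\xi - zN_\xi) = \det(P)\det\bigl((1-\xi)\Phi_\xi(z)\bigr)$ (with nonsingularity of $P$ equivalent to $z \notin \Lambda(A_\xi)$ and $z^{-1} \notin \Lambda(A_\xi\ct)$, and pencil regularity from regularity of $\Phi_\xi$), then apply it again to the trailing block $\widetilde D_\xi$ and left-multiply the Schur complement by $\begin{bsmallmatrix} 0 & I_n \\ I_n & 0 \end{bsmallmatrix}$ to recover $S_\xi - zT_\xi$, exactly as the paper does. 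The only item the paper includes that you omit is the (routine) verification that the resulting pencil is indeed symplectic, \ie $S_\xi\ct J S_\xi = T_\xi\ct J T_\xi$ with $J = \begin{bsmallmatrix} 0 & I_n \\ -I_n & 0 \end{bsmallmatrix}$, which justifies the name used in the statement.
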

\begin{proof} 
Writing
\[
	M_\xi - z N_\xi \coloneqq   
	\begin{bmatrix} 
		0 & A + (\xi - 1)z I_n & B \\ 
		 zA\ct + (\xi - 1)I_n  & 0 & zC\ct \\
		B^\mathsf{H} & C &  \widetilde D_\xi
	\end{bmatrix},
\]
and using the Schur identity of determinants with respect to the leading $2n\times 2n$ block,
which by assumption is nonsingular,
we obtain that 
\[ \det ( M_\xi - z N_\xi)=
	\det
	\begin{bmatrix} 
		0 & A + (\xi - 1)z I_n \\ 
		zA\ct + (\xi - 1)I_n  & 0 
	\end{bmatrix}
	\det ( (1 - \xi) \Phi_\xi(z)).
\] 
As $z \not \in \Lambda(A_\xi)$ and $z^{-1} \not \in \Lambda(A_\xi\ct)$,
the first equivalence involving $M_\xi - \lambda N_\xi$ holds, and since~$\Phi_\xi$ is regular,
this matrix pencil must be a regular one.
To obtain the second equivalence, 
we again apply the Schur identity of determinants, now 
with respect to the trailing $m\times m$ block, which is possible by our additional assumption that $\widetilde D_\xi$ is nonsingular.
It then follows that $\det ( M_\xi - z N_\xi)$ is equal to 
\[
	\det \widetilde D_\xi 
	 \det \left(
	\begin{bmatrix} 
		0 & A + (\xi - 1)z I_n \\ 
		zA\ct + (\xi - 1)I_n  & 0 
	\end{bmatrix}
	- 
	\begin{bmatrix} 
		B \\ z C\ct
	\end{bmatrix}
	\widetilde D_\xi^{-1}
	\begin{bmatrix} 
		B\ct & C
	\end{bmatrix}
	\right),
\]
and so clearly $\det ( M_\xi - z N_\xi) = 0$ if and only if the second determinant above is zero.
Multiplying the matrix inside this second determinant by $\begin{bsmallmatrix} 0 & I_n \\ I_n & 0 \end{bsmallmatrix}$ from the left 
and rearranging terms yields $S_\xi - z T_\xi$.  This matrix pencil 
is easily verified as symplectic, i.e., 
for \mbox{$J \coloneqq \begin{bsmallmatrix} 0 & I_n \\ -I_n & 0 \end{bsmallmatrix}$},
$S_\xi\ct J S_\xi = T_\xi\ct J T_\xi$ holds.
\end{proof}

\begin{corollary}
\label{cor:zeros_disc}
Let $\Xilb$ and $\Xiub$ be as defined in~\eqref{eq:xilb_disc} and~\eqref{eq:xiub_disc}, respectively,
and let $\xi \in [\Xilb,\Xiub)$.
Then the function $\gamma_\xi : (-\pi,\pi] \to \R$ defined in~\eqref{eq:gamma_disc}
has at most~$2n$  zeros.
\end{corollary}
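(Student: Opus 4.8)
The plan is to follow the strategy behind the proof of~\cref{cor:zeros_cont}, reducing the count of zeros of~$\gamma_\xi$ to a count of unit-modular roots of a characteristic polynomial of a matrix pencil; the one genuinely new wrinkle is that the discrete-time reduction cannot be pushed down to a $2n\times 2n$ object as cleanly as in the continuous-time case. First I would note that any zero $\omega \in (-\pi,\pi]$ of $\gamma_\xi$ satisfies $\det \Phi_\xi(\eiw) = 0$, so it suffices to bound the number of $\omega$ for which $\Phi_\xi(\eiw)$ is singular. Because $\xi \in [\Xilb,\Xiub)$, condition (D1) holds, \ie $\rho(A_\xi) < 1$, so every $z = \eiw$ on the unit circle satisfies $z \notin \Lambda(A_\xi)$, and equivalently $z^{-1} \notin \Lambda(A_\xi\ct)$. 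Hence the hypotheses of~\cref{thm:zeros_disc} are met at every unit-modular point, and $\det \Phi_\xi(\eiw) = 0$ if and only if $\det(M_\xi - \eiw N_\xi) = 0$, where the pencil $M_\xi - \lambda N_\xi$ from~\eqref{eq:eig_extended_disc} is regular. In particular $p(\lambda) \coloneqq \det(M_\xi - \lambda N_\xi)$ is a nonzero polynomial.

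The key step is to bound the degree of $p$. Unlike the continuous-time setting, where (C2) forces $D_\xi\ct + D_\xi \succ 0$ and one may immediately pass to the $2n \times 2n$ Hamiltonian $H_\xi$, here the discrete-time analogue (D2) does not guarantee that $\widetilde D_\xi$ is invertible, so the symplectic reduction to $S_\xi - \lambda T_\xi$ in~\eqref{eq:eig_disc} is unavailable in general and I would instead work directly with the $(2n+m)\times(2n+m)$ pencil. The crucial structural observation is that the bottom $m$ rows of $N_\xi$ in~\eqref{eq:eig_extended_disc} are identically zero, so $\rank N_\xi \le 2n$. Since the degree in $\lambda$ of the determinant of a pencil is at most the rank of its $\lambda$-coefficient matrix---bringing $N_\xi$ to the form $\begin{bsmallmatrix} I_r & 0 \\ 0 & 0 \end{bsmallmatrix}$ with $r = \rank N_\xi$ by left/right equivalence, one sees that $\lambda$ enters only $r$ of the diagonal entries---we conclude $\deg p \le \rank N_\xi \le 2n$.

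Finally, $p$ is a nonzero polynomial of degree at most $2n$ and hence has at most $2n$ distinct roots. Since $\omega \mapsto \eiw$ is injective on $(-\pi,\pi]$, distinct zeros of $\gamma_\xi$ yield distinct roots of $p$, and therefore $\gamma_\xi$ has at most $2n$ zeros. The main obstacle is precisely the degree-versus-size bookkeeping in the middle step: the pencil has size $2n+m$, yet the count we want is $2n$, and without exploiting the fact that $N_\xi$ loses its last $m$ rows one would only obtain the weaker bound $2n+m$. Everything else is a direct transcription of the continuous-time argument.
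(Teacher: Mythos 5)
Your proof is correct and follows essentially the same route as the paper's: invoke (D1) to apply \cref{thm:zeros_disc}, pass to the regular pencil $M_\xi - \lambda N_\xi$, and bound the number of unimodular eigenvalues via $\rank N_\xi \leq 2n$ (the paper's proof ends with exactly this rank observation, which you merely flesh out with the degree-of-determinant argument). Your remark that the reduction to the $2n \times 2n$ symplectic pencil is unavailable without invertibility of $\widetilde D_\xi$ correctly identifies why the paper, too, works with the full $(2n+m) \times (2n+m)$ pencil here.
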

\begin{proof}
If $\gamma_\xi(\omega) = 0$, then $\det \Phi_\xi(\eiw) = 0$.
As $\xi \in [\Xilb,\Xiub)$, \mbox{$\rho(A_\xi) < 1$} holds, and so 
 the assumptions of~\cref{thm:zeros_disc} are met.
Hence, $\det \Phi_\xi(\eiw) = 0$ if and only if $\det (M_\xi - \eiw N_\xi)$.
The proof is completed by noting that $\rank N_\xi \leq 2n$.
\end{proof}

Using~\cref{thm:zeros_disc}, Mehrmann and Van Dooren 
proposed a bisection method to compute discrete-time $\Xi$,
and via the following result, 
a discrete-time analogue of their improved procedure we described in~\cref{sec:cont_mvd}; 
for more details, see~\cite[section~7]{MehV20a}.

\begin{theorem} 
\label{thm:root_min_disc}
Let $\gamma : \R \times (-\pi,\pi] \to \R$ and $\gamma_\xi : (-\pi,\pi] \to \R$ be as defined in~\eqref{eq:gamma_disc},
and~$\Xilb$ and~$\Xiub$ be as defined in~\eqref{eq:xilb_disc} and~\eqref{eq:xiub_disc}, respectively.
Then $\gamma$ is continuous on the domain $[\Xilb,\Xiub) \times (-\pi,\pi]$
and  $\gamma_\xi$ has the following properties:
\begin{enumerate}[label=(\roman*),font=\normalfont]
\item if $\xi\in[\Xilb,\Xi)$, then $\min_{\omega \in (-\pi,\pi]} \gamma_\xi(\omega) > 0$, 
\item if $\xi=\Xi$, then $\min_{\omega \in (-\pi,\pi]} \gamma_\xi(\omega) = 0$ 
\item if $\xi \in (\Xi,\Xiub)$ with $\Xi < \Xiub$, then $\gamma_\xi(\omega) < 0$ $\forall \omega \in \mathcal{S} \subseteq (-\pi,\pi]$,
	where $\mathcal{S}$ consists of a finite number of non-overlapping open intervals.
\end{enumerate}
\end{theorem}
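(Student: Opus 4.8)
The plan is to follow the same structure as the proof of its continuous-time counterpart~\cref{thm:root_min_cont}: the bulk of the statement is the discrete-time classification already established in~\cite{MehV20a}, so I would reduce the work to the single new ingredient, the finiteness of the intervals forming $\mathcal{S}$ in~(iii), which I expect to obtain directly from~\cref{cor:zeros_disc}. Concretely, I would prove continuity of $\gamma$, then the three sign statements, handling~(i)--(ii) via the passivity dichotomy of~\cref{thm:suboptimal} and~\cref{def:pass_disc}, and finally~(iii) via the zero count.

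First I would establish continuity. For $\xi \in [\Xilb,\Xiub)$, \eqref{eq:xiub_disc} gives condition~(D1), i.e.\ $\rho(A_\xi) < 1$, so $A_\xi$ has no unimodular eigenvalues and $\Phi_\xi(\eiw)$ has no poles on the unit circle. Hence $\Phi_\xi(\eiw)$ is a Hermitian matrix depending continuously on $(\xi,\omega)$ on the stated domain, and since $\lambda_{\min}$ is a continuous function of the entries of a Hermitian matrix, $\gamma(\xi,\omega) = \lambda_{\min}(\Phi_\xi(\eiw))$ is continuous on $[\Xilb,\Xiub) \times (-\pi,\pi]$.

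For~(i) and~(ii), I would invoke~\cref{thm:suboptimal}. If $\xi \in [\Xilb,\Xi)$, then $\T_\xi$ is strictly passive, so $\Phi_\xi(\eiw) \succ 0$ for all $\omega$, which by~\eqref{eq:gamma_disc} means $\gamma_\xi(\omega) > 0$ everywhere; treating $\gamma_\xi$ as a continuous function on the compact unit circle (it depends only on $\eiw$), its minimum is attained and hence strictly positive, giving~(i). If $\xi = \Xi$, then $\T_\xi$ is passive but not strictly passive, so $\Phi_\xi(\eiw) \succeq 0$ for all $\omega$ while strict definiteness fails somewhere, i.e.\ $\gamma_\xi \geq 0$ with a zero, so its minimum is exactly $0$, giving~(ii).

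For~(iii), if $\xi \in (\Xi,\Xiub)$ then $\T_\xi$ is non-passive, so $\gamma_\xi(\omega) < 0$ for some $\omega$, and by continuity the set $\mathcal{S} = \{\omega : \gamma_\xi(\omega) < 0\}$ is relatively open; viewing $\gamma_\xi$ on the circle, $\mathcal{S}$ is a disjoint union of open arcs whose endpoints are zeros of $\gamma_\xi$. The one step that is genuinely this paper's own is finiteness, and it is where I expect the only real obstacle to lie: by~\cref{cor:zeros_disc}, $\gamma_\xi$ has at most $2n$ zeros on $(-\pi,\pi]$, so $\mathcal{S}$ comprises at most finitely many intervals. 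The remaining subtlety is the periodic, half-open nature of the domain, which I would handle uniformly by regarding $\gamma_\xi$ as a function on the circle; note that this is exactly where the continuous-time proof instead required a separate boundedness argument via~\cref{cor:zeros_cont}, whereas here boundedness of the intervals is automatic.
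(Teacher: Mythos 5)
Your proposal is correct and follows essentially the same route as the paper, whose proof simply defers (i) and (ii) to \cite{MehV20a} and obtains (iii) from the continuity of $\gamma_\xi$ together with the at-most-$2n$ zero count of \cref{cor:zeros_disc}; your extra details (continuity via $\rho(A_\xi)<1$ on $[\Xilb,\Xiub)$, the circle viewpoint for the wrap-around interval, and the observation that boundedness is automatic here unlike in \cref{thm:root_min_cont}) are faithful elaborations rather than a different argument.
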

\begin{proof}
Statements (i) and (ii) follow from \cite{MehV20a}, while
(iii) follows from the facts that~$\gamma_\xi$ is continuous,
and by~\cref{cor:zeros_disc}, it can have at most $2n$ zeros.
\end{proof}

\begin{remark}
\label{rem:disc_diff}
For any $\xi\in(\Xi, \Xiub)$, 
the continuous-time function $\gamma_\xi$ always has at least two
zero-crossings,
but note that the discrete-time version of~$\gamma_\xi$ may not have any zeros; \ie
$\max_{\omega \in (-\pi,\pi]} \gamma_\xi(\omega) < 0$ can hold.  
This is why it is necessary to check that both (D2) and (D3) hold at each estimate $\xi$ encountered
when computing discrete-time $\Xi$, but in the continuous-time case, 
only (C3) needs to be checked at each estimate.
Note that the descriptions of the discrete-time algorithms in~\cite[section~7]{MehV20a} do 
not make this important distinction clear.
\end{remark}

\subsection{An HEC-based algorithm for computing discrete-time $\Xi$}
\label{sec:hec_alg_disc}
By~\cref{thm:root_min_disc}, $\Xi$ defined in~\eqref{eq:sup_disc} can be computed via this root-min problem:
\beq
	\label{eq:root_min_disc}
	\text{Determine the}~\xi \in \Done: \qquad 
	f(\xi) = \min_{\omega \in \Dtwo} \gamma(\xi,\omega) = 0,
\eeq
where now $\xi \in \Done = [\Xilb,\Xiub]$, $\Dtwo = (-\pi,\pi]$ is obviously compact, 
$\gamma$ is defined in~\eqref{eq:gamma_disc}, and $\gamma_\xi$ is bounded below.
Our continuous-time HEC-based algorithm and results from~\cref{sec:hec_alg_cont}
extend to the discrete-time setting and work similarly, 
so for brevity, we only focus on the key points and differences.

\begin{algfloat}[t]
\begin{algorithm}[H]
\caption{HEC-based algorithm for discrete-time $\Xi$}
\label{alg:xi_disc}
\begin{algorithmic}[1]
	\setcounter{ALC@unique}{0}
	\REQUIRE{ 
		$\M$, $\omega_0 \in (-\pi,\pi]$, $\tau \in (0,1)$, $\Xilb$ \eqref{eq:xilb_disc}, and $\Xiub$ \eqref{eq:xiub_disc}
	}
	\ENSURE{ 
		$\xi$ such that $| \Xi - \xi | \leq \tau |\Xi|$ for discrete-time $\Xi$ for $\M$
		\\ \quad
	}
	
	\STATE $\xi \gets \Xiub - \tau |\Xiub|$ 
	\IF { $\xi \leq \Xilb $ }
		\RETURN
	\ENDIF
	\STATE \texttt{find\_negative} $\gets \gamma_\xi(\omega_0) \geq 0 $  \COMMENT {a boolean variable}
	\WHILE { true }
		\IF { \texttt{find\_negative} }
			\IF { $\gamma_\xi(0) < 0$ }
				\label{line:xid_d2}
				\STATE $\omega_0 \gets 0$ \COMMENT {\emph{(D2)} does not hold}
				\label{line:xid_change_omega}
			\ELSE
				\STATE $\Omega \gets \{ \omega \in (-\pi,\pi] : \det ( M_\xi - \eiw N_\xi ) = 0 \}$
				\STATE $\Omega \gets \{ \omega \in \Omega : \gamma_\xi(\omega) = 0 \}$
				\STATE $\Omega \gets \Omega \cup \{\min \Omega + 2\pi\}$
				\label{line:xid_wraparound}
				\IF { $\exists \omega_1, \omega_2 \in \Omega$ s.t. $\omega_1 < \omega_2$ and $\gamma_\xi(w) < 0 \ \forall \omega \in 				(\omega_1,\omega_2)$ }
				\STATE $\omega_0 \gets 0.5(\omega_1 + \omega_2)$ \COMMENT {\emph{(D2)} and \emph{(D3)} do not hold}
				\ELSE
				\RETURN  \COMMENT {$\gamma_\xi(\omega) \geq 0 \ \forall\omega \in (-\pi,\pi]$ 
						and $\xi \approx \Xi$ to tolerance}
				\ENDIF 
			\ENDIF
		\ENDIF
		\STATE \COMMENT {$\gamma_\xi(\omega_0) < 0$ and $\Xi \in [\Xilb,\xi)$ so run HEC with this initial data}
		\STATE $(\tilde \xi,\tilde \omega) \gets$ a pseudoroot of \eqref{eq:root_min_disc} obtained by HEC with $\Xi \leq \tilde \xi < \xi$
		\STATE $\xi \gets \tilde \xi - \tau |\tilde \xi| $ 
		\STATE \texttt{find\_negative} $\gets$ \texttt{true}
	\ENDWHILE
\end{algorithmic}
\end{algorithm}
\vspace{-0.4cm}
\algnote{See~\cref{rem:not_zero,alg:xi_cont} for more details
on tolerances and symmetry.
In~\cref{line:xid_wraparound}, $\Omega \cup \{\min \Omega + 2\pi \}$ is used so that the 
``wrap-around" interval, \ie $[\max \Omega, \min \Omega + 2\pi]$ is not missed.
}
\end{algfloat}

Pseudocode for our new algorithm for discrete-time $\Xi$ is given in~\cref{alg:xi_disc}.
Per~\cref{rem:disc_diff}, the need to check that both (D2) and (D3) hold on each iteration
means that the pseudocode is a bit more complicated than 
for continuous-time $\Xi$.  As such, one might conclude that the problem of computing $\Xi$ 
is trickier in the discrete-time case; however, as we explain in the numerical results, 
it seems that the exact opposite is true, due to a numerical issue that only arises in the continuous-time case.
To implement HEC for~\cref{alg:xi_disc},
we make use of the first and second derivatives of~$\gamma_\xi$ and $\gamma_\omega$ defined in~\eqref{eq:gamma_disc}.
To that end, we provide the discrete-time analogues of
the matrix derivatives given in~\eqref{eq:T_derivs_cont}, as the remaining computations are 
readily apparent.  
Letting $Z_k \coloneqq C((1 - \xi)\eul^{\imath \omega}I_n - A)^{-k}B$, we have that
\begin{subequations}
\begin{align}
	\dxi \Td	&= \tfrac{\Td + \eiw  Z_2 - I_m} {1 - \xi}, & 
	\dxi[2] \Td	&= \tfrac{2}{1 - \xi} \left(\eiww Z_3 + \dxiTd \right), \\
	\dw \Td	&= -\imath \eiw Z_2,  &
	\dw[2] \Td	&= \eiw Z_2  - 2(1 - \xi) \eiww Z_3. 
\end{align}
\end{subequations}
In~\cref{alg:xi_disc}, the costs to run HEC and compute zeros of~$\gamma_\xi$ 
are the same as in the continuous-time setting discussed in~\cref{sec:hec_alg_cont}.
\cref{thm:cont_quad} also extends, and so
 under mild assumptions that generally hold in practice, 
\cref{alg:xi_disc} converges quadratically to pseudoroots
of~\eqref{eq:root_min_disc}.
Relatedly, Mehrmann and Van Dooren's improved algorithm~\cite[section~7]{MehV20a}
for discrete-time $\Xi$ also converges at least superlinearly.

\section{Numerical experiments}
\label{sec:experiments}
We implemented the continuous- and discrete-time versions of our new HEC-based method
and the improved midpoint-based iteration of Mehrmann and Van Dooren.  
In this section, for brevity, we use HEC to refer to former (\cref{alg:xi_cont,alg:xi_disc}) 
and MP (for midpoint) to refer to the latter.  All codes were implemented with relative tolerances and set to compute~$\Xi$
to 14~digits.
Experiments were done using \matlab\ R2021a on a 2020 MacBook Pro with a
quad-core Intel Core i5 1038NG7 CPU and 16~GB of RAM running macOS 10.15.7.
Code and data to reproduce all experiments is included in the supplementary material.

\subsection{Implementation details}
We first discuss implementing~\cref{alg:hec}.  
The expansion phase was implemented using \texttt{fmincon}, 
while the contraction phase was implemented using a our own 
Halley-bisection root-finding code; first and second derivative information is used in both.
Due to rounding errors, it may be that contraction phase sometimes 
computes an approximate root $\hat \eps_k$ of~$g_{x_k}$ such that $g_{x_k}(\hat \eps_k) < 0$,
instead of~$g_{x_k}(\hat \eps_k) \geq 0$, which is required at every iteration (for a root-max problem).
However, if this occurs, it suffices to just perturb the computed root by a small multiple 
of the Halley step to correct the sign; a more complicated workaround
involving shifting the root problems is suggested in~\cite[section~7]{MitO16} and 
\cite[Appendix~A]{GugGMetal17}, but we do not recommend that.
\cref{alg:hec} is terminated at an approximate pseudoroot once both $\eps_k$ and $x_k$ 
are no longer changing significantly with respect to their respective previous values; 
this condition is checked twice per iteration, after the contraction phase and after the expansion phase.
Since in the context of computing~$\Xi$, the expansion phases can be solved quickly, we 
did not use early contraction.

For simplicity, we used \texttt{eig} for all eigenvalue problems, 
though it is advisable to use structure-preserving solvers for numerical robustness; 
\eg see~\cite{BenBMX99b,BenBMetal02,KreSW09,Xu06}.
To compute zeros of~$\gamma_\xi(\omega)$, we used the pencils given by the matrices 
in~\eqref{eq:eig_extended_cont} and \eqref{eq:eig_extended_disc} and respectively
identified their real and unimodular eigenvalues using a tolerance.\footnote{If 
$A$, $B$, $C$, and $D$ are all real, then \texttt{eig} returns real 
eigenvalues of~\eqref{eq:eig_extended_cont}
without any rounding error in their imaginary parts; otherwise,
zero imaginary parts may be nonzero numerically.}
Note that if $\gamma_\xi$ has a minimizer or maximizer $\hat \omega$ such
that $\gamma_\xi(\hat \omega) = 0$ (or approximately equal), then this corresponds
to a (nearly) multiple eigenvalue (with multiplicity at least two) 
of the pencil given by~\eqref{eq:eig_extended_cont} or \eqref{eq:eig_extended_disc}, as appropriate.
This always happens as any of the methods approach $\Xi$, and it is generally also true at computed pseudoroots 
and at~$\omega =0$ when the problems have symmetry.
Due to rounding errors, such eigenvalues, even when computed via a structure-preserving solver, 
may not be detected as (close to) real or unimodular.  If this happens, a zero of $\gamma_\xi$ will be missed,
which in turn can cause any of the algorithms to stagnate.  
Fortunately, a robust fix is easy: if $(\tilde \xi, \tilde \omega)$ is the most recent computed pseudoroot, 
simply explicitly add $\tilde \omega$ as a zero of~$\gamma_{\xi}$; 
a similar fix is also necessary for MP.  For more details, see~\cite[pp.~371--373]{BurLO03},
where this fix was proposed in the context of computing the pseudospectral abscissa.

For continuous-time $\Xi$,
there is an additional numerical difficulty when computing the zeros of $\gamma_\xi$  when $\xi \approx \Xiub$.
Although these zeros must be finite, they still may be arbitrarily far away from the origin,
and so there may be large errors in the imaginary parts of 
the corresponding computed real eigenvalues of $M_\xi - \lambda N$.
Mehrmann and Van Dooren recommended using a tolerance so that the first estimate~$\xi$ 
tested was sufficiently far away from $\Xiub$ to help avoid such problems.  
However, we have observed that even a relatively large perturbation 
may still be insufficient to avoid failure of MP.
Our MP code implementing their method uses \mbox{$\xi_0 = \Xiub - |\Xiub| 10^{-4}$}, but 
only small perturbations are done for subsequent estimates in order to obtain the desired 14-digit accuracy;
of course, if~$\Xi \approx \Xiub$, high accuracy may not be possible with MP.
In contrast, our HEC-based method is much less susceptible to this issue, 
since even if only one root of~$\gamma_\xi$ is detected, 
it generally can still be used to start~\cref{alg:hec}.  
Even if this root is a stationary point, a small perturbation to the left or right 
generally yields a point for starting~\cref{alg:hec}.  
In general, structure-preserving eigensolvers can be used 
or one can increase the allowed amount of rounding error in the imaginary part of an eigenvalue
in proportion with the magnitude of the eigenvalue.
 
Finally, in~\cref{line:xid_d2} of~\cref{alg:xi_disc} when checking (D2),
instead of always looking at the sign of~$\gamma_\xi(0)$, 
after the first pseudoroot has been computed
we instead test if~\mbox{$\gamma_\xi(\tilde \omega + \tfrac{1}{2}\pi) < 0$}, and if so
set $\omega_0 \gets \tilde \omega + \tfrac{1}{2}\pi$ in~\cref{line:xid_change_omega}.
The reason is because if the previous pseudoroot has $\tilde \omega = 0$, 
$\gamma_\xi(\tilde \omega) < 0$ almost always holds due to rounding error even though 
it should be exactly zero.  Shifting by, \eg $\tfrac{1}{2}\pi$, ensures that (D2) is checked at a new point;
note that shifting by $\pi$ or $2\pi$ would not ensure this.

\subsection{Experiments}
\begin{table}[t]
\centering
\small
\caption{MP and HEC compared on two continuous-time (cont.) and one discrete-time (disc.)
examples.  For Random, MP was tested in two configurations: MP-fail with~\mbox{$\xi_0 = \Xiub - |\Xiub|10^{-5}$}
and MP with \mbox{$\xi_0 = \Xiub - |\Xiub|10^{-4}$}.
The number of iterations is shown in the ``iters." column; the average number of 
iterations of~\cref{alg:hec} is also given in parentheses for HEC.
The number of eigenvalue problems solved is shown under the \mbox{``\# \texttt{eig} (order, type)"} columns,
separated into the number of order $2n + m$ matrix pencils \mbox{``($2n + m$, P)''}
and the number of order $m$ matrices \mbox{``($m$, M)"}.
The overall running time in seconds is given under ``time (sec.)", while the computed 
estimates for $\Xi$ is given in the rightmost column.
}
\begin{tabular}{l | c | SS | S  | c} 
\toprule
\multicolumn{1}{c}{} & 
\multicolumn{1}{c}{} &
\multicolumn{2}{c}{\# \texttt{eig} (order, type)} &
\multicolumn{1}{c}{} &
\multicolumn{1}{c}{} \\
\cmidrule(lr){3-4}
\multicolumn{1}{c}{Alg.} & 
\multicolumn{1}{c}{iters.} & 
\multicolumn{1}{c}{($2n+m$, P)} & 
\multicolumn{1}{c}{($m$, M)} & 
\multicolumn{1}{c}{time (sec.)} & 
\multicolumn{1}{c}{$\Xi$ estimate} \\
\midrule
\multicolumn{6}{c}{Random ($n = 200$, $m = 10$, cont.) --- $[\Xilb, \Xiub] = [-25.56407,-10.90965]$} \\ 
\midrule
MP-fail      &        1 &  1 &   1 &  0.440 & $-10.9097612001839$ \\ 
MP           &       14 & 27 &  49 &  8.892 & $-14.4073741346323$ \\ 
HEC          &   2(5.0) &  2 &  78 &  0.909 & $-14.4073741346323$ \\ 
\midrule
\multicolumn{6}{c}{RLC    ($n = 200$, $m = 1$, cont.) --- $[\Xilb, \Xiub] = [-32.1267,2.022606]$} \\ 
\midrule
MP           &        4 &  8 &  18 &  2.360 & $0.562483988863916$ \\ 
HEC          &   1(4.0) &  2 &  41 &  0.767 & $0.562483988863891$ \\ 
\midrule
\multicolumn{6}{c}{ISS    ($n = 228$, $m = 3$, disc.) --- $[\Xilb, \Xiub] = [-3.007437,3.117278 \times 10^{-6}]$} \\ 
\midrule
MP           &       15 & 29 & 550 &  8.490 & $-9.37320364701040 \times 10^{-5}$ \\ 
HEC          &   2(4.0) &  1 &  97 &  0.374 & $-9.37320364699013 \times 10^{-5}$ \\ 
\bottomrule
\end{tabular} 
\label{tbl:comp}
\end{table}
We begin with a randomly generated continuous-time example with complex matrices (denoted Random) to illustrate (i) 
when our method encounters at least two pseudoroots before converging (see \cref{fig:hec_rand}) and 
(ii) the aforementioned difficulty of computing 
zeros of~$\gamma_\xi$ when~\mbox{$\xi \approx \Xiub$} (see \cref{fig:mp_fail}).  
In \cref{tbl:comp}, we see that MP is about ten times slower than HEC.
Although HEC required more computations of $\lambda_\mathrm{min} (\Phi_\xi(\eiw))$,
it only needed to solve two of the large eigenvalue problems involving $M_\xi - \lambda N$.
Meanwhile, MP required 27 solves with the pencils and took 14 iterations to converge.
HEC converged to~$\Xi$ at its second pseudoroot, and \cref{alg:hec} on
average took 5.0 iterations to converge to a pseudoroot.

Our second continuous-time example is the electric RLC circuit model used in~\cite{morBenGV20}.
We refer to \cref{fig:hec_rlc} and \cref{tbl:comp} for the complete performance details,
but note that HEC was over three times faster than MP for this RLC example, 
with both methods converging faster and with less work than on the random example.

To compare the discrete-time methods, we used the ISS model from the 
SLICOT benchmark examples.\footnote{Available at \url{http://slicot.org/20-site/126-benchmark-examples-for-model-reduction}.}  Since ISS is a continuous-time model, we converted it to a minimal discrete-time one
by calling \texttt{c2d} using a sampling time of $0.001$ followed by \texttt{minreal}.
In \cref{fig:hec_rlc} and \cref{tbl:comp}, we see that HEC was almost 23 times faster than MP,
again due to the great disparity in the number of large generalized eigenvalue problems solved.
In fact, for ISS, HEC also solved far fewer smaller standard eigenvalues problems as well.
From \cref{tbl:comp} and \cref{fig:mp_iss}, we also see that MP did not quite compute 
$\Xi$ to the requested 14-digit accuracy, while HEC apparently did.  
This slight inaccuracy is the result of MP solving root problems via solving eigenvalue problems,
but such errors can be larger; see the caption of~\cref{fig:mp_iss} for more details.

\begin{figure}[t]
	\begin{subfigure}{\textwidth}
		\centering
		\includegraphics[scale=\threescale,trim=0mm 0mm 0mm 0mm,clip]{./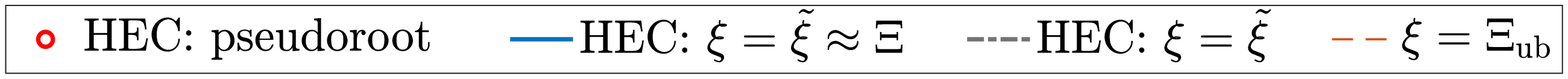}
	\end{subfigure} \\
	\begin{subfigure}{.329\textwidth}
		\centering
		\includegraphics[scale=\threescale,trim=0mm 0mm 0mm 0mm,clip]{./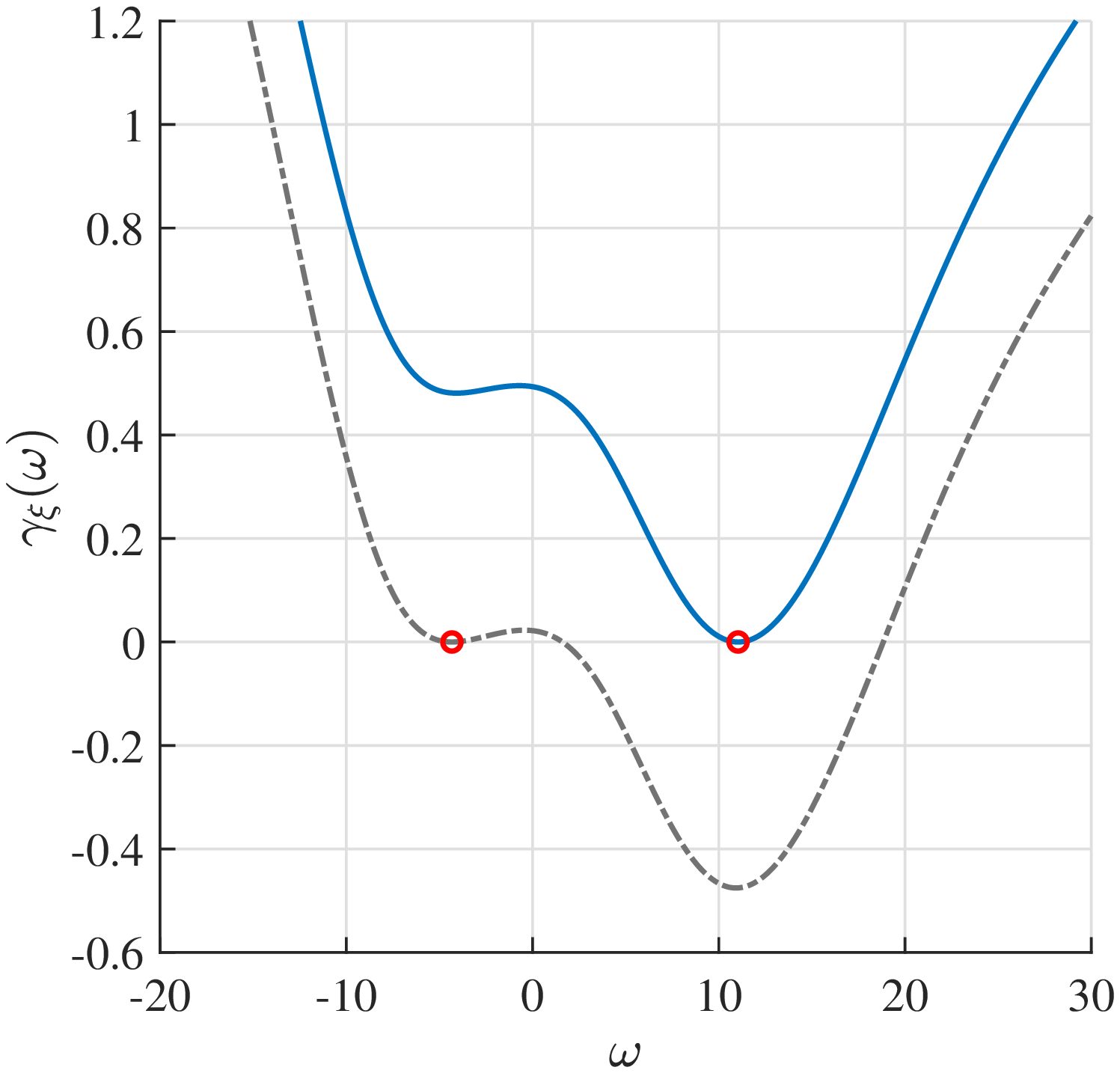}
		\caption{Random (cont.)}
		\label{fig:hec_rand}
	\end{subfigure}
	\begin{subfigure}{.329\textwidth}
		\centering
		\includegraphics[scale=\threescale,trim=0mm 0mm 0mm 0mm,clip]{./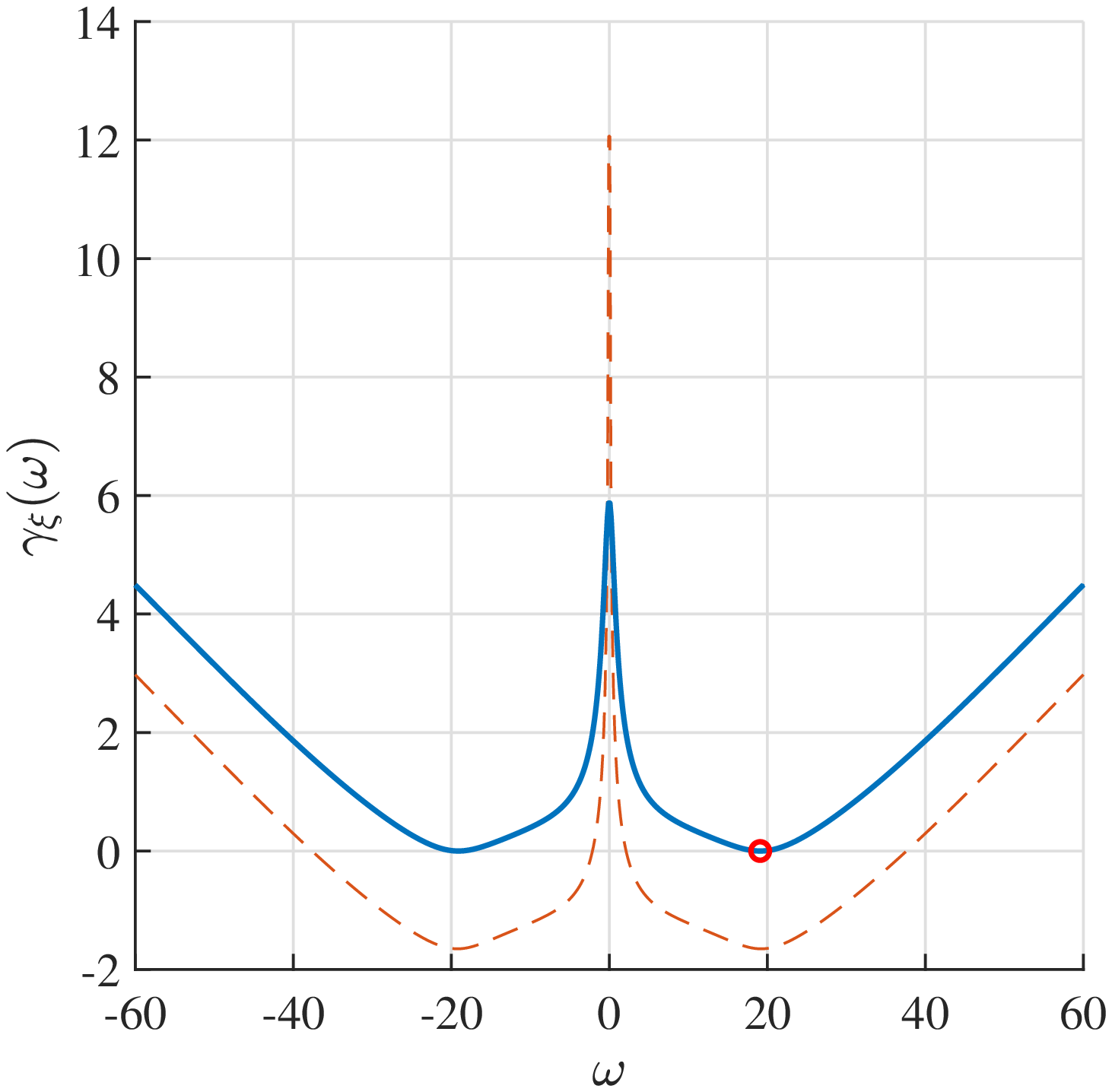}
		\caption{RLC (cont.)}
		\label{fig:hec_rlc}
	\end{subfigure}
	\begin{subfigure}{.329\textwidth}
		\centering
		\includegraphics[scale=\threescale,trim=0mm 0mm 0mm 0mm,clip]{./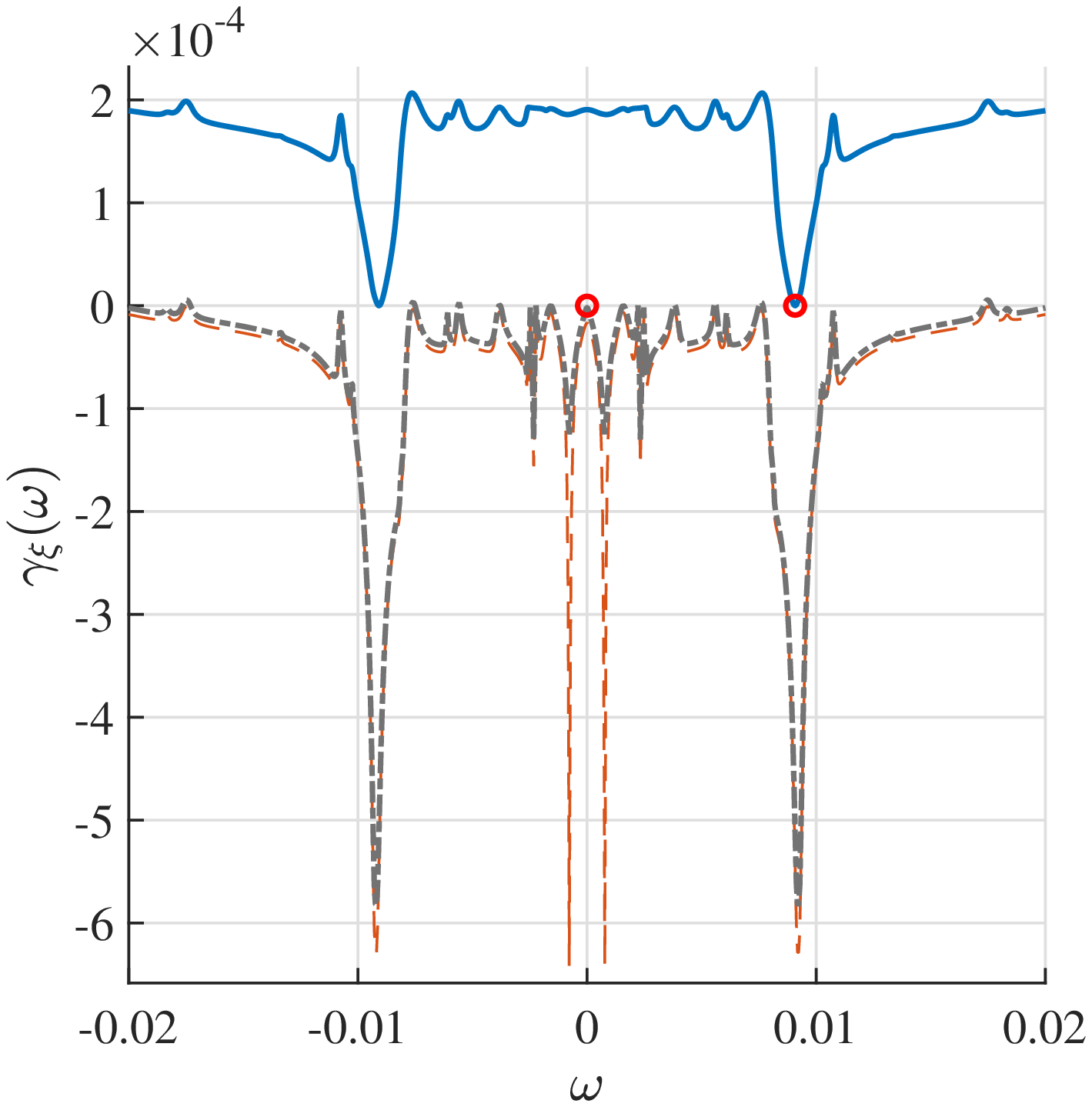}
		\caption{ISS (disc.)}
		\label{fig:hec_iss}
	\end{subfigure}
\caption{The pseudoroots $(\tilde \xi, \tilde \omega)$ and corresponding estimates $\tilde \xi$ for $\Xi$ computed
by HEC until convergence with $\tilde \xi \approx \Xi$.
}
\label{fig:hec_ex}
\end{figure}

While we have established that MP converges at least superlinearly, 
an examination of its iterates (not shown) seems to indicate that it too may converge quadratically
like HEC.
However, as demonstrated by Random and ISS, where MP respectively required 14 and 15 iterations,
MP can incur many iterations before it gets near its faster convergence regime.  
The key problem on these examples is that MP chooses the largest interval where~$\gamma_{\xi}$
is negative to determine how to reduce estimate $\xi$.
But this can be a particularly bad strategy if~$\gamma_{\xi}$ has a zero very far away from the origin, 
as is the case for both Random and ISS when~$\xi \approx \Xiub$.
While one could consider altering this strategy to improve performance, 
such an MP variant would still be slower than HEC and 
also still have the aforementioned numerical issues.

\begin{figure}[t]
	\begin{subfigure}{.49\textwidth}
		\centering
		\includegraphics[scale=\twoscale,trim=0mm 0mm 0mm 0mm,clip]{./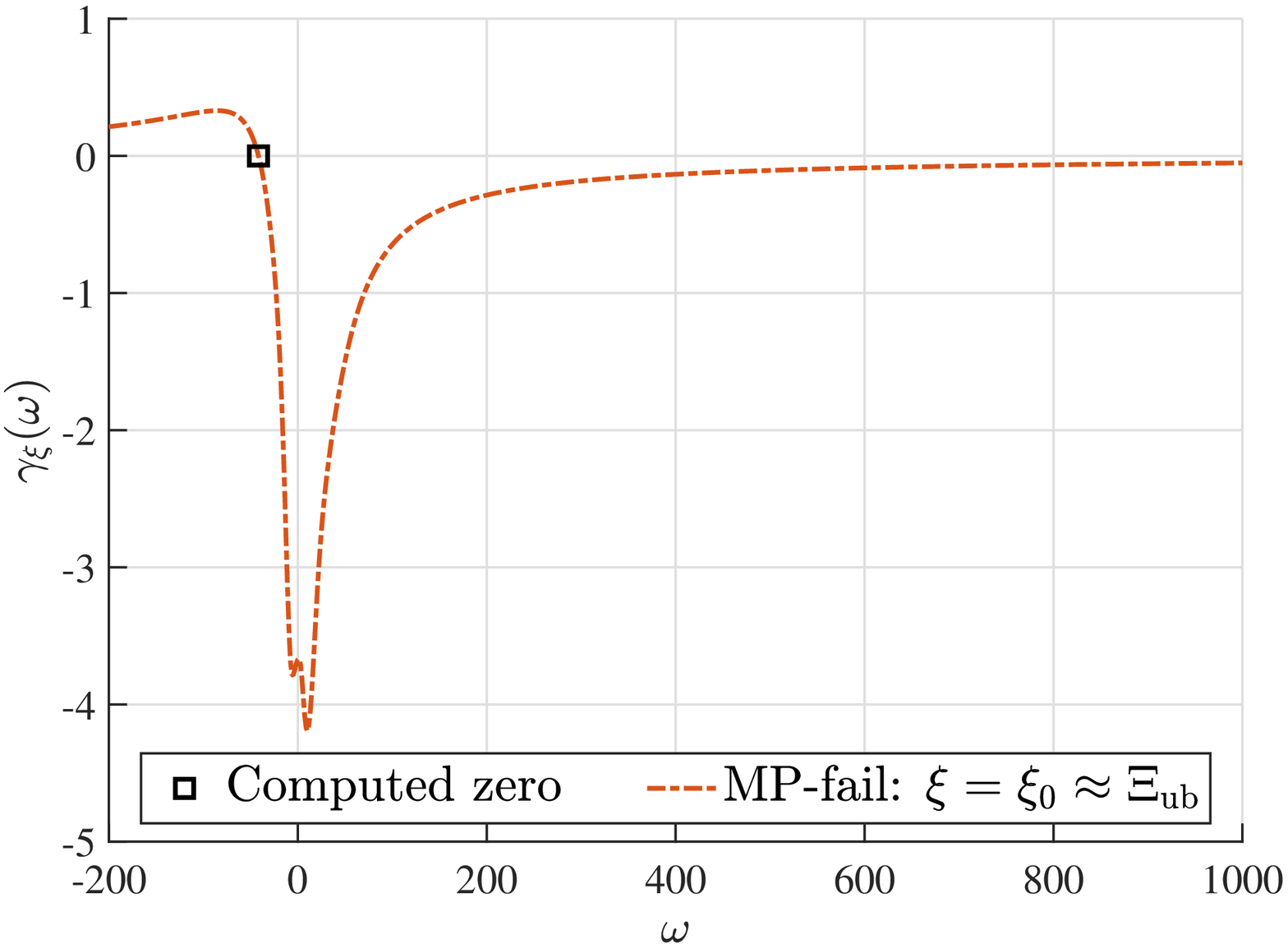}
		\caption{Random (cont.)}
		\label{fig:mp_fail}
	\end{subfigure}
	\begin{subfigure}{.49\textwidth}
		\centering
		\includegraphics[scale=\twoscale,trim=0mm 0mm 0mm 0mm,clip]{./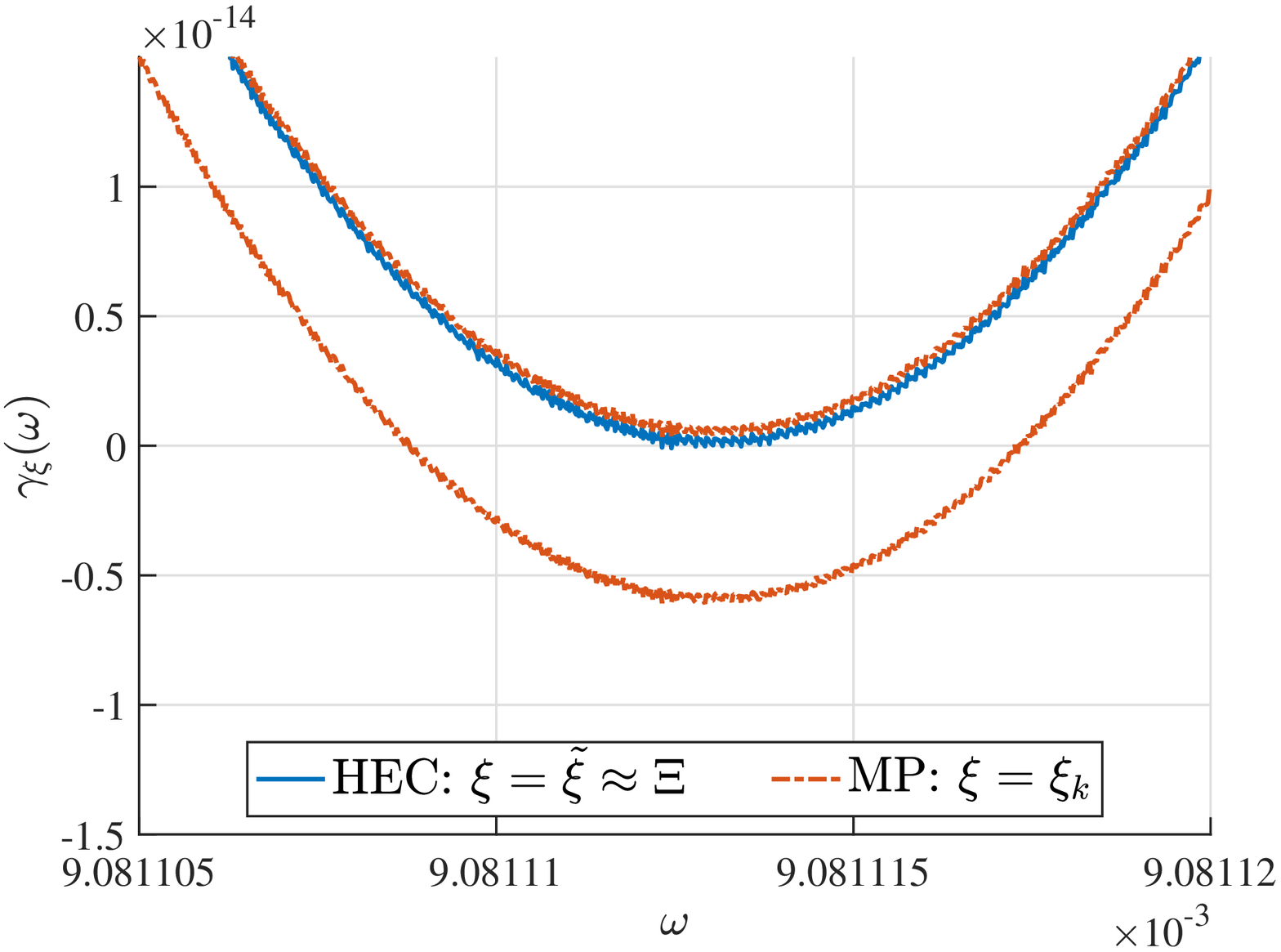}
		\caption{ISS (disc.)}
		\label{fig:mp_iss}
	\end{subfigure}
\caption{On the left, MP was initialized with $\xi_0 =  \Xiub - |\Xiub| 10^{-5}$.
In this case, $\gamma_{\xi_0}$ has two zeros at approximately $-41.6$ and 
$460600.9$, but the latter is not detected due to rounding errors when computing 
the eigenvalues of~$M_{\xi_0} - \lambda N$.
Consequently, MP erroneously terminates at $\xi_0$ with no digits of accuracy
but does converge properly when initialized with $\xi_0 =  \Xiub - |\Xiub| 10^{-4}$.
On the right, we see that eigenvalue computations used in MP 
to compute the smallest roots of $\gamma_{\omega}$ incurs
more rounding errors than our HEC-based approach.  See~\cite[section~9.2]{BenM19}
for an example where half of the precision can be lost when solving
root problems using these eigenvalue techniques.
}
\label{fig:mp_issues}
\end{figure}

\section{Conclusion}
\label{sec:conclusion}
By generalizing the HEC algorithm, we have presented faster and more numerically robust algorithms to compute $\Xi$,
 the extremal real value for which a given parametric linear time-invariant system is passive, 
a problem which is linked to maximizing the passivity radius.
Our new methods outperform the existing algorithms of Mehrmann and Van Dooren,
and for large-scale problems, when using sparse eigenvalue solvers, 
HEC can be used by itself to efficiently estimate~$\Xi$,
which the earlier methods cannot do.
We hope that our generalization of HEC, its convergence guarantees, and 
identification of root-max problems will also help facilitate new fast and robust numerical methods
for other quantities, for small- and large-scale problems.

While we have established local rates of convergence for our new methods (at least quadratic) and the earlier methods of Mehrmann and Van Dooren~\cite{MehV20,MehV20a} (at least superlinear),
one thing that remains unaddressed is the question of global convergence, \ie do these methods 
have unconditional convergence to $\Xi$? 
We believe that they do, but a potential 
wrinkle towards proving this is that the measure of the set of intervals where $\gamma_\xi$ is negative 
(see~\cref{cor:zeros_cont,cor:zeros_disc})
is not always decreasing. 
Consequently, computing~$\Xi$ is a fundamentally different problem than 
maximizing a one-variable function using the level-set technique
of Boyd and Balakrishnan~\cite{BoyB90}.

\footnotesize
\bibliographystyle{alpha} 
\bibliography{csc,mor,software}
\end{document}